\newif\ifcomm
    \newcounter{commentNumberI}
     \newcommand{\Isaac}[1]{\addtocounter{commentNumberI}{1}{{({\color{blue} {(\arabic{commentNumberI}.)} Isaac: #1})}}} 
      \newcommand{\Rami}[1]{\addtocounter{commentNumberI}{1}{{({\color{orange} {(\arabic{commentNumberI}.)} Rami: #1})}}} 
       \newcommand{\Ariel}[1]{\addtocounter{commentNumberI}{1}{{({\color{brown} {(\arabic{commentNumberI}.)} Ariel: #1})}}} 
       \newcommand{\Gal}[1]{\addtocounter{commentNumberI}{1}{{({\color{red} {(\arabic{commentNumberI}.)} Gal: #1})}}} 
      \newcommand{\Shay}[1]{\addtocounter{commentNumberI}{1}{{({\color{green} {(\arabic{commentNumberI}.)} Shay: #1})}}} 
    \newcommand{\A}[1]{\textbf{[#1]}}       
    \newcommand{\del}[1]{\textcolor{red}{X: \textbf{\sout{#1}}}} 
    \newcommand{\A}[1]{}
    \newcommand{\Isaac}[1]{}
    \newcommand{\Gal}[1]{}
    \newcommand{\Ariel}[1]{}
	\newcommand{\Shay}[1]{}
    \newcommand{\Rami}[1]{}
    \newcommand{\del}[1]{} 
\newcommand{\remove}[1]{}
    \def\compactify{\itemsep=0pt \topsep=0pt \partopsep=0pt \parsep=0pt}
      \let\latexusecounter=\usecounter
\newtheorem{theorem}{Theorem}[section]
\newtheorem{lemma}[theorem]{Lemma}
\newtheorem{proposition}[theorem]{Proposition}
\newtheorem{remark}[theorem]{Remark}
\newtheorem{key observation}[theorem]{Key observation}
\newcommand{\be}{\begin{equation}}
\newcommand{\ee}{\end{equation}}
\newcommand\MyIncludeGraphics[2][]{
    \IfFileExists{#2}{%
        \includegraphics[#1]{#2}%
    }{%
        \missingfigure[figwidth=7.0cm]{Missing #2}%
    }%
}%
\newcommand{\T}[1]{\smallskip\noindent\textbf{#1}} 
\newcommand{\para}[1]{\left( #1 \right)}        
\newcommand{\eps}{\varepsilon}
\newcommand{\E}{\mathbb{E}}  
\newcommand{\p}[1]{\mathbb{P} \para{#1}}  
\newcommand{\pp}[2]{\mathbb{P}_{#2} \para{#1}}  
\newcommand{\newVar}[2]{\newcommand{#1}{\ensuremath{#2}\xspace}}
  \newVar{\server}{S}
  \newVar{\client}{C}
  \newVar{\rclient}{R_c}
  \newVar{\rserver}{R_s}
\providecommand{\ie}{\emph{i.e.,} }
\providecommand{\iid}{\emph{i.i.d.}\xspace}
\providecommand{\keywords}[1]
{
  \small	
  \textbf{\textit{Keywords---}} #1
}
\title{On the Persistent-Idle Load Distribution Policy Under Batch Arrivals and Random Service Capacity}
\author{Rami~Atar$^{(2)}$, Isaac~Keslassy$^{(2)}$, Gal~Mendelson$^{(1)}$, Ariel~Orda$^{(2)}$, Shay~Vargaftik$^{(3)}$ \smallskip\smallskip\\ \small 1. Graduate School of Business, Stanford University \\ \small 2. Viterbi Faculty of Electrical Engineering, Technion\\ \small 3. VMware Research\\}
\begin{document}
\date{}
\maketitle


\begin{abstract}
The \textit{Persistent-Idle} (PI) load distribution policy was recently introduced as an appealing alternative to current low-communication load balancing techniques. In PI, servers only update the dispatcher when they become idle, and the dispatcher always sends jobs to the last server that reported being idle. PI is unique in that it does not seek to push the server queue lengths towards equalization greedily. Rather, it aggressively pulls the servers away from starvation. As a result, PI's analysis requires different tools than other load balancing approaches. So far, PI was proven to achieve the stability region for Bernoulli arrivals and deterministic and constant service capacities.

Our main contribution is proving that PI achieves the stability region in a model with batch arrivals and random service capacities. Proving this result requires developing tighter bounds on quantities of interest and proving the fulfillment of a generalized version of the state-dependent drift criteria previously considered. We also present PI-Split, which allows batches to be split among idle servers and prove its stability. Finally, we conduct simulations comparing PI and PI-Split to other load balancing policies. Simulation results indicate that both achieve excellent performance for the model under consideration for a wide range of parameters.

\end{abstract}


\keywords{Parallel Server Model; Load Balancing; Persistent-Idle; Stability; Lyapunov Drift}




\section{Introduction}
\label{sec_introduction}
The Persistent Idle (PI) load distribution policy was recently introduced in \cite{PI1} as an appealing reduced-state load balancing scheme. It is designed for the parallel server model, in which jobs arrive at a dispatcher and are routed to one of $n$, possibly heterogeneous, servers. In PI, a job is routed to an idle server if there is one or to the last server a job was sent to, otherwise. Thus the dispatcher only uses idleness information to make its routing decisions making the communication cost at most a single message per job.

Despite this low communication overhead, PI was shown to achieve the stability region when servers are heterogeneous; namely, with possibly different processing rates.
In their considered model the following assumption were made: at most one job may arrive at a time slot; each job's workload depends on the server it is assigned to and equals the number of time slots required to process it on that server; each server completes one unit of work at each time slot.
Thus at most one departure can occur in each server at each time slot.

In this paper, we study PI in a considerably different, well-established model with batch arrivals
and random service capacities (e.g. \cite{ying2017power}).
An additional variant of the policy is considered, called PI-Split, in which the dispatcher may split jobs belonging to the same batch to several servers.
Our main results are that PI and PI-Split achieve the stability region in this model.

Proving that PI achieves the stability region in this model presents a significant challenge. In particular, the proof in \cite{PI1} relies on sampling the Markov chain describing the state of the system at the random times at which new servers become idle, and proving a state-dependent drift criteria is fulfilled by a carefully chosen Lyapunov function. This proof heavily relies on the fact that, given the current state, the next sampling time is deterministic. 
This is no longer the case in the model we consider.

Our main contribution is proving the fulfillment of a generalized version of a state-dependent drift criteria (we discuss this at length in Section \ref{subsubsec:Overview}) and proving it is sufficient for stability. 
Our proof includes developing tighter bounds on quantities of interest and a method to relate the current queue length state to the time interval's duration until the next sampling time.

Finally, we present simulation results comparing PI and PI-Split to other load-distribution policies that were considered in \cite{PI1}. 
Simulation results indicate that PI has a comparable performance to Join-the-Shortest-Queue (JSQ) up to high loads, and favorable performance compared to other well-known reduced-state load balancing schemes, i.e., Join-the-Idle-Queue (JIQ) \cite{lu2011join}, Power-of-choice (JSQ(2)) \cite{mitzenmacher2001power} and Power-of-Memory (JSQ(1,1)) \cite{shah2002use}.

The remainder of this paper is organized as follows. Section \ref{sec:system models} contains the system model and the definition of the PI and PI-Split policies. Section \ref{sec:main results} contains the stability results for PI and PI-Split. Section \ref{sec:simulations} contains the simulation results, comparing PI and PI-Split to other load-distribution policies.

\section{Model}\label{sec:system models}
\label{sec:liq}

\subsection{Persistent Idle}\label{subsec: PI system model}

We consider a parallel-server system that evolves in discrete time $t \in \{0,1,2\ldots\}$ with a single dispatcher and $n$ heterogeneous work-conserving servers. Each of the $n$ servers has an unbounded FIFO queue holding pending jobs. 

\T{Order of events.} The order of events during each time slot is as follows. (1) New jobs may arrive at the dispatcher; (2) the dispatcher immediately decides on a server to which the jobs will be assigned to and routes them accordingly; (3) servers provide service. In particular, servers can process jobs that have just arrived; (4)  
communication between the dispatcher and the servers may occur. As a result, the dispatcher may update its state (e.g. which servers have no pending jobs), on which it may base the assignment decision at the next time slot.

\T{Batch arrivals.}  The job arrival process is given by a sequence of \iid random variables (RVs) $\{a(t)\}_{t=1}^{\infty}$. Namely, at each time slot $t>0$, $a(t)$ jobs arrive as a \textit{batch} at the dispatcher. We assume $\p{a(1)=0}>0$ to avoid dealing with a periodic Markov chain. 
We denote the mean and standard deviation of $a(1)$ by $\lambda$ and $\sigma_a$, respectively.

\T{Random service capacities.} 
The service process of server $i\in\{1,\ldots,n\}:=[n]$ is given by a sequence of \iid RVs $\{s_i(t)\}_{t=1}^{\infty}$, representing the {potential} number of jobs that a server can complete in a time slot. We assume that these RVs are positive and bounded, \ie there exists $s_{max}\in \mathbbm{N}$ such that 
\begin{equation}\label{eq:s assumption}
    1 \leq s_i(t)\leq s_{max}, \quad \mbox{ for all } i\in [n] \mbox{ and } t>0.
\end{equation}
We denote the mean and standard deviation of $s_i(1)$ by $\mu_i$ and $\sigma_{s_i}$, respectively. Assumption \eqref{eq:s assumption} implies
\begin{equation}\label{eq:mu assumption}
    1 \leq \mu_i\leq s_{max}, \quad \mbox{ for all } i\in [n].
\end{equation}

\T{Queue lengths and idleness.} Let $Q_i(t)$ denote the number of pending jobs in queue $i$ at the \textit{end} of time slot $t$, \textit{after} arrival and departure. Let $Q(t)=(Q_1(t),\ldots,Q_n(t))$, and $Q=(Q(t))_{t=0}^{\infty}$. We refer to $Q(0)$ as the initial condition. Server $i$ is \textit{idle} at time slot $t$ if $Q_i(t)=0$. A server \textit{becomes} idle at time slot $t>0$ if it had jobs to process during time slot $t$ and processed all of them. For example, if $Q_i(t-1)=0$, jobs arrived to server $i$ at the beginning of time slot $t$ and it served all of them, i.e. $Q_i(t)=0$, then it became idle at time slot $t$. But, if $Q_i(t-1)=0$ and $Q_i(t)=0$ and no jobs arrived at server $i$, it did not become idle at time slot $t$. 

\T{Tokens.} A single token is associated with each server. Thus, the total number of tokens in the system is $n$. Each token can be held by either the corresponding server or the dispatcher. When a server becomes idle, it sends its token to the dispatcher. Similarly, when the dispatcher chooses an idle server as a destination for a job, it implicitly sends the token back to this server along with the job. At time slot $t=0$ the dispatcher holds the tokens corresponding to the servers that are initially idle.


\noindent \textit{Example 1}. If $Q_i(t-1)>0$, then server $i$ must hold its token at the beginning of time slot $t$.
If, in addition, $Q_i(t)=0$, server $i$ becomes idle at time slot $t$ and sends the token at the end of time slot $t$.

\noindent \textit{Example 2}. If $Q_i(t-1)=0$, the dispatcher must hold server $i$'s token at the beginning of time slot $t$. If, in addition, jobs arrive to server $i$ at the beginning of time slot $t$ then the dispatcher sends the token to server.
If server $i$ now completes processing all jobs, it sends the token
at the end of time slot $t$. Thus, in this case, the token travels from the dispatcher
and back at the same time slot.

\T{PI routing policy.}  
The routing is encoded in a process that takes values in $[n]$, denoted by $LI=(LI(t))_{t=0}^{\infty}$. If a job arrives at time $t>0$, it joins queue $LI(t)$.
We refer to the corresponding server as the \textit{Last-Idle} server. Roughly speaking, the Last-Idle server is the one that was idle most recently.
If there are multiple idle servers, the Last-Idle server is chosen by some tie-breaking rule. 
For concreteness we assume that the tie-breaking rule is that the Last-Idle server is chosen uniformly at random among the tied servers. The exact mathematical definition of the process $LI$ appears below.
Thus, the policy persistently sends the arriving jobs to the Last-Idle server until other servers become idle.

\T{Markov chain formulation.}
The precise mathematical formulation is given in terms of a Markov chain, that
we now define.
The Markov chain, denoted $\{X(t)\}=\{(Q(t),LI(t))\}$, has state space
\[
\hat{\mathcal{A}}=\mathbb{N}^n \times [n].
\]
A member $\alpha$ of the state space $\hat{\mathcal{A}}$ is of the form $$\alpha=(\alpha_1,\ldots,\alpha_n,l),$$ where $\alpha_1,\ldots,\alpha_n$ are queue lengths, and $l$ is the identity of the Last-Idle server.

Define the set of idle servers at the end of time slot $t$ by
\begin{equation}\label{eq:idle servers}
    I(t)=\{i:Q_i(t)=0\}.
\end{equation}
The queue length and Last-Idle processes satisfy the following recursion for $t>0$, 
\begin{align}\label{eq:maxflow}
\begin{cases}
Q_i(t)=[Q_i(t-1)+a(t)\mathbbm{1}_{\{LI(t)=i\}}-s_i(t)]^+ \\ 
LI(t)=\begin{cases} LI(t-1),& \text{if }\vert I(t-1)\vert=0 \,\\ 
 \xi_t,&\text{otherwise,}
\end{cases}
\end{cases}
\end{align}
where for each $t>0$, $\xi_t$ is a RV
whose conditional distribution given
\[
(X(0),X(1),\ldots,X(t-1),a(t),s_1(t),\ldots,s_n(t))
\]
is uniform on the set $I(t-1)$ on the event $\{\vert I(t-1)\vert>0\}$,
and $\xi_t=1$ on the event $\{\vert I(t-1)\vert=0\}$
(note that the value of $\xi_t$ on the latter event is immaterial; it is specified only for concreteness). It is clear from the definition that $X$ forms a Markov chain.

We define the set ${\cal{A}} \subset \hat{\cal{A}}$ as the collection of all states $\alpha\in\hat{\cal{A}}$ such that $\alpha$ can be reached
with a positive probability from any of the states of the form $(0,\ldots,0,i)$, $i\in [n]$, corresponding to an empty system.
The initial condition $X(0)=(Q(0),LI(0))$ is assumed to lie in $\cal{A}$ with probability 1.
Under this assumption, we may and will assume without loss of generality
that the state space of $X$ is ${\cal A}$.

By \eqref{eq:maxflow}, the definition of $\cal{A}$ and the fact that the process can reach $(0,\ldots,0,i)$
from $(0,\ldots,0,j)$ for any $i,j$ (thanks to the assumptions that $\p{a(1)=0}>0$ and the uniformly-at-random tie-breaking rule),
it follows that $X$ is an irreducible, aperiodic, homogeneous Markov chain on ${\cal A}$.

\T{Departures.} Let $d_i(t)$ denote the
number of jobs completed by server $i$ during time slot $t$.
Clearly, $d_i(t)\leq s_i(t)$. Moreover, $d_i(t)$ is given by 
$$d_i(t)=\min\{s_i(t),Q_i(t-1)+a(t)\mathbbm{1}_{\{LI(t)=i\}}\},$$
and the first equation in recursion \eqref{eq:maxflow} can also be written as 
\begin{equation}\label{flow}
Q_i(t)=Q_i(t-1)+a(t)\mathbbm{1}_{\{LI(t)=i\}}-d_i(t).
\end{equation}


\subsection{Persistent Idle with batch splitting}\label{subsec: PI split model}
The PI policy sends all of the jobs belonging to a batch to a single server. The batch-splitting PI policy (referred to as \textit{PI-Split}) is a variation
in which whenever there are at least two idle servers, the dispatcher is allowed to split an arriving batch into \textit{sub-batches} and send them to multiple idle servers. If there are none, the dispatcher sends the batch to the server corresponding to the value of $LI(t)$. 

Under PI-Split, the tokens are handled exactly as under PI: when a server becomes idle, it sends its token to the dispatcher and when a dispatcher sends jobs to a server (either a batch or a sub-batch) it implicitly returns the token. Thus, under PI-Split, whenever a batch is split into sub-batches, multiple tokens are returned to the corresponding servers. 

The updating rule for $LI$ remains the same as under PI: If there are idle servers, $LI$ is chosen uniformly at random. If there are none, its value remains unchanged. We add a superscript $S$ for the batch splitting case such that the queue lengths, idleness and Last-Idle processes are given by $Q^S$, $I^S$ and $LI^S$ respectively. The state of the system is now given by the process $$X^S=:(Q^S,LI^S).$$ 

The only difference between PI and PI-split is how the queue lengths are updated whenever $\vert I^S(t-1) \vert>0$. To keep the Markov chain formulation simple for this case, we assume that the dispatcher determines how to split the batch between the idle servers based only on the current queue length state and the current batch size. 

Formally, let $\{B(t)\}_{t=1}^\infty$ be an i.i.d process taking values in $\mathbbm{R}$. Let $v[i]$ denote the $i$th member of a vector $v$ in $\mathbbm{N}^n$. Define a family of deterministic functions 
\begin{equation}\label{eq:f def}
  f:\mathbbm{N}^n\times \mathbbm{N}\times \mathbbm{R}\rightarrow \mathbbm{N}^n  
\end{equation}
satisfying the following conditions:
\begin{align}\label{eq:f cond}
&\sum_{i=1}^n f(\alpha_1,\ldots,\alpha_n,a,b)[i]=a,\cr
&f(\alpha_1,\ldots,\alpha_n,a,b)[i]=0 \quad \text{ if } \min_{1\leq j \leq n}{\alpha_j}=0 \text{ and }\alpha_i>0.
\end{align}
The inputs to $f$ are the queue lengths $Q^S$, the batch size $a(t)$ and the value of the i.i.d process $B(t)$. The $i$th member of the output vector equals the number of jobs the dispatcher should send to server $i$. The first condition in \eqref{eq:f cond} states that the total number of assigned jobs is exactly the batch size. The second condition states that if there are idle servers ($\min_{1\leq j \leq n}\alpha_j=0$), then any server that is not idle should not receive any jobs. The behaviour of $f$ in the case where there are no idle servers is immaterial. 

Following the definitions above, we can now state the updating rule for $X^S$:
\begin{align}\label{eq:one recurrsion split}
\begin{cases}
Q^S_i(t)=[Q^S_i(t-1)+a_i(t)-s_i(t)]^+\\
LI^S(t)=\begin{cases} LI^S(t-1),& \text{if }\vert I^S(t-1)\vert=0 \,\\ 
 \xi^S_t,&\text{otherwise,}
\end{cases}\\
a_i(t)=\begin{cases} a(t),& \text{if } \vert I^S(t-1)\vert=0 \text{ and } i=LI^S(t)\,\\ 
f(X^S_i(t-1),a(t),B(t))[i],& \text{if } \vert I^S(t-1)\vert>0 \text{ and } i \in I^S(t-1)\,\\ 0, & \text{otherwise}
\end{cases}
\end{cases}
\end{align}

where $\xi^S_t$ is a RV distributed uniformly on the idle servers and is defined analogously to $\xi_t$ in \eqref{eq:maxflow} and the function $f$ encodes the batch splitting rule and belongs to the family of functions defined by \eqref{eq:f def} and \eqref{eq:f cond}. Thus, $X^S$ is an irreducible, aperiodic, time homogeneous Markov chain. 

\section{Main results} \label{sec:main results}

\subsection{Communication overhead} \label{subsec:Communication overhead}

We begin with a simple result regarding the communication overhead of PI and PI-Split. Both only rely on tokens passed from the servers to the dispatcher.

\begin{proposition}\label{prop:pi}
The communication overhead of PI (resp. PI-Split) consists of at most a single message per batch (resp. sub-batch).
\end{proposition}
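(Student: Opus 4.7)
The plan is to observe that in both PI and PI-Split the only genuine signaling messages are the tokens that travel from a server to the dispatcher at the instant the server becomes idle; by the model description, the reverse transfers (dispatcher $\to$ server) are piggybacked onto a job delivery and so carry no additional communication cost. With this reduction, the task becomes amortizing the server-to-dispatcher token transmissions against the arriving batches (or sub-batches).

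First, fix a server $i$ and follow its token over a time window $[0,T]$. By the token rules, the token can leave server $i$ only at a time slot in which server $i$ becomes idle, and it can return to server $i$ only when the dispatcher addresses server $i$ while $Q_i(t-1)=0$ (respectively $Q_i^S(t-1)=0$), i.e., at the start of a fresh busy period. Since the token strictly alternates between the two locations, the number of explicit server $i\to$ dispatcher messages emitted in $[0,T]$ is at most one plus the number of fresh busy periods of server $i$ that begin in $[0,T]$, the additive one accounting for the initial location of the token.

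Second, I would identify each such fresh busy period with a distinct batch (resp.\ sub-batch). Under PI, each incoming batch is sent as a whole to one server, so the beginnings of fresh busy periods inject one-to-one into the set of batches arriving in $[0,T]$. Under PI-Split, condition~\eqref{eq:f cond} ensures that when $|I^S(t-1)|>0$ the dispatcher places strictly positive sub-batches only on currently idle servers; therefore each idle server that is re-activated receives exactly one sub-batch, and the same one-to-one injection goes through with ``batch'' replaced by ``sub-batch''. Summing over $i\in[n]$, the total signaling in $[0,T]$ is bounded by the number of batches (resp.\ sub-batches) delivered in $[0,T]$ plus the additive constant $n$, which is exactly the claimed per-batch (resp.\ per-sub-batch) overhead.

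I do not expect a substantive obstacle: the only subtlety is the bookkeeping convention for what counts as a physical message (server $\to$ dispatcher token) versus what is subsumed in the job delivery (dispatcher $\to$ server token). Once that convention is fixed, the proof reduces to the short counting/amortization sketched above.
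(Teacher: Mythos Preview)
Your argument is correct, but it is more elaborate than necessary and takes a slightly different route from the paper. The paper's proof is a two-line observation: a token is transmitted only when a server becomes idle, i.e., when it completes the last job it holds; since PI never splits a batch, that last job is necessarily the last job of some batch, so each token transmission maps injectively to a batch completion (and analogously to a sub-batch completion under PI-Split). This yields the ``one message per batch'' bound directly, with no busy-period bookkeeping and no additive constant.

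Your approach instead anchors the accounting at the \emph{start} of each busy period rather than its end: you bound messages by busy-period starts (plus one per server for the initial token position) and then inject busy-period starts into the set of arriving batches/sub-batches. This is perfectly valid and in fact makes the amortization over a finite window $[0,T]$ explicit, at the cost of the extra $+n$ boundary term. The paper's mapping from token transmission to batch \emph{completion} sidesteps that boundary effect and is shorter; your version is arguably more careful about what ``per batch'' means over a finite horizon. Either way, the substance is the same alternation argument you identified.
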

\begin{proof}
A necessary condition for a transmission of a token is the completion of a job. Since PI does not split batches, a completion of a job which results in a server becoming idle must be a completion of a batch. Similarly, for PI-Split, a message can be sent only when sub-batches are completed and not all completions result in a message.
\end{proof}

\subsection{Stability results} \label{subsec:Stability results}

We introduce the following notation.
For  $\alpha \in \mathcal{A}$, we write $\mathbb{E}_\alpha[\ {\cdot} \ ]$ and 
$\pp{\ {\cdot} \ }{\alpha}$ for the conditional expectation $\mathbb{E}[\ {\cdot} \ |X(0){=}\alpha]$ and probability 
$\p{\ \cdot \ |X(0){=}\alpha}$, 
respectively.
\subsubsection{Overview}\label{subsubsec:Overview}

A useful approach to proving stability for irreducible Markov processes relies on the construction of a Lyapunov function $\cal{L}$, mapping the state space to $[0,\infty)$, that satisfies a drift condition within a single time step, of the form
\begin{equation}\label{eq:single time step drift}
\E_\alpha[{\cal{L}}(X(1))]-{\cal{L}}(\alpha)<-\eps<0,
\end{equation}
such that the drift is negative and bounded away from zero at all states $\alpha$ outside of some finite set $A\subset \cal{A}$, and a finite expectation at all states $\alpha\in A$, namely $\E_\alpha[{\cal{L}}(X(1))]<\infty$.
The existence of such a function ensures that provided the chain starts in the finite set $A$, the expected return time of the chain to $A$ is finite. This, in turn, implies several stability properties,
such as positive recurrence and convergence to a unique invariant measure (Lemma I.3.10 of \cite{asmussen2008applied}).

However, this approach does not appear to be directly applicable for the model under consideration.
As discussed at length in \cite{PI1}, the main difficulty arises when considering states where one queue length is large while the others are close to zero. It is possible that the large queue length belongs to the Last-Idle server, which receives all of the work. 

The proof in \cite{PI1} relies on sampling the underlying Markov chain at (random) times at which the identity of the Last-Idle server may change, namely, the times at which a token is available at the dispatcher and it is not the last used token. If the service capacities are deterministic (as in  \cite{PI1}),
the next such time is a function of the current state alone. Then, the calculation of the drift is performed with respect to time intervals that depend (deterministically) on the states the process traverses (as in \cite{malyshev1979ergodicity}, \cite{meyn1994state}). 

At these sampling times the queue length vector is always at the boundary of the positive orthant. The drift condition then only needs to be verified at these special states, rather than at the whole space.
Denote the set of these states by ${\cal{A}}_b$ (where $b$ is mnemonic for boundary). Because the Lyapunov function is observed at sampling times, the negative drift condition must be stated in terms of the duration of the intervals. That is, in place of \eqref{eq:single time step drift}, it is shown in \cite{PI1} that
\begin{equation}\label{eq:state dep drift}
\E_\alpha[{\cal{L}}(X(\tau))]-{\cal{L}}(\alpha)<-\eps\tau,
\end{equation}
for all but finitely many $\alpha$ in the set ${\cal{A}}_b$, where $\eps>0$ and $\tau$ denotes the next sampling time.

The random service capacities we consider in this paper further complicate the analysis. In this case, given the current state $X(0)=\alpha$, the next sampling time $\tau$ is random. Instead of \eqref{eq:state dep drift}, our method is based on showing that
\begin{equation}\label{eq:state dep drift random}
\E_\alpha[{\cal{L}}(X(\tau))]-{\cal{L}}(\alpha)<-\eps\E_\alpha[\tau],
\end{equation}
for all but finitely many $\alpha$ in the set ${\cal{A}}_b$.

We analyze the stability of PI and PI-Split in Sections \ref{subsec: stability of PI} and \ref{subsec: stability of PI split} respectively. 


\subsection{Stability of PI} \label{subsec: stability of PI}

\begin{theorem}\label{main result PI}
Assume $\lambda < \sum_{i=1}^n\mu_i$. Then: \\
(i) $X$ is positive recurrent. Consequently, \\
(ii) $X$ has a unique stationary distribution, denoted by $\pi_X$, and \\
(iii) For any initial state $\alpha\in\mathcal{A}$ and any $B \subset \mathcal{A}$, $\mathbb{P}_\alpha(X(t)\in B) \rightarrow \pi_X(B)$ as $t \rightarrow \infty$.
\end{theorem}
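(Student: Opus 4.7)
The plan is to follow the state-dependent drift approach outlined in Section \ref{subsubsec:Overview}, adapted to the random-capacity setting. I first define the sampling stopping time
$$\tau=\inf\{t>0:\ I(t-1)\setminus\{LI(0)\}\ne\emptyset\},$$
i.e., the first time at which the dispatcher holds a token of a server other than the current Last-Idle one. On the interval $[0,\tau)$ the identity of the Last-Idle server is frozen at $l:=LI(0)$, so every arriving batch is routed to server $l$ while each server $i\ne l$ drains without receiving new work. By the strong Markov property, and because $X(\tau)$ belongs to ${\cal A}_b:=\{\alpha\in{\cal A}:\min_i \alpha_i=0\}$ (the freshly idle server contributes a zero coordinate), positive recurrence of $X$ reduces to verifying condition \eqref{eq:state dep drift random} on ${\cal A}_b$ and invoking a random-$\tau$ generalization of the Malyshev--Meyn drift criterion (as in \cite{meyn1994state,malyshev1979ergodicity}); establishing that generalization is, as the introduction announces, part of the contribution.

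For the Lyapunov function I would adopt a weighted linear template in the spirit of \cite{PI1}: a function of the form ${\cal L}(\alpha)=w_l\alpha_l+\sum_{i\ne l}w_i\alpha_i$, where $l$ is the last coordinate of $\alpha$ and the strictly positive weights $w_1,\ldots,w_n$ (with a distinct weight $w_l$ attached to the current $LI$ coordinate) are tuned so that, for every possible identity $l\in[n]$, the quantity $\lambda w_l-\sum_{i\ne l} w_i \mu_i$ is strictly negative. The hypothesis $\lambda<\sum_{i=1}^n \mu_i$ is precisely what makes such a choice of weights feasible. Intuitively, on $[0,\tau]$ the Last-Idle queue grows at mean rate $\lambda-\mu_l$ while the remaining queues drain at aggregate rate $\sum_{i\ne l}\mu_i$, and the weights are balanced so that the weighted drift is negative overall.

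To evaluate the drift at $\alpha\in{\cal A}_b$ with $LI=l$, write $\tau=\min_{i\ne l}\tau_i$, where $\tau_i$ is the first time after $0$ at which server $i$ empties. Since server $i\ne l$ receives no work on $[0,\tau]$, Wald's identity applied to the i.i.d.\ sequence $\{s_i(t)\}$ yields $\E_\alpha[\tau_i]=\alpha_i/\mu_i+O(1)$ and $\mathrm{Var}_\alpha(\tau_i)=O(\alpha_i)$. For the Last-Idle coordinate, the conditional expected increment over $[0,\tau]$ equals $(\lambda-\mu_l)\E_\alpha[\tau]$ plus a reflection correction of order $\sqrt{\E_\alpha[\tau]}$ arising from the $[\,\cdot\,]^+$ in \eqref{eq:maxflow}. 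Combining these estimates gives, uniformly on the subset of ${\cal A}_b$ with some coordinate sufficiently large,
$$\E_\alpha\bigl[{\cal L}(X(\tau))\bigr]-{\cal L}(\alpha)\le \Bigl(\lambda w_l-\!\!\sum_{i\ne l,\,\alpha_i>0}\!\! w_i \mu_i\Bigr)\E_\alpha[\tau]+o\bigl(\E_\alpha[\tau]\bigr),$$
which is at most $-\eps\,\E_\alpha[\tau]$ for all but finitely many $\alpha\in{\cal A}_b$, establishing \eqref{eq:state dep drift random}.

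The main obstacle is controlling $\tau$ tightly given the two independent sources of randomness, namely the service capacities $\{s_i(t)\}$ and the batch sizes $\{a(t)\}$. Since $a(1)$ is only assumed to have a finite second moment, the $LI$ queue can experience large upward jumps, and one needs tight two-sided bounds relating $\E_\alpha[\tau]$ to $\min_{i\ne l}\alpha_i/\mu_i$ together with a bound showing that the reflection correction on the $LI$ coordinate is genuinely $o(\E_\alpha[\tau])$ rather than a constant fraction of it -- precisely what the introduction refers to as \textquotedblleft tighter bounds on quantities of interest.\textquotedblright{} A secondary technical task is to verify the uniform moment bound $\sup_{\alpha\in K}\E_\alpha[\tau]<\infty$ on compact $K\subset{\cal A}$ that is required by the random-$\tau$ drift criterion; this follows from $s_i(t)\ge 1$ and an elementary comparison with a deterministic-decrement process. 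Once \eqref{eq:state dep drift random} is in hand, parts (ii) and (iii) of the theorem are immediate consequences of positive recurrence for an irreducible, aperiodic Markov chain (e.g., Lemma I.3.10 of \cite{asmussen2008applied}).
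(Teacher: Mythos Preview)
Your high-level plan---sample the chain at the times when a fresh idle token becomes available and verify the random-$\tau$ drift condition \eqref{eq:state dep drift random} on the boundary set---is exactly the paper's strategy. The gap is the choice of Lyapunov function: a weighted \emph{linear} function cannot deliver the inequality you write down, and this is not a matter of tuning the weights.

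The failure occurs in the ``short interval, one large queue'' configuration. Take $n=3$, $\mu_1=\mu_2=\mu_3=1$, $\lambda=2.5$, and a sampled state with $Q(\tau_k)=(0,0,M)$; then $l_k$ is uniform on $\{1,2\}$, and on the event $\{l_k=1\}$ (which is in ${\cal F}^\xi_{\tau_k}$) one has $\Delta_k=1$ deterministically, since server~2 remains idle. Your error term must therefore be $o(1)$, but the one-step increment is exactly $w_1\,\E[(a-s_1)^+]-w_3\mu_3$, a fixed constant independent of $M$; server~2 contributes nothing because it starts and ends at zero. Requiring this to be $\le-\eps$ for each cyclic relabeling of the roles forces $w_3> c\,w_1$, $w_1> c\,w_2$, $w_2> c\,w_3$ with $c=\E[(a-s_1)^+]/\mu_3\ge(\lambda-\mu_1)/\mu_3=1.5>1$, a contradiction. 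The underlying issue is visible already in your own displayed drift: the coefficient is $\lambda w_l-\sum_{i\ne l,\,\alpha_i>0}w_i\mu_i$, not $\lambda w_l-\sum_{i\ne l}w_i\mu_i$; the hypothesis $\lambda<\sum_i\mu_i$ controls the latter but says nothing about the former once several coordinates vanish. There are infinitely many such boundary states, so they cannot be absorbed into the finite exceptional set.

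The paper's remedy is to take ${\cal L}(\alpha)=\sum_i\alpha_i^{1+\gamma}$ with $\gamma\in(0,1]$ chosen via \eqref{eq:gamma}. The super-linear exponent makes the one-step negative contribution from the large queue scale like $-Q_{max}^{\gamma}$ (see \eqref{eq:case 1 qmax}), which dominates the bounded positive contribution from the Last-Idle server whenever $Q_{max}$ is large---this is Case~1 of Lemma~\ref{lem: drift}. In the complementary long-interval regime (Case~2), the exponent $\gamma$ is tuned so that the leading $\Delta_k^{1+\gamma}$ coefficient is negative, and there your Wald-type reasoning and the $o(\E[\tau])$ reflection estimate are essentially what the paper carries out in \eqref{eq:case2bound}--\eqref{eq:case2bound3} and \eqref{eq: RRW bound}. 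So your long-interval analysis is on the right track; what is missing is a mechanism to handle short intervals, and that is precisely what the convex Lyapunov function provides.
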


\noindent \textit{Proof.} 
For $A\subset \cal{A}$ define the hitting time
\begin{equation}\label{eq:tau A def}
\tau^A=\inf{\{t\geq 1; \, X(t) \in A \}}.
\end{equation}
We prove that there exists a nonempty finite set $A\subset {\cal{A}}$ such that for all $\alpha_0 \in A$,
\begin{equation}\label{eq:tau}
\mathbbm{E}_{\alpha_0}[\tau^A]<\infty.
\end{equation}
By I.3.10 of \cite{asmussen2008applied}, the irreducibility of $X$ and \eqref{eq:tau} imply positive recurrence
and hence part (i) of Theorem \ref{main result PI}.
Part (ii) then follows by I.3.6 of \cite{asmussen2008applied}.
Finally, since the chain is aperiodic, part (iii) follows by the ergodic theorem for Markov chains, Theorem I.4.2 of \cite{asmussen2008applied}. It thus remains to show \eqref{eq:tau}. 
To this end, we define a sampled chain $Y$
of $X$, sampled at the times when the value of $LI$ may change.

To define the random sampling times, we first define the following filtrations. The first is the natural filtration of $X$, namely
\begin{equation}\label{eq:filtration_natural}
{\cal{F}}_t=\sigma(X(0), \ldots ,X(t))
\end{equation}
and the second also includes the RVs $\xi_1,\ldots,\xi_{t+1}$ which are used to break ties between the idle servers, namely
\begin{equation}\label{eq:filtration}
{\cal{F}}^{\xi}_t=\sigma(X(0), \ldots ,X(t),\xi_1,\ldots,\xi_{t+1}).
\end{equation}
The filtration ${\cal{F}}^{\xi}_t$ in \eqref{eq:filtration} is useful when knowing the identity of the newly used token simplifies the analysis. 

By \eqref{eq:maxflow}, the value of $LI$ can only change at time $t$ if there is an idle server which is not the Last-Idle server, corresponding to the event  
\begin{equation}\label{eq:omega}
    \Omega_t:=\{\exists i \mbox{ s.t. }Q_i(t)=0, i\neq LI(t)\}.
\end{equation}
Define the sequence of stopping times $\tau_0,\tau_1,\ldots$ by
\begin{align}\label{eq:sampling times}
    &\tau_0=\inf\{t\geq 1 : \mathbbm{1}_{\Omega_t}=1\}\cr
    &\tau_{k+1}=\inf\{t>\tau_k : \mathbbm{1}_{\Omega_t}=1\}, \quad k=0,1,2\ldots
\end{align}
Since the values of $\tau^A, \tau_0, \tau_1, \ldots$ only depend on the values of $X$, the former are stopping times with respect to both filtrations, ${\cal{F}}_t$ and ${\cal{F}}^{\xi}_t$.
By the definition of the policy, the finiteness of the initial condition and the finiteness of the arrival process, the stopping times $\tau_k$ are finite a.s. for all $k\geq 0$ (the proof follows the same argument as in Lemma 2 of \cite{PI1} and hence is omitted). Define the sampled chain 
\begin{equation}\label{eq:Y}
    Y_k=X({\tau_k}), \quad k\geq 0.
\end{equation}
Denote by ${\cal{A}}^*\subset {\cal{A}}$ the collection of states that can be reached by $Y$, namely
\begin{align*}{\cal{A}}^*=\{\alpha=(\alpha_1,\ldots,\alpha_n,l)\in{\cal{A}}: \alpha_i=0 \text{ for some } i\neq l\}.
\end{align*}
The following lemma asserts that a certain drift criterion is fulfilled by $Y$.
\begin{lemma}\label{lem: drift}
 There exist a constant $\gamma$ such that $0< \gamma\leq1$, a function ${\cal{L}}:{\cal{A}}\rightarrow \mathbbm{R}_+$ of the form
 \begin{equation}\label{eq:L function in lemma}
{\cal{L}}(\alpha)=\sum_{i=1}^n\alpha_i^{1+\gamma}, 
\end{equation}
a positive constant $\epsilon$ and a finite non-empty set $A \subset {\mathcal{A}}$, such that for all  $k\geq 0$ and $\alpha \in {\cal{A}}^*\setminus A$, the following inequality holds:
\begin{align}\label{foster inequalities}
\mathbb{E}_{\alpha}[({\cal{L}}(Y_{k+1})-{\cal{L}}(Y_{k}) )\mathbbm{1}_{\{k<\sigma\}}\mid {\cal{F}}^{\xi}_{\tau_k}]\leq-\epsilon\mathbb{E}_{\alpha}[(\tau_{k+1}-\tau_k) \mathbbm{1}_{\{k<\sigma\}}\mid {\cal{F}}^{\xi}_{\tau_k}], 
\end{align}
where 
\begin{equation}\label{eq:sigma}
    \sigma=\inf \{k \geq 0: Y_k \in A\}.
\end{equation}
\end{lemma}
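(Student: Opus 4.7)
The plan is to use the strong Markov property of $X$ at $\tau_k$ to reduce \eqref{foster inequalities} to its one-step version at $k=0$, and then verify a state-dependent drift inequality over a single sampling interval using the Lyapunov function $\mathcal{L}(\alpha)=\sum_i\alpha_i^{1+\gamma}$ for $\gamma\in(0,1]$ sufficiently small.

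For the reduction, note that $\tau_k$ is a stopping time of $\mathcal{F}^{\xi}$, that $\mathbbm{1}_{\{k<\sigma\}}$ is $\mathcal{F}^{\xi}_{\tau_k}$-measurable, and that $Y_k\in\mathcal{A}^{*}$ by definition of the sampling times. Applying the strong Markov property of $X$ yields $\mathbb{E}[\mathcal{L}(Y_{k+1})-\mathcal{L}(Y_k)\mid\mathcal{F}^{\xi}_{\tau_k}]=\mathbb{E}_{Y_k}[\mathcal{L}(X(\tau_0))]-\mathcal{L}(Y_k)$ and $\mathbb{E}[\tau_{k+1}-\tau_k\mid\mathcal{F}^{\xi}_{\tau_k}]=\mathbb{E}_{Y_k}[\tau_0]$, so it suffices to produce $\gamma,\epsilon>0$ and a finite $A\subset\mathcal{A}$ such that
\[
\mathbb{E}_\alpha[\mathcal{L}(X(\tau_0))]-\mathcal{L}(\alpha)\leq -\epsilon\,\mathbb{E}_\alpha[\tau_0]\qquad\text{for every }\alpha\in\mathcal{A}^{*}\setminus A.
\]

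For the drift itself, the structure of PI implies that on $[0,\tau_0]$ the identity $l:=LI(0)$ is frozen: all arrivals are routed to server $l$ while servers $i\neq l$ only drain. Writing $T=\tau_0$, $A_T=\sum_{t=1}^{T}a(t)$, and $D_i$ for the total service received by server $i$ on $(0,T]$, one has $Q_l(T)=\alpha_l+A_T-D_l$ and $Q_i(T)=\alpha_i-D_i\geq 0$ for $i\neq l$. The coordinatewise Taylor bound $(x+y)^{1+\gamma}-x^{1+\gamma}\leq(1+\gamma)x^{\gamma}y+C_\gamma|y|^{1+\gamma}$ then gives
\[
\mathcal{L}(X(T))-\mathcal{L}(\alpha)\leq (1+\gamma)\Big[\alpha_l^{\gamma}(A_T-D_l)-\sum_{i\neq l}\alpha_i^{\gamma}D_i\Big]+R,
\]
with $R$ controlled by the $(1+\gamma)$-th moments of $A_T$ and of the $D_i$'s. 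Taking $\mathbb{E}_\alpha$ and applying optional stopping to the centered arrival and service processes, the first-order term becomes a multiple of $\mathbb{E}_\alpha[T]$ with coefficient $\alpha_l^{\gamma}(\lambda-\mu_l)-\sum_{i\neq l}\alpha_i^{\gamma}\mu_i$, up to boundary corrections coming from servers whose queues empty before $T$.

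The hard part is the regime where $\alpha_l$ dominates the other coordinates and $\lambda>\mu_l$: the LI server then accumulates jobs, and a naive bound on $\mathbb{E}_\alpha[T]$ produces a positive contribution of order $\alpha_l^{1+\gamma}$. The key, enabled by $\alpha\in\mathcal{A}^{*}$ (so some $\alpha_j=0$ with $j\neq l$), is that the next sampling must occur quickly: when $|I(0)|\geq 2$ the update rule \eqref{eq:maxflow} forces $\tau_0=1$ deterministically, while when $|I(0)|=1$ a concentration estimate on the partial sums $\sum_t s_i(t)$ controls $\mathbb{E}_\alpha[T]$ by the smallest $\alpha_i/\mu_i$ over non-empty servers $i\neq l$, sharpening the corresponding bound from \cite{PI1} to accommodate the random service capacities. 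Combining these tighter bounds with the estimate $\mathbb{E}_\alpha[R]=O(\mathbb{E}_\alpha[T])$ reduces the claim to showing that the first-order drift coefficient is bounded above by $-\epsilon$ on $\mathcal{A}^{*}\setminus A$, which follows from the slack $\sum_i\mu_i-\lambda>0$ once $\gamma$ is taken small enough that the $\alpha^{\gamma}$-weights do not distort this balance at infinity.
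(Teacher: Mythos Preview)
There are two related gaps. First, your strong-Markov reduction is not quite right: by \eqref{eq:filtration} the filtration $\mathcal{F}^{\xi}_{\tau_k}$ contains $\xi_{\tau_k+1}$, so conditioning on it fixes the identity $l_k:=LI(\tau_k+1)$ of the \emph{next} Last-Idle server, which is strictly more information than $Y_k=X(\tau_k)$. Hence $\mathbb{E}[\,\cdot\mid\mathcal{F}^{\xi}_{\tau_k}]$ is not a function of $Y_k$ alone, and your reduced one-step inequality (an average over the tie-breaking choice) does not establish \eqref{foster inequalities} as stated. This is precisely why the paper works with $\mathcal{F}^{\xi}$ rather than $\mathcal{F}$.

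Second, and more importantly, the server receiving all arrivals on $(\tau_k,\tau_{k+1}]$ is $l_k=LI(\tau_k+1)$, not $LI(\tau_k)$. Since $Y_k\in\mathcal{A}^{*}$ forces a nonempty idle set at $\tau_k$, the update rule \eqref{eq:maxflow} gives $l_k\in I(\tau_k)$, so $Q_{l_k}(\tau_k)=0$ always. Your ``hard part'' (large $\alpha_l$ with $\lambda>\mu_l$) therefore never arises in this form, and your Taylor expansion around $\alpha_l$ yields no first-order term for server $l_k$: the entire contribution $Q_{l_k}(\tau_{k+1})^{1+\gamma}$ lands in the remainder $R$. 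But this term is of order $([\lambda-\mu_{l_k}]^+)^{1+\gamma}\Delta_k^{1+\gamma}$ for the reflected walk, so $\mathbb{E}_\alpha[R]=O(\mathbb{E}_\alpha[T])$ fails whenever $\lambda>\mu_{l_k}$, and the first-order coefficient $\alpha_l^{\gamma}(\lambda-\mu_l)-\sum_{i\neq l}\alpha_i^{\gamma}\mu_i$ is not the quantity governing the drift. The paper instead balances at order $\Delta_k^{1+\gamma}$: it bounds $\mathbb{E}[Q_{l_k}(\tau_{k+1})^{1+\gamma}\mid\mathcal{F}^{\xi}_{\tau_k}]$ via a reflected-random-walk moment estimate, controls the draining coordinates by the exact inequality $([Q_i-R_i]^+)^{1+\gamma}-Q_i^{1+\gamma}\leq -Q_i^{\gamma}(Q_i\wedge R_i)$ (no Taylor remainder), chooses $\gamma\in(0,1]$ so that $([\lambda-\mu_j]^+)^{1+\gamma}-\sum_{i\neq j}\mu_i<-2\epsilon$ for every $j$, and closes with a short/long-interval case split on $Q_{\min}$ using the two-sided bound $(Q_{\min}/s_{\max})\vee 1\leq\Delta_k\leq Q_{\min}\vee 1$.
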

The stopping time $\sigma$ counts the number of samples needed until $Y$ hits the set $A$ for the first time. Inequality \eqref{foster inequalities} states that, given the initial condition is outside of $A$, as long as $k<\sigma$, the drift of the function $\cal{L}$ applied to $Y$ is strictly negative and is proportional to the expected duration of the intervals between samples. The proof is provided below the proof of the theorem.

Now, fix an initial state $\alpha_0$ such that $X(0)=\alpha_0 \in A$. By the definition of the sampling times $\sigma,\tau_0,\tau_1,\ldots$ in \eqref{eq:sampling times}, we have
\begin{equation}\label{eq:tau_sigma}
    \tau_{\sigma}=\inf\{t\geq 1 : X(t) \in {\cal{A}}^* \cap A\},
\end{equation}
such that $X(\tau_{\sigma})=Y(\sigma)\in {\cal{A}^*}\cap A$. The value of $\tau_{\sigma}$ only depends on the values of $X$, hence $\tau_{\sigma}$ is a stopping time with respect to ${\cal{F}}_t$ and ${\cal{F}}^{\xi}_t$. While $X$ reaches $A$ at time slot $\tau_{\sigma}$, it is not necessarily the first time slot in $[1,\tau_{\sigma}]$ in which this occurs. Since $\tau^A$, defined in \eqref{eq:tau A def}, equals the number of time slots it takes $X$ to reach the set $A$ \textit{for the first time}, we must have $\tau^A\leq \tau_{\sigma}$ and therefore
\begin{equation}\label{eq:tau A less than tau sig}
 \mathbbm{E}_{\alpha_0}[\tau^A]\leq \mathbbm{E}_{\alpha_0}[\tau_{\sigma}].
\end{equation}
Our goal is now to prove that $\mathbbm{E}_{\alpha_0}[\tau_{\sigma}]$ is finite. 
We have:
\begin{align}\label{eq:first tau}
    \mathbbm{E}_{\alpha_0}[\tau_{\sigma}]
    &=\sum_{\alpha \in {\cal{A}^*}\cap A}\mathbbm{E}_{\alpha_0}[\tau_{\sigma} \mid X(\tau_0)=\alpha]\mathbbm{P}_{\alpha_0}(X(\tau_0)=\alpha)\cr
    & \quad+\sum_{\alpha \in {\cal{A}^*}\setminus A}\mathbbm{E}_{\alpha_0}[\tau_{\sigma} \mid X(\tau_0)=\alpha]\mathbbm{P}_{\alpha_0}(X(\tau_0)=\alpha).
\end{align}
Conditioned on the event $\{X(\tau_0)=\alpha\in {\cal{A}^*}\cap A\}$, by the definitions of $\tau_0$ in \eqref{eq:sampling times} and $\tau_{\sigma}$ in \eqref{eq:tau_sigma}, we have $\tau_{\sigma}=\tau_0$
and therefore
\begin{equation}\label{eq:tau A hits}
    \mathbbm{E}_{\alpha_0}[\tau_{\sigma} \mid X(\tau_0)=\alpha]=
    \mathbbm{E}_{\alpha_0}[\tau_0 \mid X(\tau_0)=\alpha], \quad \alpha\in {\cal{A}^*}\cap A.
\end{equation}

We proceed with analyzing the term $\mathbbm{E}_{\alpha_0}[\tau_{\sigma} \mid X(\tau_0)=\alpha]$ for the case $\alpha\in {\cal{A}^*}\setminus A$.  
First, we prove that
\begin{equation}\label{eq:SMP}
    \mathbbm{E}_{\alpha_0}[\tau_{\sigma} \mid {\cal{F}}_{\tau_0}]=
    \tau_0+g(X(\tau_0)),
\end{equation}
where $g(x):=\mathbbm{E}_x[\tau_{\sigma}]$. 

Since $\tau_0$ and $\tau_{\sigma}$ are stopping times with respect to ${\cal{F}}_t$ and $\tau_0 \leq \tau_{\sigma}$ a.s., by the Strong Markov Property (Theorem 5.2.5. of \cite{durrett2019probability}), we have
\begin{equation}\label{eq:SMP1}
    \mathbbm{E}_{\alpha_0}[(\tau_{\sigma}-\tau_0)\wedge r \mid {\cal{F}}_{\tau_0}]=\phi^r(X(\tau_0)),
\end{equation}
where $r$ is a positive constant and $\phi^r(x):=\mathbbm{E}_x[\tau_{\sigma}\wedge r]$. By the monotone convergence theorem we have 
\begin{equation}\label{eq:SMP2}
    \mathbbm{E}_x[\tau_{\sigma}\wedge r] \xrightarrow{r \rightarrow \infty}\mathbbm{E}_x[\tau_{\sigma}]
\end{equation}
and
\begin{equation}\label{eq:SMP3}   
    \mathbbm{E}_{\alpha_0}[(\tau_{\sigma}-\tau_0)\wedge r \mid {\cal{F}}_{\tau_0}]\xrightarrow{r \rightarrow \infty} \mathbbm{E}_{\alpha_0}[(\tau_{\sigma}-\tau_0) \mid {\cal{F}}_{\tau_0}].
\end{equation}
Taking the limit w.r.t. $r$ in both sides of \eqref{eq:SMP1} and using \eqref{eq:SMP2} and \eqref{eq:SMP3} yields \eqref{eq:SMP}.


Since $X(\tau_0)$ is measurable with respect to ${\cal{F}}_{\tau_0}$, taking the conditional expectation of both sides of \eqref{eq:SMP}, conditioned on $X(\tau_0)$, yields
\begin{equation*}
    \mathbbm{E}_{\alpha_0}[\mathbbm{E}_{\alpha_0}[\tau_{\sigma} \mid {\cal{F}}_{\tau_0}]\mid X(\tau_0)] =
        \mathbbm{E}_{\alpha_0}[\tau_{\sigma} \mid X(\tau_0)]=
    \mathbbm{E}_{\alpha_0}[\tau_0 \mid X(\tau_0)]+g(X(\tau_0)),
\end{equation*}
which implies
\begin{equation}\label{eq:tau A not hits}
        \mathbbm{E}_{\alpha_0}[\tau_{\sigma} \mid X(\tau_0)=\alpha]=
    \mathbbm{E}_{\alpha_0}[\tau_0 \mid X(\tau_0)=\alpha]+\mathbbm{E}_{\alpha}[\tau_{\sigma}].
\end{equation}

Substituting \eqref{eq:tau A hits} and \eqref{eq:tau A not hits} in \eqref{eq:first tau} yields
\begin{align}\label{eq:first tau almost done}
    \mathbbm{E}_{\alpha_0}[\tau_{\sigma}]&=
    \sum_{\alpha \in {\cal{A}^*}}\mathbbm{E}_{\alpha_0}[\tau_0 \mid X(\tau_0)=\alpha]\mathbbm{P}_{\alpha_0}(X(\tau_0)=\alpha)\cr
    & \quad+\sum_{\alpha \in {\cal{A}^*}\setminus A}\mathbbm{E}_{\alpha}[\tau_{\sigma}]\mathbbm{P}_{\alpha_0}(X(\tau_0)=\alpha)\cr
    &= \mathbbm{E}_{\alpha_0}[\tau_0]+\sum_{\alpha \in {\cal{A}^*}\setminus A}\mathbbm{E}_{\alpha}[\tau_{\sigma}]\mathbbm{P}_{\alpha_0}(X(\tau_0)=\alpha),
\end{align}
where the last equality is due to fact that the value of $X(\tau_0)$ is almost surely in ${\cal{A}}^*$. We proceed by showing that the  two terms in the right hand side of \eqref{eq:first tau almost done} are finite, which, combined with \eqref{eq:tau A less than tau sig} proves \eqref{eq:tau} and concludes the proof. 

First, we consider the term $\mathbbm{E}_{\alpha_0}[\tau_0]$.
By \eqref{eq:sampling times}, the stopping time $\tau_0$ is the first time after time 0 such that one of the queue lengths, corresponding to a server different from $LI(0)$, equals zero. Under PI, the servers different from $LI(0)$ do not receive any work during $[1,\tau_0]$. By \eqref{eq:s assumption}, a non-idle server must serve at least one job during a time slot. Thus, 
\begin{equation}\label{eq:bound on tau 0}
    \tau_0 \leq \max_i\{Q_i(0) : i \neq LI(0)\} \leq \max_i\{Q_i(0)\}
\end{equation}
and therefore 
\begin{equation}\label{eq:finite first term}
 \mathbbm{E}_{\alpha_0}[\tau_0]\leq \max_i\{Q_i(0)\}<\infty, 
\end{equation}
where the last inequality is due to the finitness of the initial condition.

Next, we consider the second term in the right hand side of \eqref{eq:first tau almost done}. The proof proceeds by deriving an upper bound on $\mathbbm{E}_{\alpha}[\tau_{\sigma}]$ for $\alpha \in {\cal{A}^*}\setminus A$ by using the sampled chain $Y$, defined in \eqref{eq:Y}, and Lemma \ref{lem: drift}. 
Taking expectation on both sides of \eqref{foster inequalities} yields
\begin{equation}\label{eq:taking expectation}
\mathbb{E}_{\alpha}[({\cal{L}}(Y_{k+1})-{\cal{L}}(Y_{k}) )\mathbbm{1}_{\{k<\sigma\}}]\leq-\epsilon\mathbb{E}_{\alpha}[(\tau_{k+1}-\tau_k) \mathbbm{1}_{\{k<\sigma\}}]. 
\end{equation}
We use \eqref{eq:taking expectation} twice: to prove that $\tau_{\sigma}$ is well defined, namely, that $\sigma<\infty$ a.s., and then to derive a bound for $\mathbbm{E}_{\alpha}[\tau_{\sigma}]$.  

By the definition of the sampling times in \eqref{eq:sampling times}, we have $\tau_{k+1}-\tau_k \geq 1$. Thus \eqref{eq:taking expectation} implies that 
\begin{equation*}
\mathbb{E}_{\alpha}[({\cal{L}}(Y_{k+1})-{\cal{L}}(Y_{k}) )\mathbbm{1}_{\{k<\sigma\}}]\leq-\epsilon\mathbb{E}_{\alpha}[\mathbbm{1}_{\{k<\sigma\}}]. 
\end{equation*}
Summing over $k \in [0,m-1]$ for $m \geq 1$ yields
\begin{equation*}
\E_{\alpha}[{\cal{L}}(Y_{m \wedge \sigma})]-\E_{\alpha}[{\cal{L}}(Y_0)]\leq
-\epsilon\sum_{k=0}^{m-1}\E_{\alpha}[\mathbbm{1}_{\{k<\sigma\}}]=-\epsilon\sum_{k=0}^{m-1} \mathbbm{P}_{\alpha}(\sigma>k),
\end{equation*}
After rearranging we obtain
\begin{equation*}
\sum_{k=0}^{m-1} \mathbbm{P}_{\alpha}(\sigma>k)\leq
\E_{\alpha}[{\cal{L}}(Y_0)]/\epsilon-\E_{\alpha}[{\cal{L}}(Y_{m \wedge \sigma})]/\epsilon\leq \E_{\alpha}[{\cal{L}}(Y_0)]/\epsilon=\E_{\alpha}[{\cal{L}}(X(\tau_0))]/\epsilon,
\end{equation*}
where in the second inequality we used the fact that $\E_{\alpha}[{\cal{L}}(Y_{m \wedge \sigma})]\geq0$.
Hence
\begin{equation}\label{eq:expect of sigma}
\E_{\alpha}[\sigma]=\lim_{m \rightarrow \infty}\sum_{k=0}^{m-1} \mathbbm{P}_{\alpha}(\sigma>k)\leq \E_{\alpha}[{\cal{L}}(X(\tau_0))]/\epsilon.
\end{equation}
We now argue that $\E_{\alpha}[{\cal{L}}(X(\tau_0))]$ is finite for all initial states $\alpha \in {\cal{A}}$ (and therefore, specifically, for $\alpha \in {\cal{A}}^* \setminus A$). Using the definition of $\cal{L}$ in \eqref{eq:L function in lemma}, we have
\begin{equation}\label{eq:using def of L}
\mathbbm{E}_{\alpha}[{\cal{L}}(X(\tau_0))]=\mathbbm{E}_{\alpha}\Big[\sum_{i=1}^n(Q_i(\tau_0))^{1+\gamma}\Big]\leq \mathbbm{E}_{\alpha}\Big[\sum_{i=1}^n(Q_i(\tau_0))^{2}\Big],
\end{equation}
where the last inequality is due to the fact that $0< \gamma \leq 1$.
Denote the maximum queue length at time $t$ by $Q_{M}(t)$, namely, $Q_{M}(t)=\max_i\{Q_i(t)\}$. The value of $Q_{M}(0)$ is finite for all initial states $\alpha \in {\cal{A}}$. The quantities $Q_i(\tau_0)$ for servers different from the Last Idle server are each bounded from above by $Q_{M}(0)$. The queue length of the Last Idle server at $\tau_0$ is bounded from above by $Q_{M}(0)$ plus the number of jobs that arrived to the system during $[1,\tau_0]$. Therefore, we have
\begin{align}\label{eq:qmax bound}
\mathbbm{E}_{\alpha}\Big[\sum_{i=1}^n(Q_i(\tau_0))^{2}\Big]&\leq \mathbbm{E}_{\alpha}\Bigg[(n-1)Q_{M}^2(0)+\Big(Q_{M}(0)+\sum_{t=1}^{\tau_0}a(t)\Big)^{2}\Bigg]\cr
&\leq \mathbbm{E}_{\alpha}\Bigg[(n-1)Q_{M}^2(0)+\Big(Q_{M}(0)+\sum_{t=1}^{Q_{M}(0)}a(t)\Big)^{2}\Bigg]\cr
&\leq B\mathbbm{E}_{\alpha}\Big[Q_{M}^2(0)\Big]=B \Big(\max_{i \in [n]}\alpha_i \Big) ^2,
\end{align}
for some constant $B>0$, where the second inequality is due to \eqref{eq:bound on tau 0} and the third inequality is due to the finite first and second moments of the arrival process and the fact that $Q_{M}(0)$ is an integer. Using \eqref{eq:qmax bound} in \eqref{eq:using def of L} and recalling that $Q_{max}$ is finite yields
\begin{equation}\label{eq:finiteness of second term}
\mathbbm{E}_{\alpha}[{\cal{L}}(X(\tau_0))]\leq B \Big(\max_{i \in [n]}\alpha_i \Big) ^2<\infty, \quad \alpha \in {\cal{A}}.
\end{equation}
Equations \eqref{eq:expect of sigma} and \eqref{eq:finiteness of second term} establish that $\mathbbm{E}_{\alpha}[\sigma]< \infty$ and consequently 
\begin{equation}\label{eq:sigma less than infty}
    \sigma<\infty \quad \mbox{a.s.}
\end{equation}
Now, revisiting \eqref{eq:taking expectation}, summing over $k \in [0,m-1]$ yields
\begin{equation*}
\E_{\alpha}[{\cal{L}}(Y_{m \wedge \sigma})]-\E_{\alpha}[{\cal{L}}(Y_{0})]\leq
-\epsilon\E_{\alpha}[\tau_{m \wedge \sigma}].
\end{equation*}
Using the fact that $\E_{\alpha}[{\cal{L}}(Y_{m \wedge \sigma})]\geq0$ and rearranging yields
\begin{equation}\label{eq:bound on m}
\E_{\alpha}[\tau_{m \wedge \sigma}]\leq \E_{\alpha}[{\cal{L}}(X(\tau_0))]/\epsilon.
\end{equation}
Using \eqref{eq:sigma less than infty}, the monotone convergence theorem and \eqref{eq:bound on m}, we obtain
\begin{equation}\label{eq:n sigma}
\E_{\alpha}[\tau_{\sigma}]=\E_{\alpha}[\lim_{m \rightarrow \infty}\tau_{m \wedge\sigma}]
=\lim_{m \rightarrow \infty}\E_{\alpha}[\tau_{m \wedge\sigma}]
\leq \E_{\alpha}[{\cal{L}}(X(\tau_0))]/\epsilon\leq \bar{B}\Big(\max_{i \in [n]}\alpha_i \Big) ^2,
\end{equation}
where $\bar{B}>0$ is some constant and in the last inequality we used \eqref{eq:finiteness of second term}.
Using \eqref{eq:tau A less than tau sig}, \eqref{eq:first tau almost done} and  \eqref{eq:n sigma} yields
\begin{align}\label{eq:first tau almost done 2}
    \mathbbm{E}_{\alpha_0}[\tau^A]\leq\mathbbm{E}_{\alpha_0}[\tau_{\sigma}]
    &\leq \mathbbm{E}_{\alpha_0}[\tau_0]+\bar{B}\sum_{\alpha \in {\cal{A}^*}\setminus A} \Big(\max_{i \in [n]}\alpha_i \Big) ^2\mathbbm{P}_{\alpha_0}(X(\tau_0)=\alpha)
\end{align} 
Recalling the definition of $Q_{M}$, the second term on the right hand side of \eqref{eq:first tau almost done 2} satisfies
\begin{equation*}
    \sum_{\alpha \in {\cal{A}^*}\setminus A}\Big(\max_{i \in [n]}\alpha_i \Big) ^2\mathbbm{P}_{\alpha_0}(X(\tau_0)=\alpha)\leq \sum_{\alpha \in {\cal{A}^*}}\Big(\max_{i \in [n]}\alpha_i \Big) ^2\mathbbm{P}_{\alpha_0}(X(\tau_0)=\alpha)=\mathbbm{E}_{\alpha_0}\Big[Q_M^2(\tau_0)\Big].
\end{equation*}
Therefore
\begin{equation}\label{eq:only 2 left}
\mathbbm{E}_{\alpha_0}[\tau^A]\leq \mathbbm{E}_{\alpha_0}[\tau_0]+\bar{B}\mathbbm{E}_{\alpha_0}\Big[Q_M^2(\tau_0)\Big].
\end{equation}

Combining \eqref{eq:finite first term}, \eqref{eq:finiteness of second term} and \eqref{eq:only 2 left} proves \eqref{eq:tau},
which concludes the proof. 
\qed\\

\noindent \textbf{Proof of Lemma \ref{lem: drift}.}

First, we discuss the term $\mathbbm{1}_{\{k<\sigma\}}$ in \eqref{foster inequalities}. By \eqref{eq:Y}, we have $(Y_0,\ldots,Y_k)=(X({\tau_0}),\ldots,X({\tau_k}))$. By \eqref{eq:filtration}, ${\cal{F}}^{\xi}_{\tau_k}$ contains the entire evolution of the process $X$ during $[0,\tau_k]$ and therefore the values of the sampled chain $Y$, $Y_0,\ldots,Y_k$, are measurable with respect to ${\cal{F}}^{\xi}_{\tau_k}$. Since $\sigma$ (defined in \eqref{eq:sigma}) corresponds to the first sample which lies in the finite set $A$, the values $Y_0,\ldots,Y_k$ are enough to determine whether or not the event $\{k<\sigma\}$ occurred. Thus, the random variable $\mathbbm{1}_{\{k<\sigma\}}$ is also measurable with respect to ${\cal{F}}^{\xi}_{\tau_k}$, and we can write \eqref{foster inequalities} as
\begin{align}\label{foster inequalities out}
\mathbbm{1}_{\{k<\sigma\}}\mathbb{E}_{\alpha}[({\cal{L}}(Y_{k+1})-{\cal{L}}(Y_{k}) )\mid {\cal{F}}^{\xi}_{\tau_k}]\leq-\epsilon\mathbbm{1}_{\{k<\sigma\}}\mathbb{E}_{\alpha}[(\tau_{k+1}-\tau_k) \mid {\cal{F}}^{\xi}_{\tau_k}], 
\end{align}
Clearly, on the event $\{k\geq\sigma\}$, the two sides of inequality \eqref{foster inequalities out} equal zero and \eqref{foster inequalities} holds. 
The rest of proof is dedicated to proving \eqref{foster inequalities} on the event $\{k<\sigma\}$, namely, given that the values of $Y_0,\ldots,Y_k$ are all outside of $A$. For simplicity, we drop the subscript $\alpha$ from the expected value notation and write $\mathbbm{E}$ for $\mathbbm{E}_\alpha$ for the rest of the proof.

We begin by fixing $\epsilon>0$ and a function $\cal{L}$. We then consider a finite set $A$ of the form
\begin{equation}\label{eq:finite_set}
A=\{\alpha=(\alpha_1,\ldots,\alpha_n,l)\in {\cal{A}} \mid \sum_i \alpha_i \leq \cal{C}\},
\end{equation}
and show that if $\cal{C}$ is large enough, \eqref{foster inequalities} holds. 

Since $\lambda<\sum_{i=1}^n \mu_i$, there exists $\epsilon_0>0$ such that 
\begin{equation*}\label{eq:epsilon_0}
    \lambda-\sum_{i=1}^n \mu_i=-\epsilon_0.
\end{equation*}
Denote $\mu_{min}=\min_{i}\mu_i$ and let
\begin{equation}\label{eq:epsilon}
    \epsilon=(\epsilon_0\wedge\mu_{min})/4.
\end{equation}
Fix $0<\gamma\leq 1$ such that
\begin{equation}\label{eq:gamma}
    \max_{j\in \{1,\ldots,n\}}\Big\{\big([\lambda-\mu_j]^+\big)^{1+\gamma}-\sum_{i\neq j}\mu_i\Big\}<-2\epsilon.
\end{equation}
To prove the existence of such a $\gamma$, define $g_j(\gamma)=([\lambda-\mu_j]^+)^{1+\gamma} -\sum_{i \neq j}\mu_i$. Now,
\begin{align}\label{eq:g}
g_j(0)=[\lambda-\mu_j]^+ -\sum_{i \neq j}\mu_i=-\big(\epsilon_0\wedge\sum_{i \neq j}\mu_i\big)
\leq -\big(\epsilon_0\wedge\mu_{min}\big)=-4\epsilon.
\end{align}
Define $h(\gamma)=\max_{j\in \{1,\ldots,n\}}g_j(\gamma)$. By \eqref{eq:g}, $h(0)\leq-4\epsilon$. Since $g_j(\gamma)$ are continuous functions of $\gamma$, so is $h(\gamma)$. Thus, there exists $\gamma\in (0,1]$ such that $h(\gamma)< -2\epsilon$, and \eqref{eq:gamma} holds.

Recall (by \eqref{eq:L function in lemma}) the function ${\cal{L}}:{\cal{A}}\rightarrow \mathbbm{R}_+$, for $\alpha=(\alpha_1,\ldots,\alpha_n,l)\in {\cal{A}}$, is defined by
\begin{equation}\label{eq:L function}
{\cal{L}}(\alpha)=\sum_{i=1}^n\alpha_i^{1+\gamma}. 
\end{equation}
Starting with the left hand side of \eqref{foster inequalities}, we use \eqref{eq:L function} to obtain
\begin{align}\label{eq:1}
\mathbb{E}[{\cal{L}}(Y_{k+1})-{\cal{L}}(Y_{k}) \mid {\cal{F}}^{\xi}_{\tau_k}]=\mathbb{E}\Big[\sum_{i=1}^n\Big( Q_i^{1+\gamma}(\tau_{k+1})-{Q_i^{1+\gamma}(\tau_k)}\Big)\mid {\cal{F}}^{\xi}_{\tau_k}\Big].
\end{align}
The proof proceeds by decomposing the sum in the right hand side of \eqref{eq:1} into two parts and analyzing them separately: the first part corresponds to the Last Idle server at time $\tau_{k}+1$, which we denote by
\begin{equation}\label{eq:def of lk}
    l_k=LI(\tau_k+1).
\end{equation}
The second part corresponds to the rest of the servers. 

By \eqref{eq:maxflow}, the value of $l_k$ is determined using $X(\tau_k)$ and $\xi_{\tau_k+1}$, which, by \eqref{eq:filtration}, are included in ${\cal{F}}^{\xi}_{\tau_k}$. Thus, the random variable $l_k$ is measurable with respect to ${\cal{F}}^{\xi}_{\tau_k}$. By the definition of the sampling times in \eqref{eq:omega} and \eqref{eq:sampling times}, the identity of the Last Idle server remains unchanged during $[\tau_k+1,\tau_{k+1}]$ and equals to $l_k$. Also, 
\begin{equation}\label{eq:starts from zero}
    Q_{l_k}(\tau_k)=0. 
\end{equation}
Thus, during $[\tau_k+1,\tau_{k+1}]$, the server corresponding to $l_k$ receives all incoming jobs and its queue length is given by a reflected random walk starting at zero. The rest of the queue lengths decrease until the first reaches zero at time $\tau_{k+1}$. Therefore, using the balance equation \eqref{eq:maxflow}, we obtain
\begin{equation}\label{eq:dynamics}
    Q_{i}(\tau_{k+1})=\Bigg[Q_i(\tau_k)-\sum_{t=\tau_k+1}^{\tau_{k+1}}s_i(t)\Bigg]^+, \quad i \in \{1,\ldots,n\}\setminus \{l_k\}.
\end{equation}
For simplicity, denote 
\begin{align}\label{eq:easy notation}
    &Q_i^k=Q_i(\tau_k), \quad i \in \{1,\ldots,n\},\cr
    &R_i^k=\sum_{t=\tau_k+1}^{\tau_{k+1}}s_i(t), \quad i \in \{1,\ldots,n\}\setminus \{l_k\}.
\end{align}
Using \eqref{eq:def of lk}, \eqref{eq:starts from zero}, \eqref{eq:dynamics} and \eqref{eq:easy notation}, we decompose the sum in the right hand side of \eqref{eq:1} as follows: 
\begin{align}\label{eq:2}
\mathbb{E}\Big[\sum_{i=1}^n\Big( Q_i^{1+\gamma}(\tau_{k+1})-{Q_i^{1+\gamma}(\tau_k)}\Big)\mid {\cal{F}}^{\xi}_{\tau_k}\Big]
&=\mathbb{E}\Big[\sum_{i\neq l_k}^n\Big( ([Q_i^k-R_i^k]^+)^{1+\gamma}-{(Q_i^k)^{1+\gamma}}\Big)\mid {\cal{F}}^{\xi}_{\tau_k}\Big]\cr  &\quad +\mathbb{E}\Big[ \Big({Q_{l_k}(\tau_{k+1})}\Big)^{1+\gamma}\mid {\cal{F}}^{\xi}_{\tau_k}\Big].
\end{align}
Denote
\begin{equation}\label{eq:delta}
    \Delta_k=\tau_{k+1}-\tau_k.
\end{equation}
We proceed by bounding the two terms in the right hand side of \eqref{eq:2} separately. We begin with $\mathbb{E}\Big[ \Big({Q_{l_k}(\tau_{k+1})}\Big)^{1+\gamma}\mid {\cal{F}}^{\xi}_{\tau_k}\Big]$. Conditioned on ${\cal{F}}^{\xi}_{\tau_k}$, the duration of the interval $[\tau_k+1,\tau_{k+1}]$, given by $\Delta_k$, depends only on the queue lengths at the servers other than $l_k$ and is therefore independent of the reflected random walk in server $l_k$. Denote 
\begin{align}\label{eq: beta and sigma}
    \beta_{i}&:=\lambda-\mu_{i}\cr
    \sigma_{i}^2&:=\sigma_a^2+\sigma_{s_{i}}^2.
\end{align}
Using a modification of Lemma 6 of \cite{PI1}, we obtain
\begin{equation}\label{eq: RRW bound}
    \mathbb{E}\Big[ \Big({Q_{l_k}(\tau_{k+1})}\Big)^{1+\gamma}\mid {\cal{F}}^{\xi}_{\tau_k}\Big]\leq ([\lambda-\mu_{l_k}]^+)^{1+\gamma}\mathbb{E}[\Delta_k^{1+\gamma}\mid {\cal{F}}^{\xi}_{\tau_k}]+
    C\mathbb{E}[\Delta_k^{0.75(1+\gamma)}\mid {\cal{F}}^{\xi}_{\tau_k}],
\end{equation}
where $C=\max_i\{\big(16\sigma_i+8(\sigma_i)
^{1/2}(\beta_i)^+\big)^{(1+\gamma)/2}\}$. The proof is deferred to the Appendix. 

Next, we consider the first term in the right hand side of \eqref{eq:2}. For ease of exposition, we suppress the index $k$ and write $Q_i$ and $R_i$ for $Q_i^k$ and $R_i^k$ respectively. We have 
\begin{align}\label{eq:taylor}
    ([Q_i-R_i]^+)^{1+\gamma}-{Q_i^{1+\gamma}}&=Q_i^{1+\gamma}(([1-R_i/(Q_i\vee 1)]^+)^{1+\gamma}-1)\cr
    & \quad \leq Q_i^{1+\gamma}([1-R_i/(Q_i\vee 1)]^+-1)\cr
    &= -Q_i^{\gamma}(Q_i\wedge R_i),
\end{align}
where the inequality is due to the fact that $R_i/(Q_i\vee 1) \geq 0$ and thus $0\leq[1-R_i/(Q_i\vee 1)]^+\leq 1$. 
Combining \eqref{eq:1}, \eqref{eq:2}, \eqref{eq: RRW bound} and \eqref{eq:taylor} we obtain
\begin{align}\label{eq:before cases}
    \mathbb{E}[{\cal{L}}(Y_{k+1})-{\cal{L}}(Y_{k}) \mid {\cal{F}}^{\xi}_{\tau_k}]\leq 
    -\epsilon\mathbb{E}[\Delta_k \mid {\cal{F}}^{\xi}_{\tau_k}]+\Phi_k,
\end{align}
where
\begin{equation}\label{eq:phi}
    \Phi_k:=    
    \mathbb{E}\Big[\epsilon\Delta_k+([\lambda-\mu_{l_k}]^+)^{1+\gamma}\Delta_k^{1+\gamma}+
    C\Delta_k^{0.75(1+\gamma)}-\sum_{i\neq l_k}^nQ_i^{\gamma}(Q_i\wedge R_i)\mid {\cal{F}}^{\xi}_{\tau_k}\Big].
\end{equation}
Thus, using \eqref{eq:before cases}, \eqref{foster inequalities} will follow once we prove that $\Phi_k\leq 0$ a.s. To this end, denote
\begin{align*}
    &Q_{min}=\min_{1\leq i \leq n, i \neq l_k}Q_i, \cr
    &Q_{max}=\max_{1\leq i \leq n}Q_i.
\end{align*}
Denote by $i_{max}$ the server with the maximum queue length at time $\tau_k$, namely  
\begin{equation*}
    i_{max}=\mbox{argmax}_{1\leq i \leq n}\{Q_i\}
\end{equation*}
Since the queue length at time $\tau_k$ of the Last Idle server $l_k$ equals zero, we have $i_{max}\neq l_k$.

The key to analyzing \eqref{eq:phi} is to relate $\Delta_k$, the duration of the interval $[\tau_{k}+1,\tau_{k+1}]$, to the value of $Q_{min}$, which, by \eqref{eq:filtration}, is measurable with respect to ${\cal{F}}^{\xi}_{\tau_k}$. By the definition of the sampling times in \eqref{eq:sampling times} we have $\Delta_k\geq 1$. By \eqref{eq:s assumption}, any strictly positive queue lengths of the servers different from $l_k$ must decrease by at least one and by at most  $s_{max}$ at each time slot. Therefore, given ${\cal{F}}^{\xi}_{\tau_k}$, we have
\begin{align}\label{eq:relate}
    (Q_{min}/s_{max})\vee 1 \leq \Delta_k\leq Q_{min}\vee 1
\end{align}

The argument proceeds by analyzing \eqref{eq:phi} in two cases, corresponding to the value of $Q_{min}$. The idea is that if $Q_{min}$ is small, then by \eqref{eq:relate}, so is $\Delta_k$. Thus, if the finite set in \eqref{eq:finite_set} is chosen large enough so that $Q_{max}$ must be very large, one of the members of $\sum_{i\neq l_k}^nQ_i^{\gamma}(Q_i\wedge R_i)$ is very large compared to the other terms in the right hand side of \eqref{eq:phi}, resulting in $\Phi_k \leq 0$. Otherwise, if $Q_{min}$ is large, then by \eqref{eq:relate} so is $\Delta_k$. In this case, we prove that  
$$\mathbb{E}\Big[\sum_{i\neq l_k}^nQ_i^{\gamma}(Q_i\wedge S_i)\mid {\cal{F}}^{\xi}_{\tau_k}\Big]\geq 
\mathbb{E}\Big[\Big(\sum_{i\neq l_k}^n \mu_i\Big)\Delta_k^{1+\gamma}-(n-1)s_{max}\Delta_k^\gamma\mid {\cal{F}}^{\xi}_{\tau_k}\Big].$$
Thus, if we use this to further bound $\Phi_k$ from above, by \eqref{eq:gamma}, the coefficient of the leading term $\Delta_k^{1+\gamma}$ is negative. If $\Delta_k$ is large enough we again obtain $\Phi_k \leq 0$.

Define
\begin{equation}\label{eq:f}
  f(x)=  \epsilon x -2\epsilon x^{1+\gamma}+Cx^{0.75(1+\gamma)}+(n-1)s_{max}x^\gamma,
\end{equation}
where $C$ is from \eqref{eq: RRW bound}.
Since the coefficient of the leading term $x^{1+\gamma}$ in $f(x)$ is negative, there exists $u>0$ such that 
\begin{equation}\label{eq:u}
   u>s^2_{max} \quad \mbox{ and } \quad f(x)<0 \quad \mbox{ if } x>u/s_{max}.
\end{equation}
Fix such $u$.\\
Choose the constant $\cal{C}$ in the definition of the finite set $A$ in \eqref{eq:finite_set} such that
\begin{equation}\label{eq:choice of finite set}
    {\cal{C}}>ns_{max}u + n\Big(\epsilon u+(\max_i\{[\lambda-\mu_i]^+\})^{1+\gamma}u^{1+\gamma}+
    Cu^{0.75(1+\gamma)}\Big)^{1/\gamma}+n\Big(\epsilon+n(\sigma_a^2+\lambda^2)\Big)^{1/\gamma}.
\end{equation}
By \eqref{eq:finite_set}, for any state outside of $A$ we have ${\cal{C}}<\sum_iQ_i\leq nQ_{max}$. Thus, by \eqref{eq:choice of finite set}, we obtain
\begin{equation}\label{eq:qmax}
    Q_{max}>s_{max}u + \Big(\epsilon u+(\max_i\{[\lambda-\mu_i]^+\})^{1+\gamma}u^{1+\gamma}+
    Cu^{0.75(1+\gamma)}\Big)^{1/\gamma}.
\end{equation}
We now consider two cases that correspond to the value of $Q_{min}$, which, by \eqref{eq:relate}, is related to the duration of the interval. The first two members of the right hand side of \eqref{eq:choice of finite set} correspond to our requirements of the finite set $A$ for dealing with the aforementioned two cases. The third member is used for the proof of Theorem \eqref{main result PI-Split} on PI-Split.\\

\noindent \textbf{Case 1: (short interval)} $Q_{min}\leq u$. 

In this case, by using \eqref{eq:relate}, we obtain 
\begin{equation}\label{eq:case 1 delta}
    \Delta_k\leq u \quad \mbox{if} \quad Q_{min}\leq u,
\end{equation}
namely, the duration of the interval is at most $u$.
Since by \eqref{eq:qmax} $Q_{max}>s_{max}u$ and $s_{max}$ is the maximum number of jobs that can be served during a single time slot, the largest queue length at $\tau_k$ (in server $i_{max}$) does not reach zero at $\tau_{k+1}$. By \eqref{eq:s assumption} the queue length in server $i_{max}$ must decrease by at least one. Thus, using \eqref{eq:easy notation}, we conclude that $R_{i_{max}}\geq 1$ and $Q_{max}\wedge R_{i_{max}}\geq 1$.
 Therefore, we have
\begin{equation}\label{eq:case 1 qmax}
    \sum_{i\neq l_k}^nQ_i^{\gamma}(Q_i\wedge R_i)\geq Q_{max}^{\gamma}(Q_{max}\wedge R_{i_{max}})\geq Q_{max}^\gamma.
\end{equation}
Using \eqref{eq:case 1 delta} and \eqref{eq:case 1 qmax} to upper bound the right hand side of \eqref{eq:phi} yields
$$\Phi_k\leq \mathbb{E}\Big[\epsilon u+([\lambda-\mu_{l_k}]^+)^{1+\gamma}u^{1+\gamma}+
    Cu^{0.75(1+\gamma)}-Q_{max}^\gamma\mid {\cal{F}}^{\xi}_{\tau_k}\Big]<0,$$
where the last inequality is due to \eqref{eq:qmax}.\\

\noindent \textbf{Case 2: (long interval)} $Q_{min}> u$. 

By the definition of the sampling times, $R_i$ can be larger than $Q_i$ by at most $s_i(\tau_{k+1})$ (the number of jobs potentially served by server $i$ at the last time slot of the interval $[\tau_k+1,\tau_{k+1}]$), which we assumed in \eqref{eq:s assumption} is bounded from above by $s_{max}$. Thus
\begin{equation*}
    Q_i\wedge R_i\geq (R_i-s_{max})\wedge R_i=R_i-s_{max}.
\end{equation*}
Therefore
\begin{equation}\label{eq:case2bound}
    \mathbb{E}\Big[\sum_{i\neq l_k}^nQ_i^{\gamma}(Q_i\wedge R_i)\mid {\cal{F}}^{\xi}_{\tau_k}\Big]\geq \mathbb{E}\Big[\sum_{i\neq l_k}^nQ_i^{\gamma}(R_i-s_{max})\mid {\cal{F}}^{\xi}_{\tau_k}\Big]=\sum_{i\neq l_k}^nQ_i^{\gamma}(\mathbb{E}[R_i\mid {\cal{F}}^{\xi}_{\tau_k}]-s_{max}),
\end{equation}
where the last equality is due to the fact that $Q_i$ is measurable with respect to ${\cal{F}}^{\xi}_{\tau_k}$ for all $i$. 

Next, we calculate $\mathbb{E}[R_i\mid {\cal{F}}^{\xi}_{\tau_k}]$. By the definitions of $R_i$ in \eqref{eq:easy notation} and $\Delta_k$ in \eqref{eq:delta}, this is the expected value of a random walk with strictly positive step sizes, distributed as $s_i(0)$, starting at time $\tau_k+1$ and stopped after $\Delta_k$ time slots. If $\Delta_k$ only depended on the queue length at server $i$, then it would have been a stopping time, and by Wald's Equation and the finitness of the sampling times the expected value would have been equal to $\mu_i\mathbb{E}[\Delta_k\mid {\cal{F}}^{\xi}_{\tau_k}]$. However, this is not the case here, since $\Delta_k$ depends on all servers different from $l_k$. Even so, we are able to prove the same result holds. Using  the definition of $R_i$ in \eqref{eq:easy notation}  and the fact that $\tau_{k+1}=\tau_k+\Delta_k$ we obtain
\begin{align}\label{eq:s_i1}
    \mathbb{E}[R_i\mid {\cal{F}}^{\xi}_{\tau_k}]&=\mathbb{E}\Bigg[\sum_{t=\tau_k+1}^{\tau_{k+1}}s_i(t)\mid {\cal{F}}^{\xi}_{\tau_k}\Bigg]=
    \mathbb{E}\Bigg[\sum_{t=\tau_k+1}^{\tau_k+\Delta_k}s_i(t)\mid {\cal{F}}^{\xi}_{\tau_k}\Bigg]\cr
    &=\mathbb{E}\Bigg[\sum_{t=\tau_k+1}^{\tau_k+Q_{min}}s_i(t)\mathbbm{1}_{\{t\leq \tau_k+\Delta_k\}}\mid {\cal{F}}^{\xi}_{\tau_k}\Bigg]\cr
    &=\mathbb{E}\Bigg[\sum_{m=1}^{Q_{min}}s_i(\tau_k+m)\mathbbm{1}_{\{m\leq \Delta_k\}}\mid {\cal{F}}^{\xi}_{\tau_k}\Bigg],
\end{align}
where the second to last equality is due to the fact that by \eqref{eq:relate}, we have $\Delta_k\leq Q_{min}$, and the last equality is a simple variable change.

Since the random variables $Q_{min}$ and $\tau_k$ are both measurable with respect to ${\cal{F}}^{\xi}_{\tau_k}$ 
and are finite a.s., we can change the order of the summation and the expectation, yielding
\begin{align}\label{eq:s_i2}
    \mathbb{E}\Bigg[\sum_{m=1}^{Q_{min}}s_i(\tau_k+m)\mathbbm{1}_{\{m\leq \Delta_k\}}\mid {\cal{F}}^{\xi}_{\tau_k}\Bigg]=
    \sum_{m=1}^{Q_{min}}\mathbb{E}[s_i(\tau_k+m)\mathbbm{1}_{\{m\leq \Delta_k\}}\mid {\cal{F}}^{\xi}_{\tau_k}].
\end{align}

The event $\{m\leq \Delta_k\}$ is equivalent to the event $\{\Delta_k \leq m-1\}^c$.  Thus, given ${\cal{F}}^{\xi}_{\tau_k}$, whether or not the event $\{m\leq \Delta_k\}$ occurred is determined by the values of all the service processes, in all servers, during the interval $[\tau_k+1,\ldots,\tau_k+m-1]$, which are independent of $s_i(\tau_k+m)$. Therefore
\begin{align}\label{eq:s_i3}
    \sum_{m=1}^{Q_{min}}\mathbb{E}[s_i(\tau_k+m)\mathbbm{1}_{\{m\leq \Delta_k\}}\mid {\cal{F}}^{\xi}_{\tau_k}]&=
    \sum_{m=1}^{Q_{min}}\mathbb{E}[s_i(\tau_k+m)\mid {\cal{F}}^{\xi}_{\tau_k}]\mathbb{E}[\mathbbm{1}_{\{m\leq \Delta_k\}}\mid {\cal{F}}^{\xi}_{\tau_k}]\cr
    &=\sum_{m=1}^{Q_{min}}\mu_i\mathbb{E}[\mathbbm{1}_{\{m\leq \Delta_k\}}\mid {\cal{F}}^{\xi}_{\tau_k}]=\mu_i\mathbb{E}[\Delta_k\mid {\cal{F}}^{\xi}_{\tau_k}],
\end{align}
where the second equality is due to $s_i(\tau_k+m)$ being independent of ${\cal{F}}^{\xi}_{\tau_k}$ and the last equality is due to the fact that $\sum_{m=1}^{Q_{min}}\mathbbm{1}_{\{m\leq \Delta_k\}}=\Delta_k$.

Combining \eqref{eq:case2bound}, \eqref{eq:s_i1}, \eqref{eq:s_i2} and \eqref{eq:s_i3} yields
\begin{equation}\label{eq:case2bound2}
    \mathbb{E}\Big[\sum_{i\neq l_k}^nQ_i^{\gamma}(Q_i\wedge R_i)\mid {\cal{F}}^{\xi}_{\tau_k}\Big]\geq \sum_{i\neq l_k}^nQ_i^{\gamma}(\mu_i\mathbb{E}[\Delta_k\mid {\cal{F}}^{\xi}_{\tau_k}]-s_{max})=\sum_{i\neq l_k}^n \mathbb{E}[Q_i^{\gamma}(\mu_i\Delta_k-s_{max})\mid {\cal{F}}^{\xi}_{\tau_k}].
\end{equation}
Since we consider the case where $Q_{min}>u$, and by \eqref{eq:u} $u>s^2_{max}$, by \eqref{eq:relate} we obtain
\begin{equation}\label{eq:finally}
    \Delta_k > Q_{min}/s_{max}>u/s_{max}>s_{max}.
\end{equation}
Thus, using the fact that by \eqref{eq:mu assumption}, $\mu_i\geq 1$ for all $i$, we obtain
\begin{equation}\label{eq:positive}
    \mu_i\Delta_k-s_{max}>0.
\end{equation}
Using \eqref{eq:case2bound2}, \eqref{eq:positive} and the fact that $\Delta_k \leq Q_{min} \leq Q_i$ for $i \neq l_k$, we obtain
\begin{align}\label{eq:case2bound3}
    \mathbb{E}\Big[\sum_{i\neq l_k}^nQ_i^{\gamma}(Q_i\wedge R_i)\mid {\cal{F}}^{\xi}_{\tau_k}\Big]&\geq \sum_{i\neq l_k}^n \mathbb{E}[\Delta_k^{\gamma}(\mu_i\Delta_k-s_{max})\mid {\cal{F}}^{\xi}_{\tau_k}]=\sum_{i\neq l_k}^n \mathbb{E}[\mu_i\Delta_k^{1+\gamma}-s_{max}\Delta_k^{\gamma}\mid {\cal{F}}^{\xi}_{\tau_k}]\cr
    &= \mathbb{E}\Big[\Big(\sum_{i\neq l_k}^n\mu_i\Big)\Delta_k^{1+\gamma}\mid {\cal{F}}^{\xi}_{\tau_k}\Big]-(n-1)s_{max}\mathbb{E}[\Delta_k^{\gamma}\mid {\cal{F}}^{\xi}_{\tau_k}].
\end{align}
Using \eqref{eq:case2bound3} to bound the right hand side of \eqref{eq:phi} from above, we obtain
\begin{align*}
    \Phi_k &\leq \mathbb{E}\Big[\epsilon\Delta_k+\Big(([\lambda-\mu_{l_k}]^+)^{1+\gamma}-\sum_{i\neq l_k}^n\mu_i\Big)\Delta_k^{1+\gamma}+
    C\Delta_k^{0.75(1+\gamma)}+(n-1)s_{max}\Delta_k^{\gamma}\mid {\cal{F}}^{\xi}_{\tau_k}\Big]\cr
    &\leq 
    \mathbb{E}\Big[\epsilon\Delta_k-2\epsilon\Delta_k^{1+\gamma}+
    C\Delta_k^{0.75(1+\gamma)}+(n-1)s_{max}\Delta_k^{\gamma}\mid {\cal{F}}^{\xi}_{\tau_k}\Big]=\mathbb{E}[f(\Delta_k)]\mid {\cal{F}}^{\xi}_{\tau_k}],
\end{align*}
where we have used the definitions of $\gamma$ in \eqref{eq:gamma} and $f(\cdot)$ in \eqref{eq:f}. Finally, by \eqref{eq:finally} $\Delta_k>u/s_{max}$, so by \eqref{eq:u} we have $f(\Delta_k)<0$ yielding $\Phi_k \leq 0$. This concludes the proof. 

\qed 

\subsection{Stability of PI-Split} \label{subsec: stability of PI split}

\begin{theorem}\label{main result PI-Split}
Assume $\lambda < \sum_{i=1}^n\mu_i$. Then: \\
(i) $X^S$ is positive recurrent. Consequently, \\
(ii) $X^S$ has a unique stationary distribution, denoted by $\pi^S_{X}$, and \\
(iii) For any initial state $\alpha\in\mathcal{A}$ and any $B \subset \mathcal{A}$, $\mathbb{P}_\alpha(X^S(t)\in B) \rightarrow \pi^S_{X}(B)$ as $t \rightarrow \infty$.
\end{theorem}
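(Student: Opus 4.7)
The plan is to mirror the proof of Theorem \ref{main result PI} verbatim, replacing $X$ by $X^S$ throughout. I would define the hitting time $\tau^{A,S}$ via \eqref{eq:tau A def}, the sampling times $\tau_k^S$ via \eqref{eq:sampling times}, the sampled chain $Y_k^S = X^S(\tau_k^S)$, and reuse the same Lyapunov function ${\cal L}(\alpha) = \sum_i \alpha_i^{1+\gamma}$, the same constants $\gamma, \epsilon, u$, and the same threshold ${\cal C}$ from \eqref{eq:choice of finite set}. The reduction of positive recurrence to a state-dependent drift for $Y^S$ via the strong Markov property, the derivation of parts (ii)--(iii) from part (i), and the verification that $\mathbb{E}_{\alpha_0}[{\cal L}(X^S(\tau_0^S))] < \infty$, are structural and go through without change. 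For the bound on $\mathbb{E}_{\alpha_0}[\tau_0^S]$, I would observe that any non-LI server with $Q_i^S(0) > 0$ still drains by at least one job per slot, and that the servers in $I^S(0) \setminus \{LI^S(0)\}$ can receive at most one round of sub-batching before being handled identically, so $\tau_0^S \leq \max_i Q_i^S(0) + a(1)$.

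The only step requiring new work is the analog of Lemma \ref{lem: drift} for $Y^S$. The key structural observation that makes the proof essentially the same is this: by the definition of $\tau_{k+1}^S$, the event $\Omega_t^S$ fails for $t \in [\tau_k^S + 1, \tau_{k+1}^S - 1]$, so the only server that can be idle in that sub-interval is $l_k = LI^S(\tau_k^S + 1)$ itself. The splitting rule \eqref{eq:f cond} then forces every arriving batch in that sub-interval to be routed entirely to $l_k$, and so the PI-Split and PI dynamics coincide on $[\tau_k^S + 2, \tau_{k+1}^S]$. Splitting can occur at most at the single time slot $\tau_k^S + 1$, where a batch of size $a(\tau_k^S + 1)$ may be distributed among the servers in $I^S(\tau_k^S)$.

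I would then revisit the decomposition \eqref{eq:2}. The Last-Idle term still satisfies \eqref{eq: RRW bound}, since $l_k$ starts from zero at $\tau_k^S$ and the total arrivals to $l_k$ over $\Delta_k$ steps are bounded above by $\sum_{t=\tau_k^S+1}^{\tau_{k+1}^S} a(t)$, exactly as in PI. For non-LI servers, the drainage relation \eqref{eq:dynamics} now starts from $Q_i^S(\tau_k^S + 1)$, which equals $[Q_i^S(\tau_k^S) - s_i(\tau_k^S + 1)]^+$ for $i \notin I^S(\tau_k^S)$ and is bounded by $a(\tau_k^S + 1)$ for $i \in I^S(\tau_k^S) \setminus \{l_k\}$. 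Substituting into the Taylor bound \eqref{eq:taylor} and the conditional Wald-type computation \eqref{eq:s_i1}--\eqref{eq:s_i3} introduces a single additional term in \eqref{eq:phi} whose conditional expectation is bounded by $(n-1)\mathbb{E}[a(1)^{1+\gamma}] \leq (n-1)(\sigma_a^2 + \lambda^2)$, because at most $n-1$ non-LI servers can be perturbed and each perturbation is bounded by $a(\tau_k^S+1)$.

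This additive constant correction is exactly what the third summand $n(\epsilon + n(\sigma_a^2 + \lambda^2))^{1/\gamma}$ in the choice of ${\cal C}$ at \eqref{eq:choice of finite set} was placed there to absorb. In Case 1 of the case analysis, the lower bound $Q_{\max}^\gamma$ that dominated the PI terms now also exceeds the new correction by virtue of the third summand of ${\cal C}$; in Case 2, the correction is a bounded constant, so the leading $-2\epsilon \Delta_k^{1+\gamma}$ term still drives $\Phi_k \leq 0$ once $\Delta_k$ exceeds the threshold from \eqref{eq:u}. The main obstacle is thus the careful bookkeeping of this additional term across the decomposition and verifying that the Wald-style identity $\mathbb{E}[R_i \mid {\cal F}^\xi_{\tau_k^S}] = \mu_i \mathbb{E}[\Delta_k \mid {\cal F}^\xi_{\tau_k^S}]$ survives reindexing the sum in \eqref{eq:s_i1} to start from $t = \tau_k^S + 2$; no new analytic idea beyond what appears in the proof of Lemma \ref{lem: drift} is needed.
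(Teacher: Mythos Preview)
Your route is viable but considerably more elaborate than the paper's. The paper does not thread the splitting perturbation through the entire machinery of Lemma~\ref{lem: drift}. Instead it makes a clean case distinction on the state $Y_k^S=\alpha=(\alpha_1,\ldots,\alpha_n,l)$. When exactly one coordinate $\alpha_i$ with $i\neq l$ vanishes, PI-Split routes the whole batch to that single idle server, so the dynamics on $[\tau_k^S+1,\tau_{k+1}^S]$ coincide with PI and the proof of Lemma~\ref{lem: drift} applies verbatim. When at least two such coordinates vanish, the paper argues that the next sampling time is one slot away and establishes the drift as a direct one-step inequality $\mathbb{E}_\alpha[\mathcal{L}(X^S(1))]-\mathcal{L}(\alpha)\le-\epsilon$: the maximal-queue server $j^*$ receives no work and contributes $-(Q_{j^*}^S(0))^\gamma$, each remaining server contributes at most $\mathbb{E}[a(1)^2]$, and the third summand in \eqref{eq:choice of finite set} is sized exactly so that the former dominates. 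No reflected-walk bound, no Wald identity, no Case~1/Case~2 split is needed in this branch.

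Your perturbation approach instead carries an extra term through both cases of the long-interval analysis. This forces you to revisit two ingredients that the paper's argument sidesteps. First, the deterministic bound $\Delta_k\le Q_{\min}\vee 1$ of \eqref{eq:relate} fails once the other idle servers receive sub-batches at $\tau_k^S+1$: their drain time is now random and depends on $a(\tau_k^S+1)$, so in the multi-idle situation $\Delta_k^S$ is not bounded by $u$ and your Case~1 bookkeeping does not close as written. Second, the conditional independence between $\Delta_k$ and the reflected walk at $l_k$ that underlies \eqref{eq: RRW bound} is lost, since both now depend on how $a(\tau_k^S+1)$ was split. Neither issue is fatal---one can condition further on the state at $\tau_k^S+1$ and bound $\Delta_k^S\le 1+a(\tau_k^S+1)$---but the paper avoids them entirely by reducing the multi-idle case to a single-step drift.
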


\noindent \textit{Proof.} 
The proof is similar to that of Theorem \eqref{main result PI}. Specifically, we prove that the state dependent drift in Lemma \eqref{lem: drift} is also fulfilled by the sampled chain $Y^S$, with the same finite set $A$ \eqref{eq:finite_set}, constants $\epsilon$ \eqref{eq:epsilon}, $\gamma$ \eqref{eq:gamma} and the Lyapunov function ${\cal{L}}$ \eqref{eq:L function}, but with stopping times and a filtration which correspond to the chain $X^S$. Theorem \eqref{main result PI-Split} then follows from the same arguments made in the proof of Theorem \eqref{main result PI}. 

Recall that ${\cal{A}}^*$ denotes the collection of states in which there is at least one queue length equal to zero that does not correspond to the Last Idle server. For initial states $\alpha\in {\cal{A}}^*\setminus A$ in which \textit{there is only one such queue length}, the behavior of PI and PI-Split is identical since batches are not split until (at least) the next sampling time. Thus, the analysis in the proof of Lemma \eqref{lem: drift} proves that the desired state dependent drift holds for these initial states. 

It remains to consider initial states $\alpha\in {\cal{A}}^*\setminus A$ such that there are at least two queue lengths equal to zero which do not correspond to the Last-Idle server. By the definition of the sampling times, the next sampling time must be after exactly 1 time slot. Thus, it remains to prove that
\begin{equation}\label{eq:Split remains to prove}
    \mathbbm{E}_{\alpha}\Big[{\cal{L}}(X^S(1))-{\cal{L}}(X^S(0))\Big]=\sum_{i=1}^n\mathbbm{E}_{\alpha}\Big[(Q_i^S(1))^{1+\gamma}-(Q_i^S(0))^{1+\gamma}\Big]\leq -\epsilon.
\end{equation}
Decompose the sum after the first equality into three parts:\\

\noindent \textbf{Part 1:} the single member of the sum corresponding to the server with the maximum queue length at time slot 0 (if there are several, choose one arbitrarily). Denote the server with the maximal queue length by $j^*=\mbox{argmax}_{1\leq i\leq n}Q^S_i(0)$. By the definition of PI-Split, server $j^*$ does not receive jobs at the beginning of time slot $1$ because its queue length is positive and there are other serves which are idle. By the definition of the finite set in \eqref{eq:finite_set} and \eqref{eq:choice of finite set}, $Q^S_{j^*}(0)\geq s_{max}$. Hence, the queue length at server $j*$ cannot reach zero after one time slot and we have $Q^S_{j^*}(1)=Q^S_{j^*}(0)-s_{j^*}(1)$. \\

\noindent \textbf{Part 2:} the members of the sum corresponding to servers with queue length equal to zero at time slot 0. The queue lengths of these servers at time slot 1 is upper bounded by the number of jobs that arrive at the beginning of time slot 1, namely, $a(1)$. Thus, since $\gamma\leq 1$, $a^2(1)$ is an upper bound for the contribution of each member of part 2 to the sum in \eqref{eq:Split remains to prove}.\\

\noindent \textbf{Part three:} the members of the sum not included in parts 1 and 2. By the definition of PI-Split, members of part 3 do not receive any work. Thus, their contribution to the sum in \eqref{eq:Split remains to prove} is non positive. Specifically, it is upper bounded by $a^2(1)$.\\

We conclude that 
\begin{align}\label{eq:Split leave one}
    \mathbbm{E}_{\alpha}\Big[{\cal{L}}(X^S(1))-{\cal{L}}(X^S(0))\Big]&\leq
    \mathbbm{E}_{\alpha}\Big[(Q_{j^*}^S(1))^{1+\gamma}-(Q_{j^*}^S(0))^{1+\gamma}\Big]+(n-1)\mathbbm{E}_{\alpha}[a^2(1)]\cr
    &=\mathbbm{E}_{\alpha}\Big[(Q_{j^*}^S(0)-s_{j^*}(1))^{1+\gamma}-(Q_{j^*}^S(0))^{1+\gamma}\Big]+n(\sigma_a^2+\lambda^2)\cr
    &\leq (Q_{j^*}^S(0)-1)^{1+\gamma}-(Q_{j^*}^S(0))^{1+\gamma}+n(\sigma_a^2+\lambda^2)\cr
    &\leq -(Q_{j^*}^S(0))^{\gamma}+n(\sigma_a^2+\lambda^2),
\end{align}
where in the second inequality we used the fact that by \eqref{eq:s assumption}, $s_{j^*}(1)\geq 1$ and that given the initial state $\alpha$, $j^*$ and $Q_{j^*}^S(0)$ are known. The last inequality follows from the same calculation as is done in \eqref{eq:taylor}. Finally, by the choice of the finite set in \eqref{eq:choice of finite set} (specifically, the third member in the right hand side of \eqref{eq:choice of finite set}), we obtain
\begin{equation*}
    \mathbbm{E}_{\alpha}\Big[{\cal{L}}(X^S(1))-{\cal{L}}(X^S(0))\Big]\leq -\epsilon,
\end{equation*}
which concludes the proof.
\qed 

\section{Simulations}\label{sec:simulations}

\begin{figure*}

\quad\quad\quad\,\, \underline{\textbf{10 slow : 90 fast}} \quad\quad\quad\quad\quad\quad \underline{\textbf{50 slow : 50 fast}} \quad\quad\quad\quad\quad \underline{\textbf{90 slow : 10 fast}}\par\medskip
\centering
\begin{subfigure}
\centering
\begin{subfigure}{}
\includegraphics[width=0.32\linewidth]{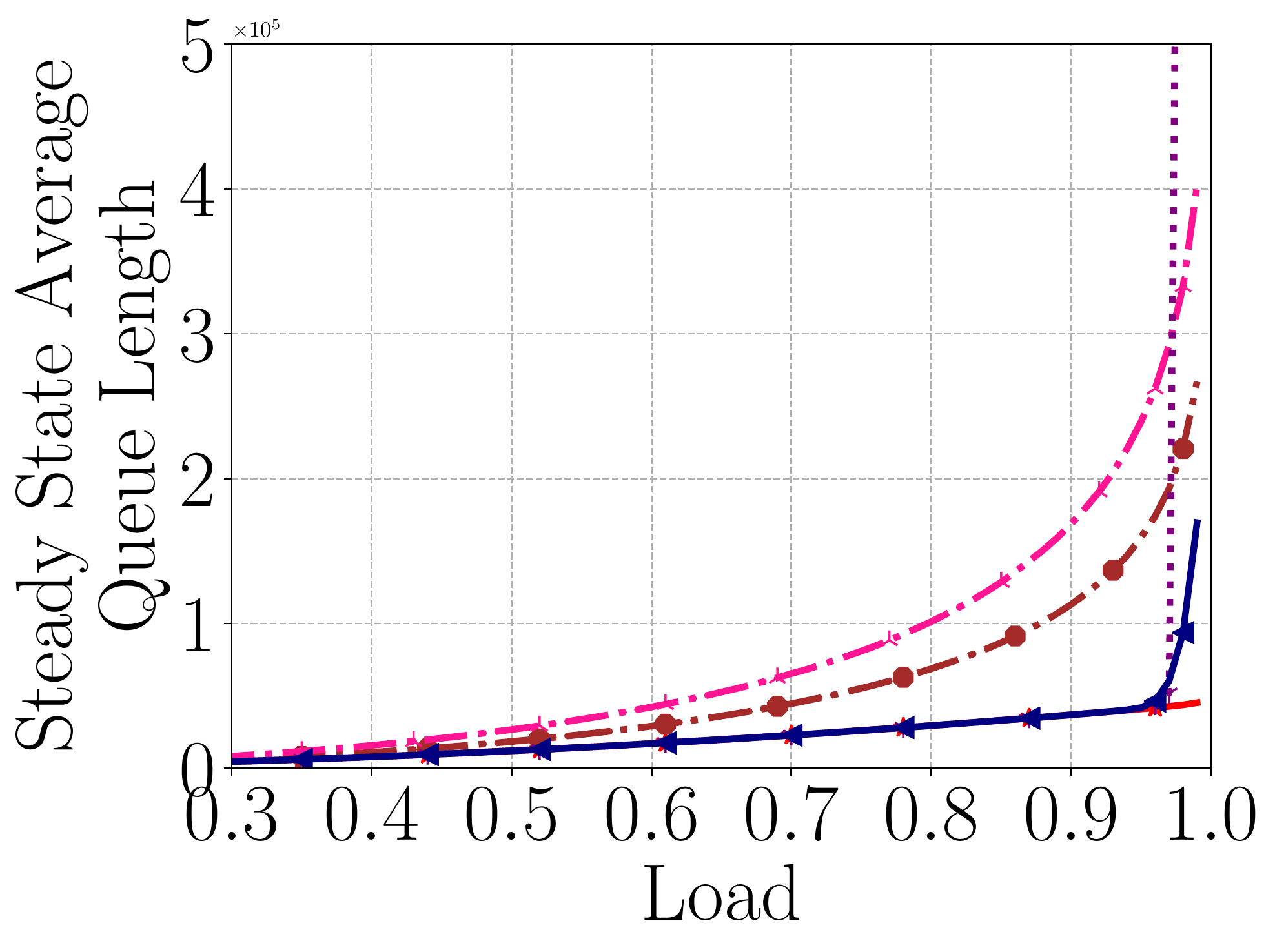}
\end{subfigure}%
\begin{subfigure}{}
\includegraphics[width=0.32\linewidth]{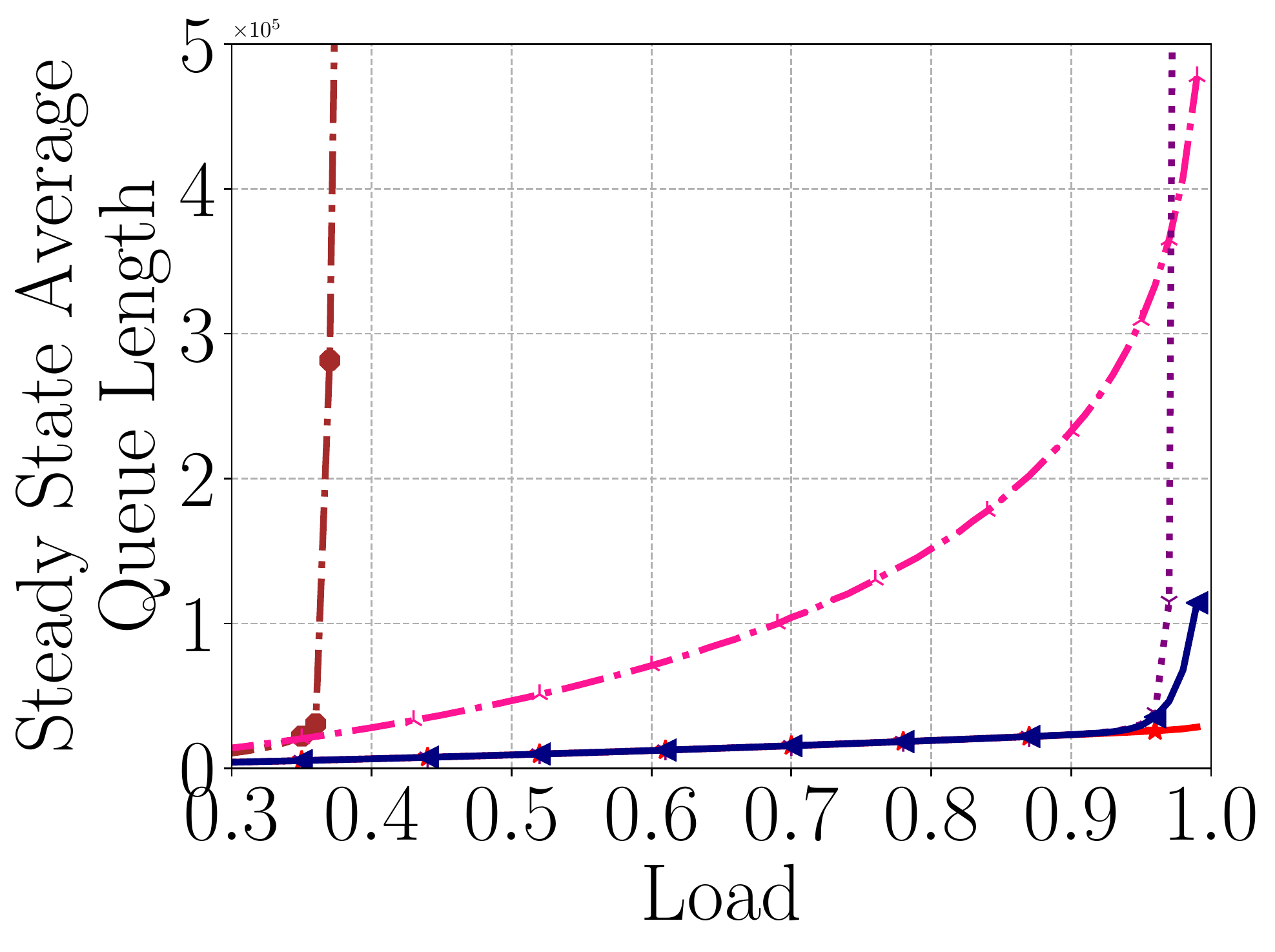}
\end{subfigure}%
\begin{subfigure}{}
\includegraphics[width=0.32\linewidth]{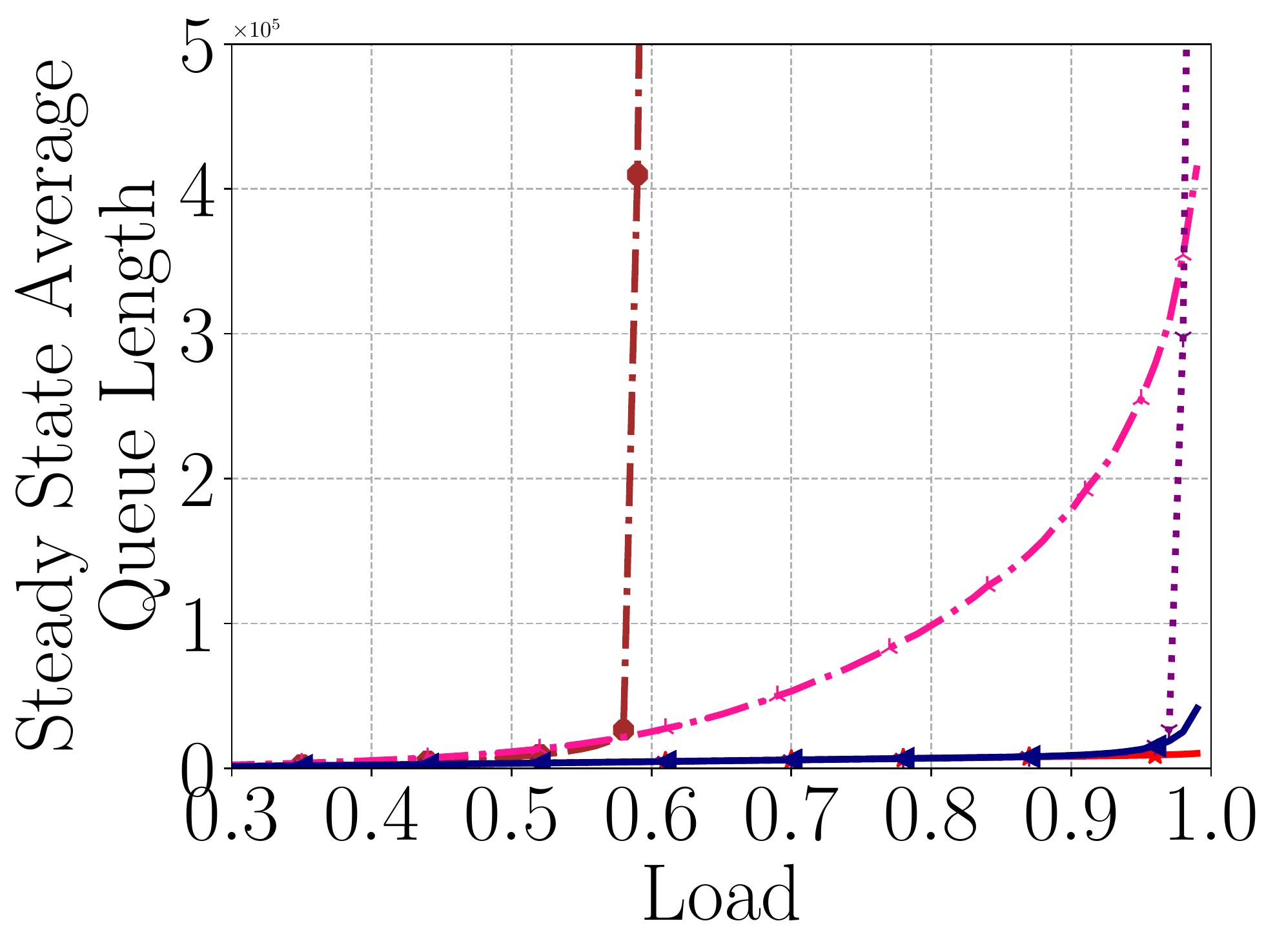}
\end{subfigure}%
\end{subfigure}
\begin{subfigure}
\centering
\begin{subfigure}{}
\includegraphics[width=0.32\linewidth]{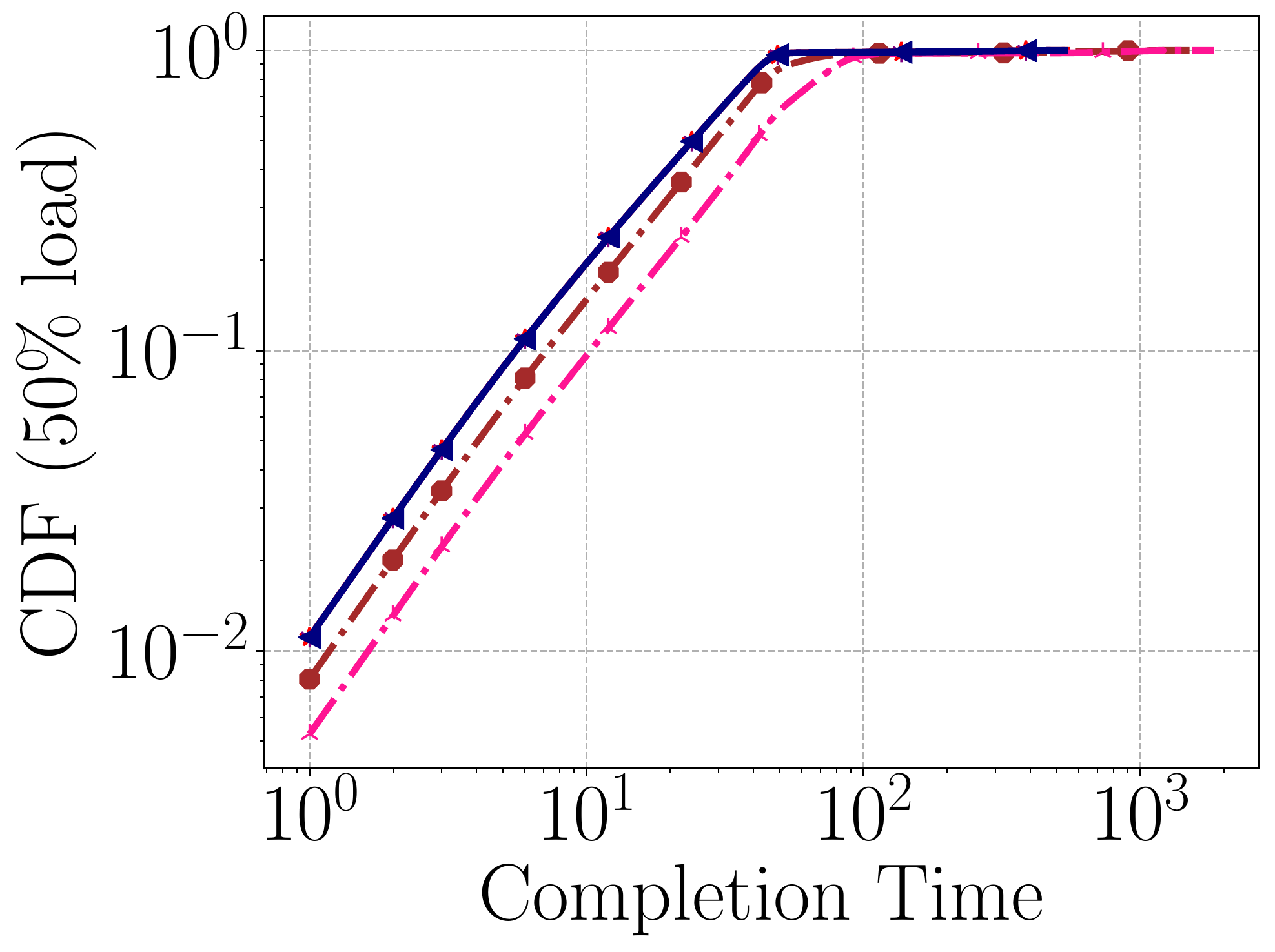}
\end{subfigure}%
\begin{subfigure}{}
\includegraphics[width=0.32\linewidth]{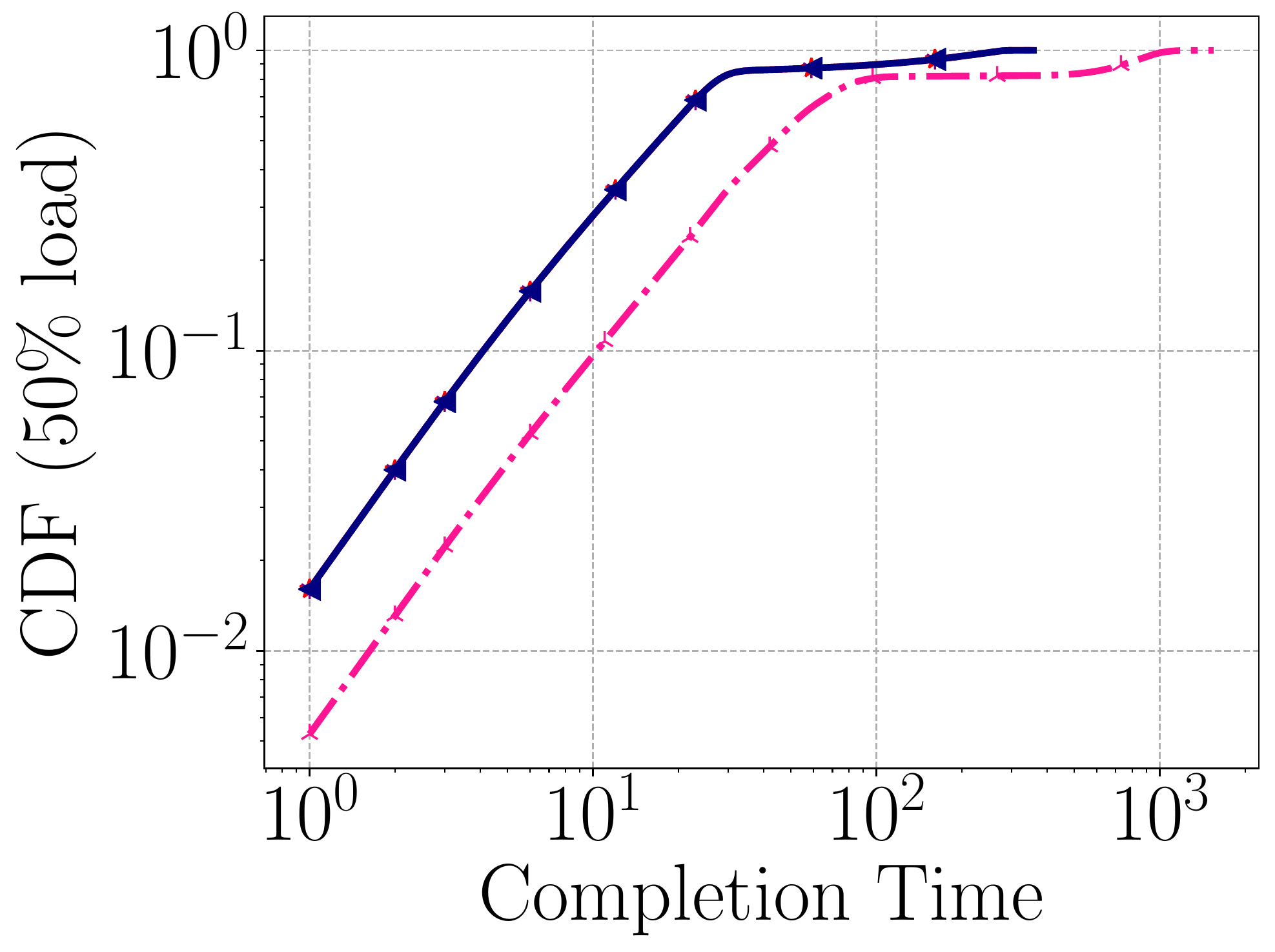}
\end{subfigure}%
\begin{subfigure}{}
\includegraphics[width=0.32\linewidth]{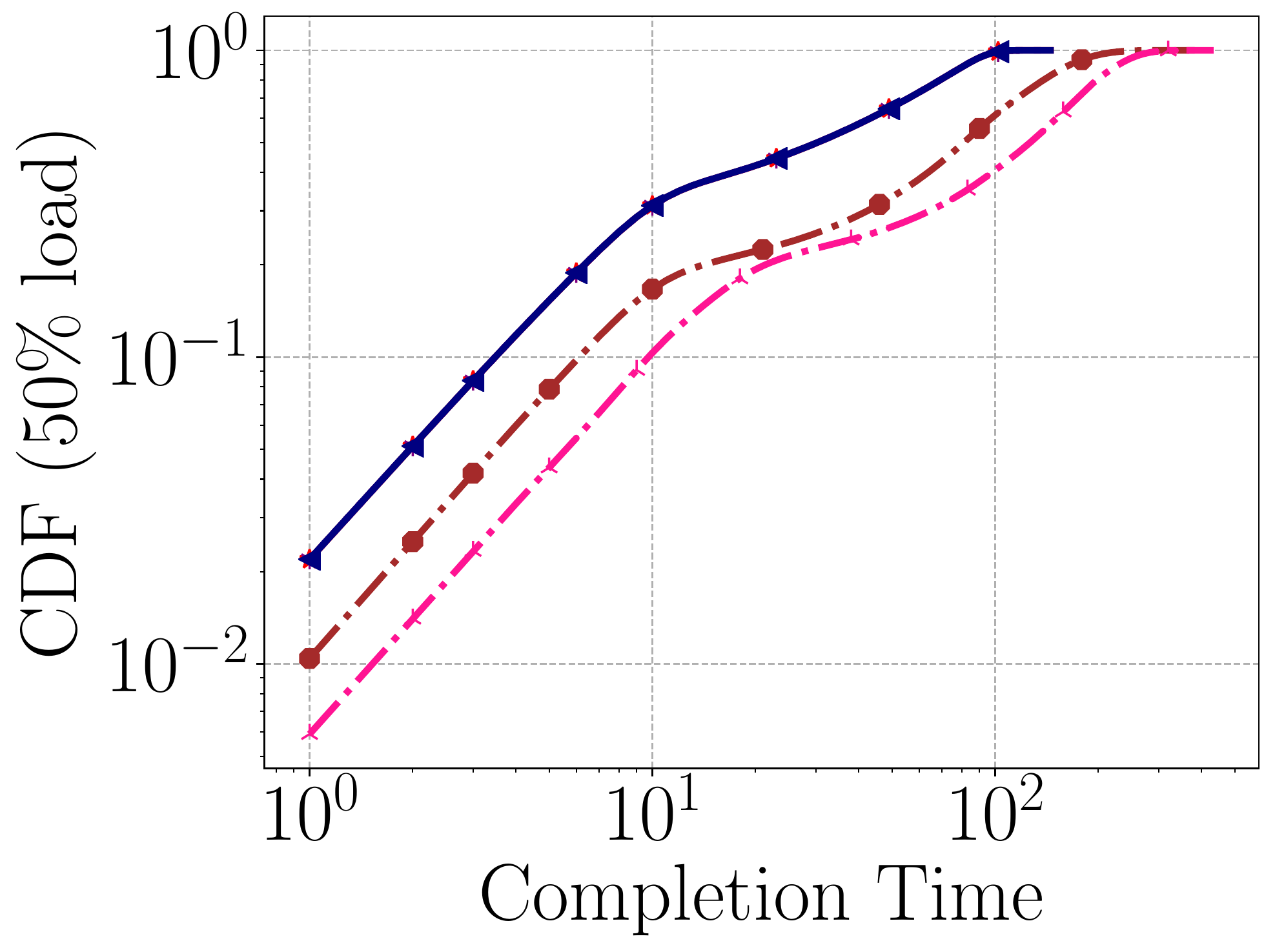}
\end{subfigure}%
\end{subfigure}
\begin{subfigure}
\centering
\begin{subfigure}{}
\includegraphics[width=0.32\linewidth]{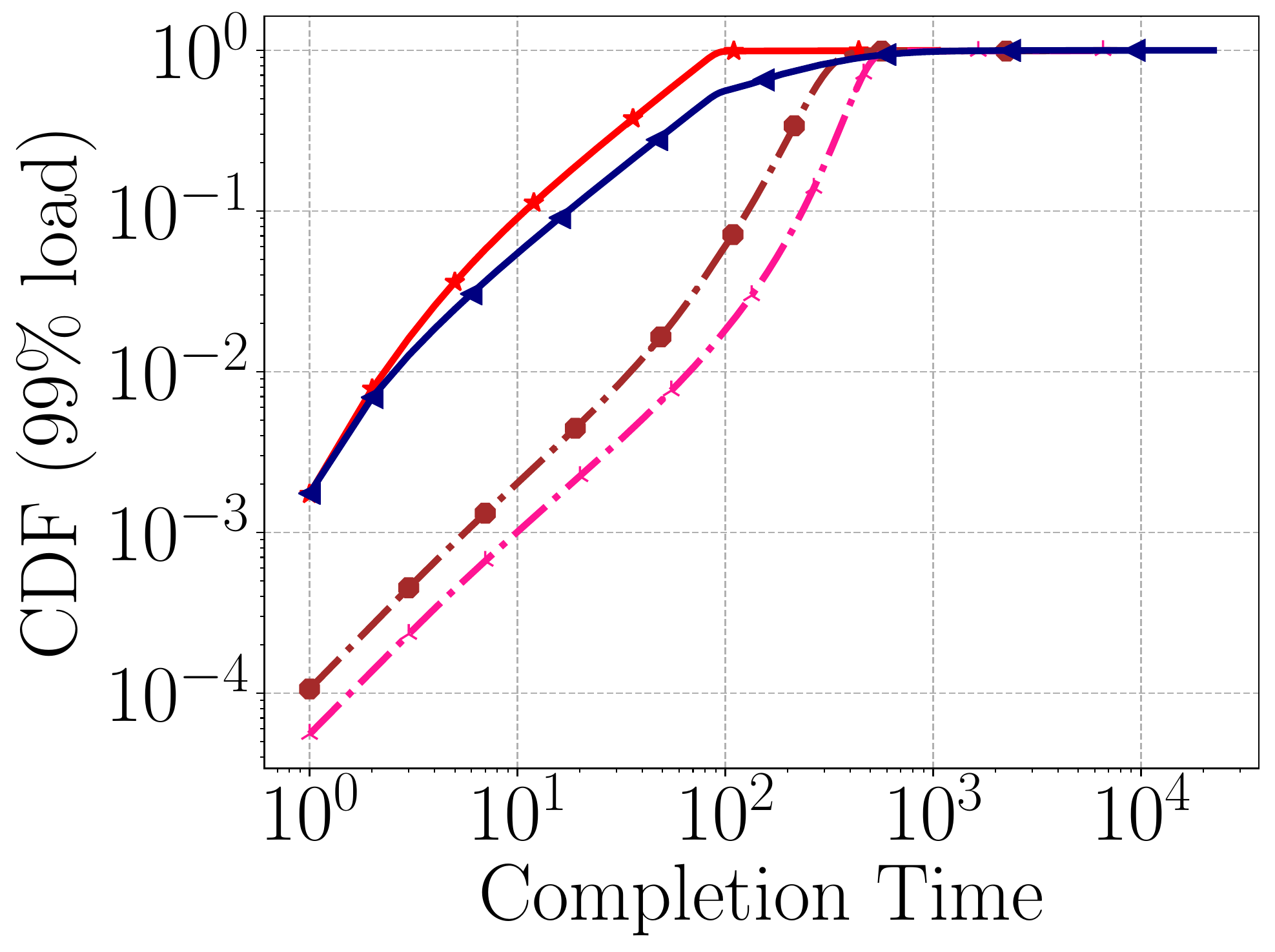}
\end{subfigure}%
\begin{subfigure}{}
\includegraphics[width=0.32\linewidth]{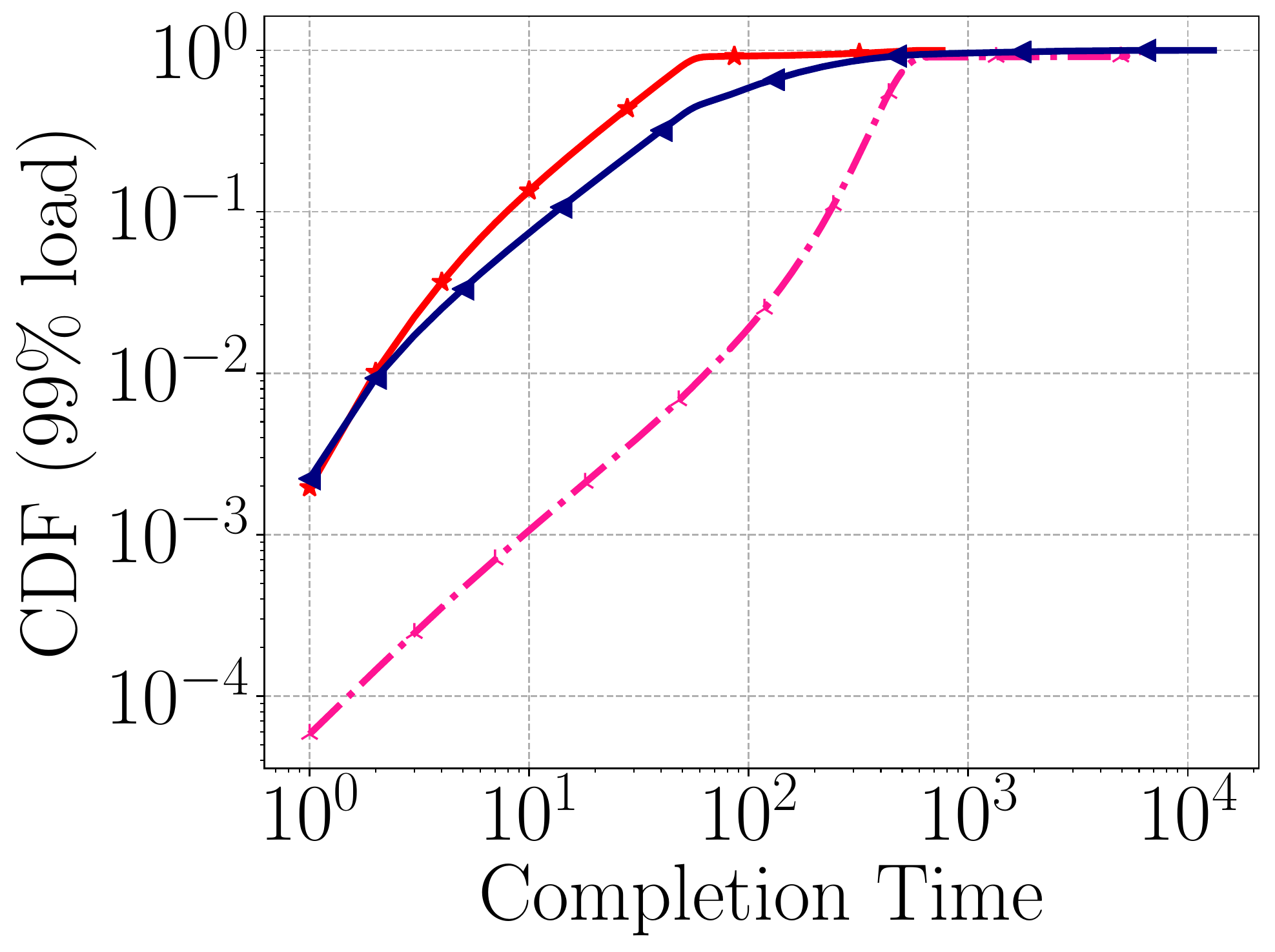}
\end{subfigure}%
\begin{subfigure}{}
\includegraphics[width=0.32\linewidth]{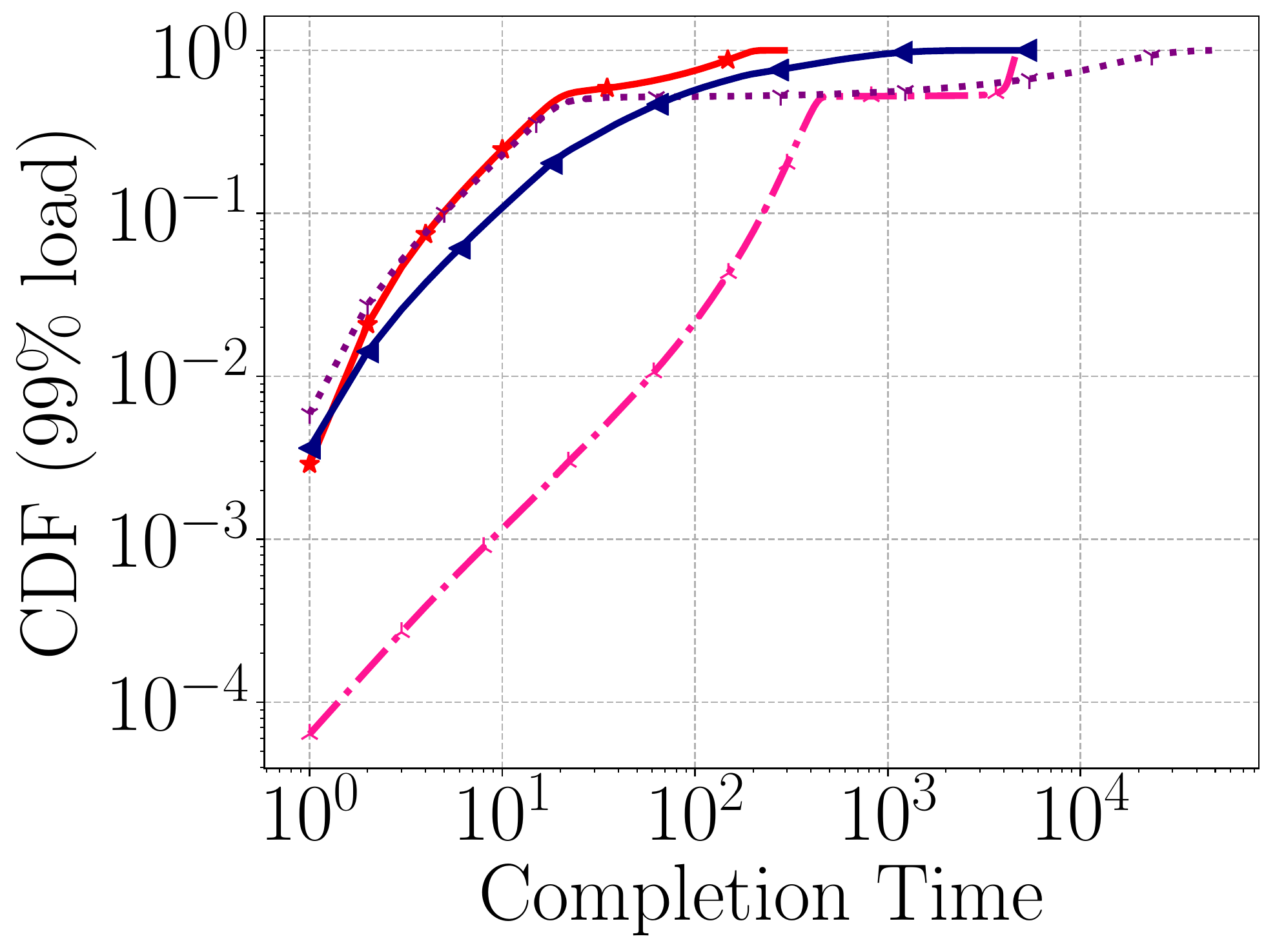}
\end{subfigure}%
\end{subfigure}
\begin{subfigure}
\centering
\begin{subfigure}{}
\includegraphics[width=0.32\linewidth]{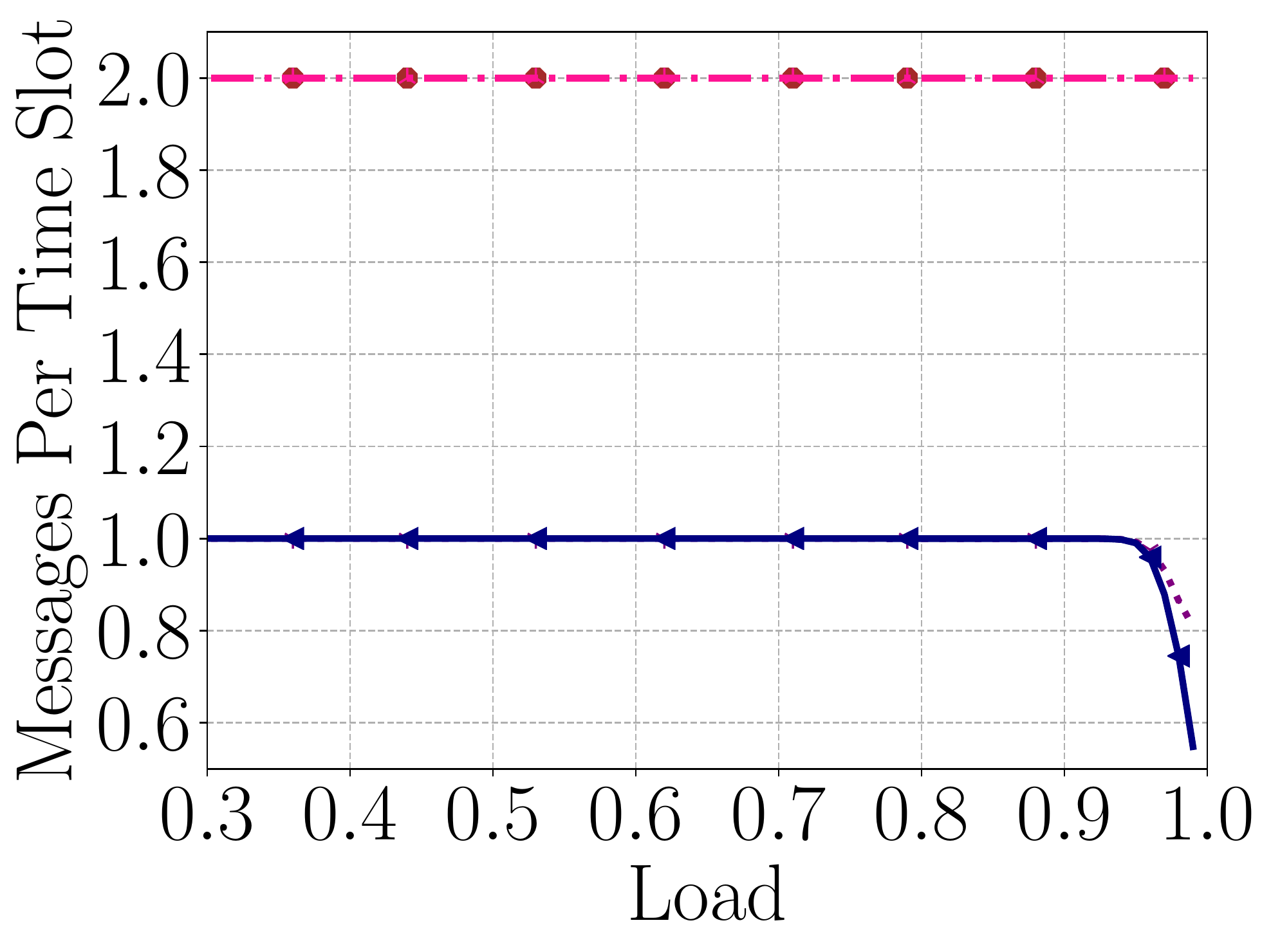}
\end{subfigure}%
\begin{subfigure}{}
\includegraphics[width=0.32\linewidth]{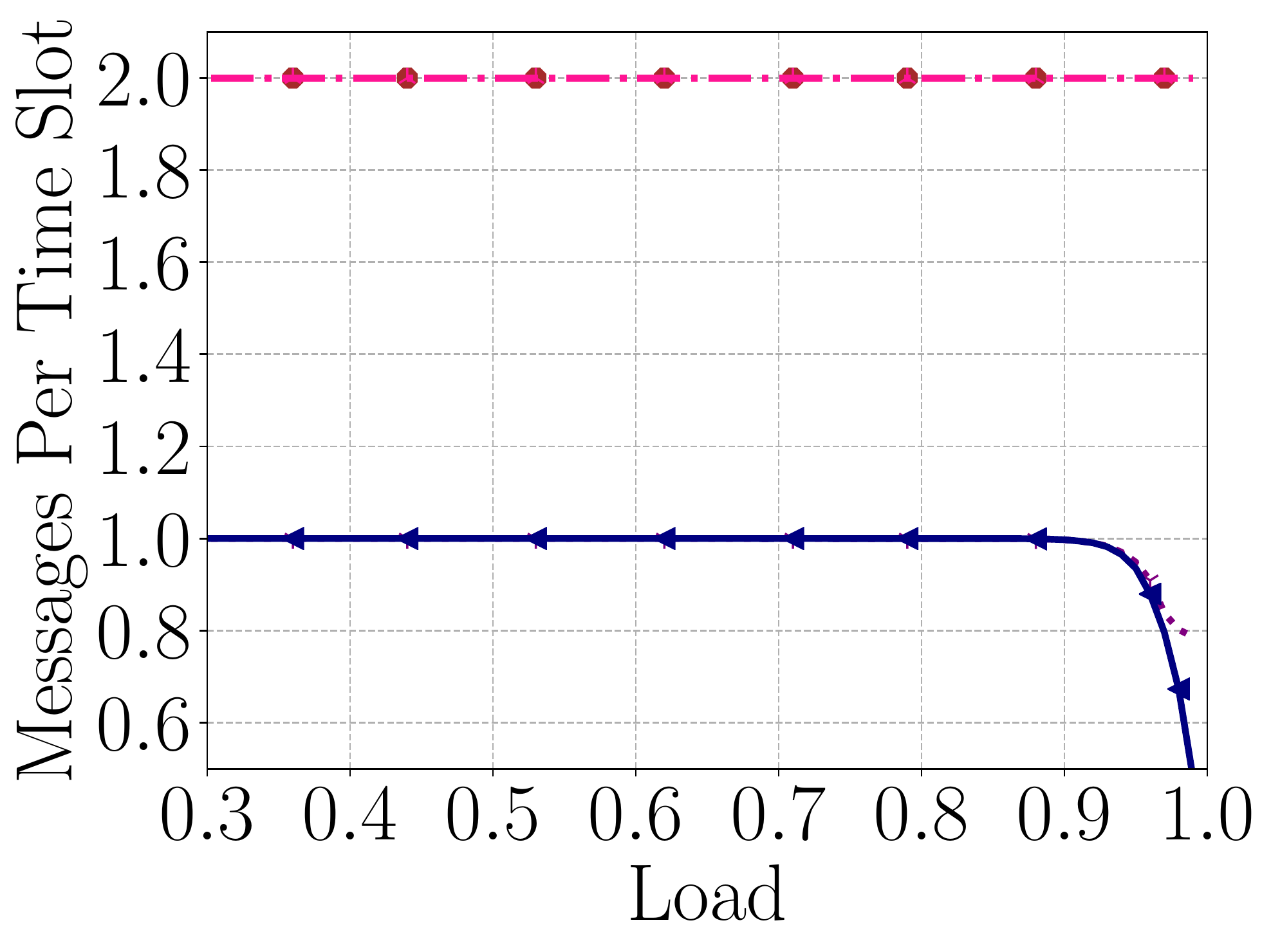}
\end{subfigure}%
\begin{subfigure}{}
\includegraphics[width=0.32\linewidth]{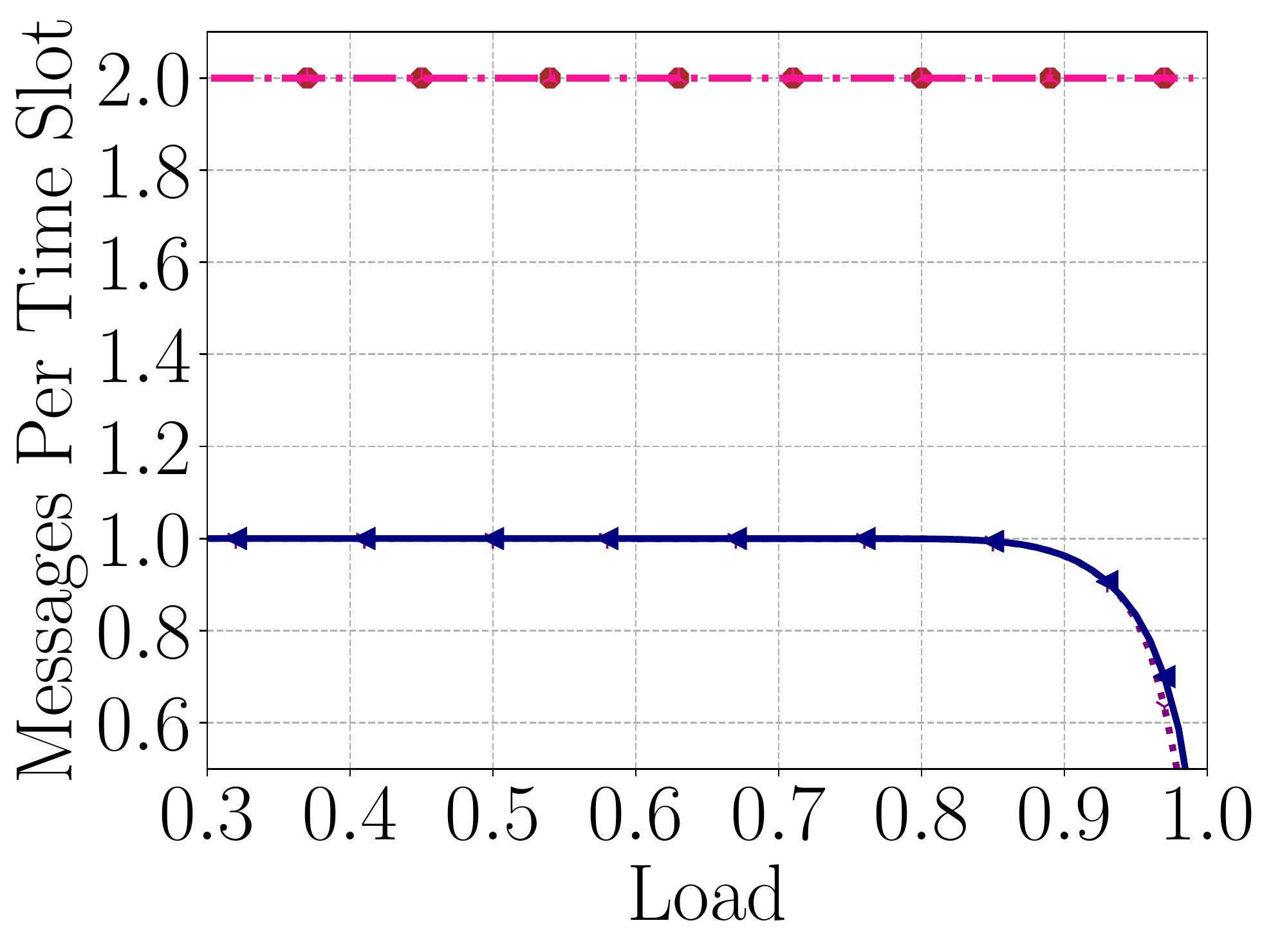}
\end{subfigure}%
\end{subfigure}
\begin{subfigure}{}
\includegraphics[width=0.9\linewidth]{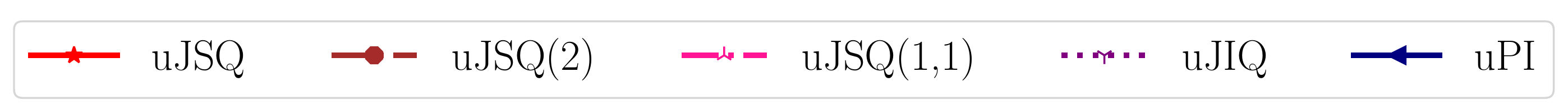}
\end{subfigure}%
\caption{Performance of different policies with 100 heterogeneous servers and different slow:fast server ratios. The policies we consider here cannot split arriving batches of jobs. We emphasize this by adding 'u' before each policy name (which stands for 'unsplittable').}
\label{fig:eval:unsplittable}
\end{figure*}

\begin{figure*}

\quad\quad\quad\,\, \underline{\textbf{10 slow : 90 fast}} \quad\quad\quad\quad\quad\quad \underline{\textbf{50 slow : 50 fast}} \quad\quad\quad\quad\quad \underline{\textbf{90 slow : 10 fast}}\par\medskip
\centering
\begin{subfigure}
\centering
\begin{subfigure}{}
\includegraphics[width=0.32\linewidth]{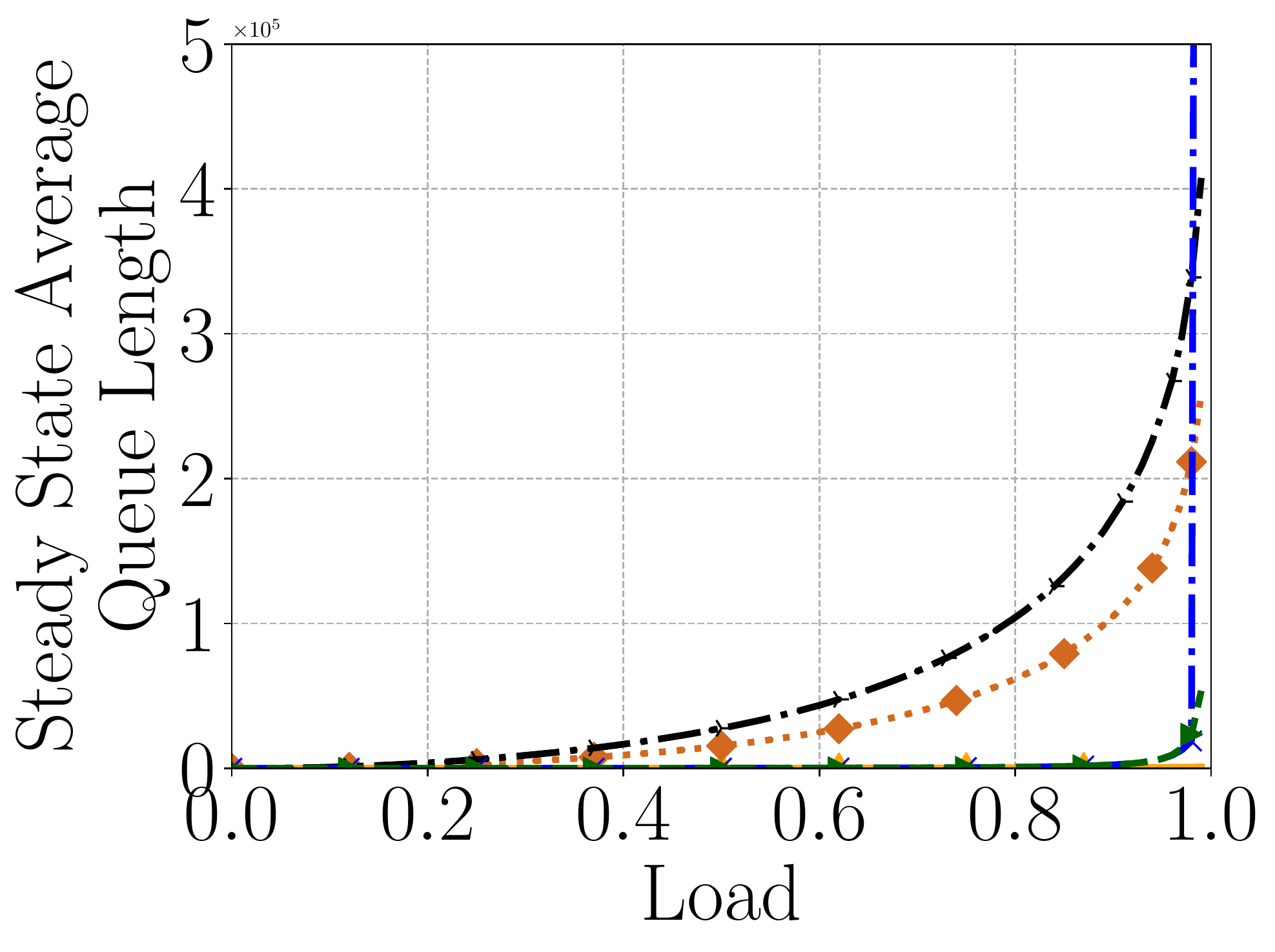}
\end{subfigure}%
\begin{subfigure}{}
\includegraphics[width=0.32\linewidth]{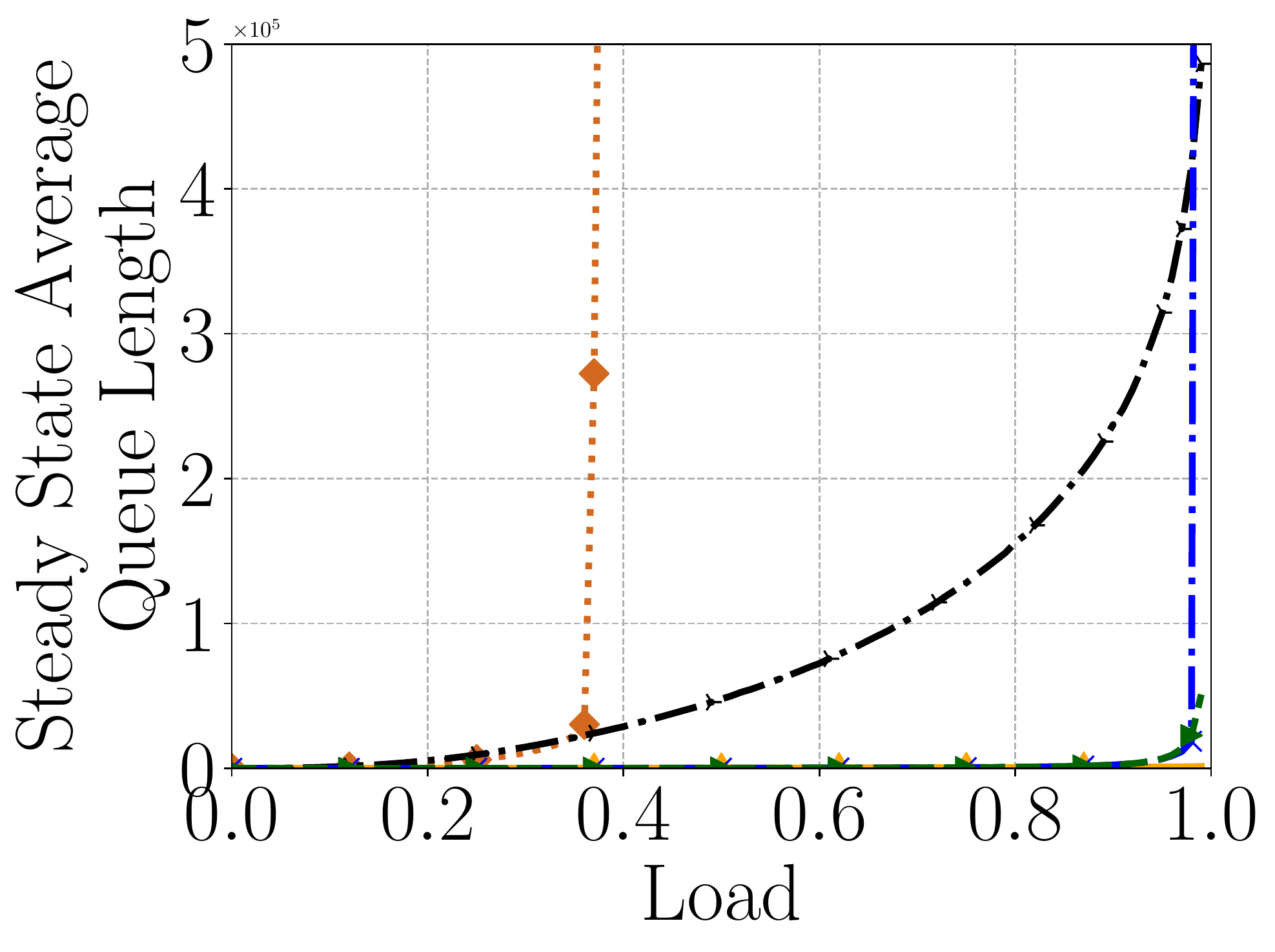}
\end{subfigure}%
\begin{subfigure}{}
\includegraphics[width=0.32\linewidth]{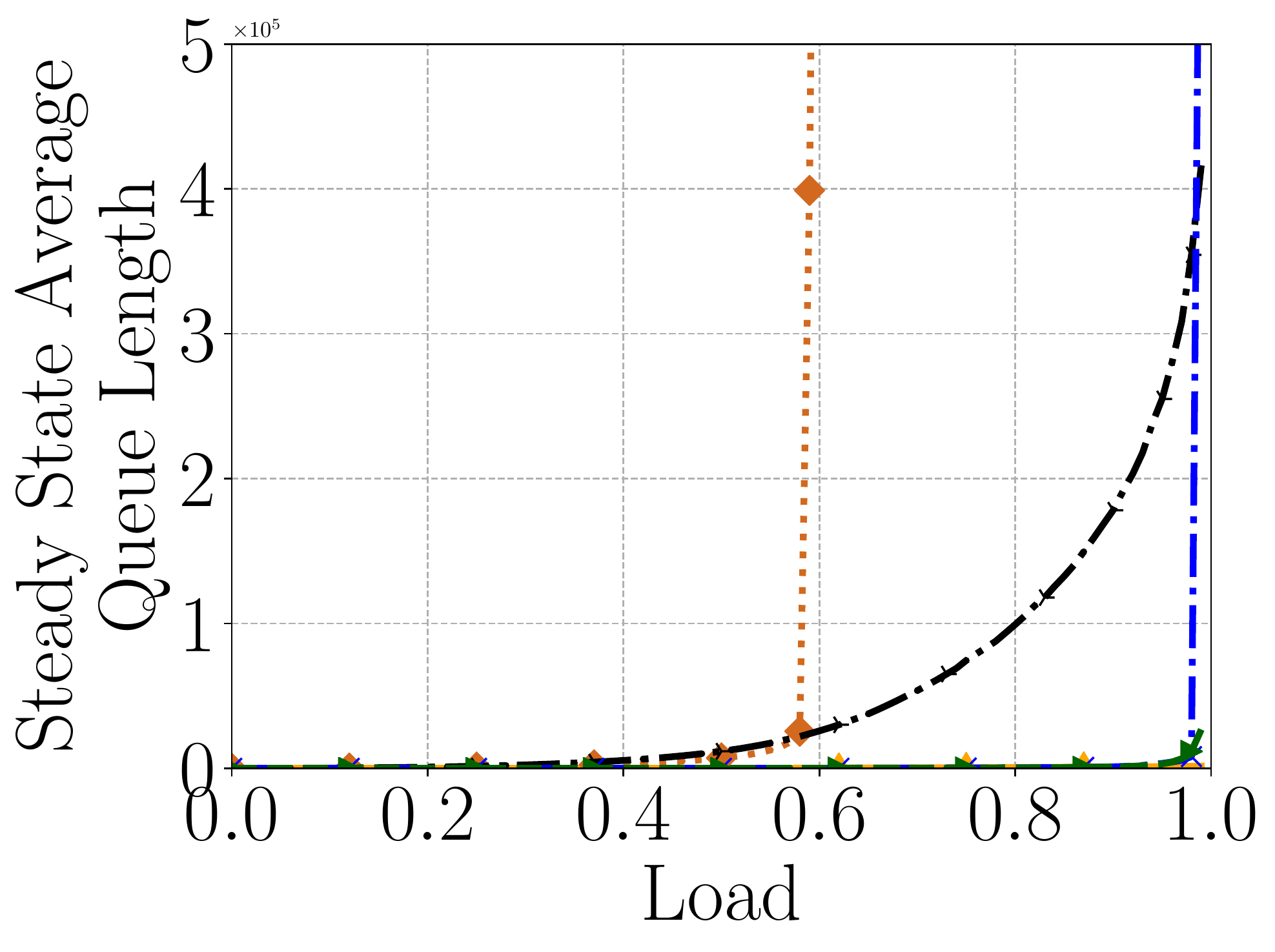}
\end{subfigure}%
\end{subfigure}
\begin{subfigure}
\centering
\begin{subfigure}{}
\includegraphics[width=0.32\linewidth]{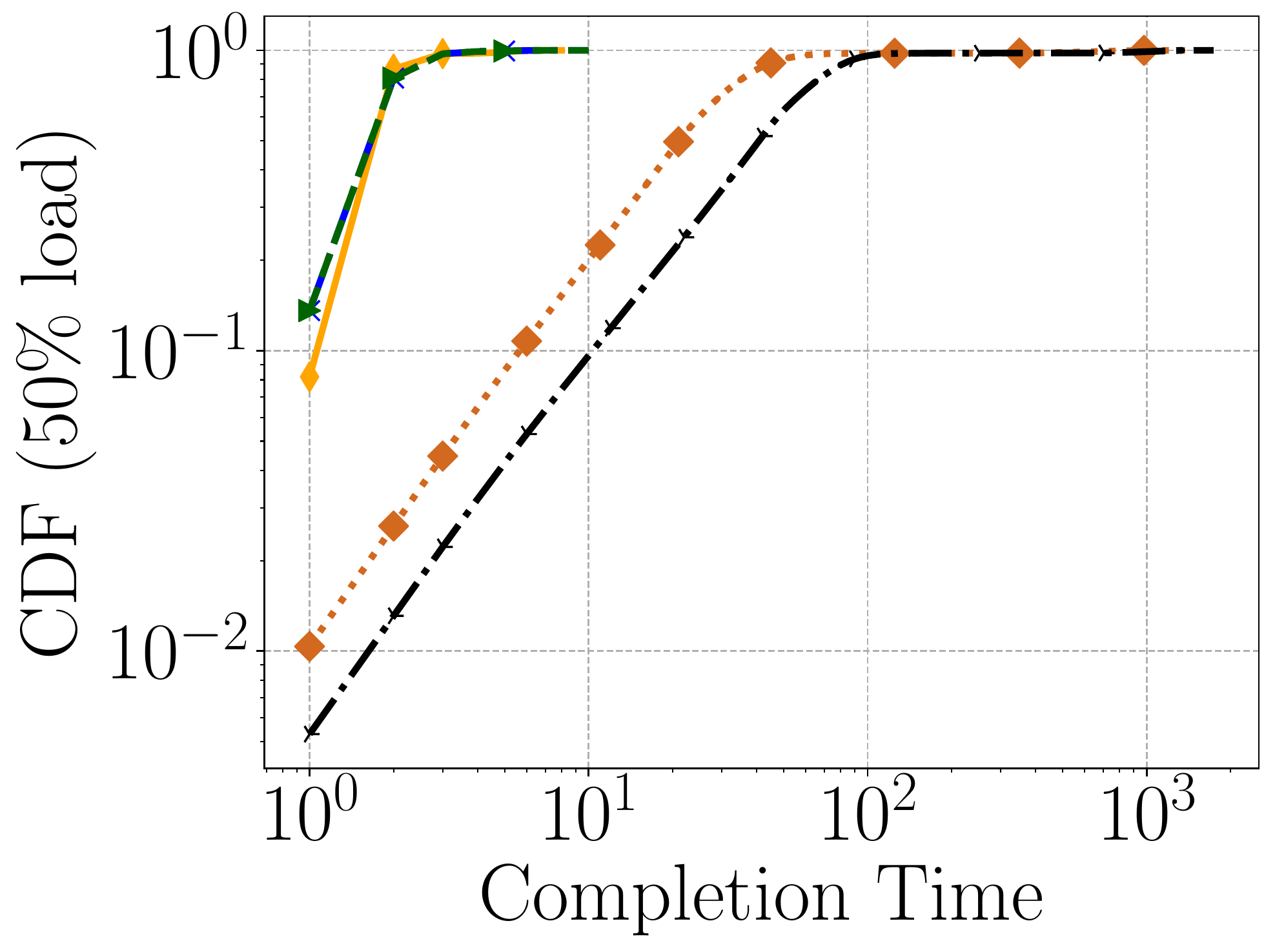}
\end{subfigure}%
\begin{subfigure}{}
\includegraphics[width=0.32\linewidth]{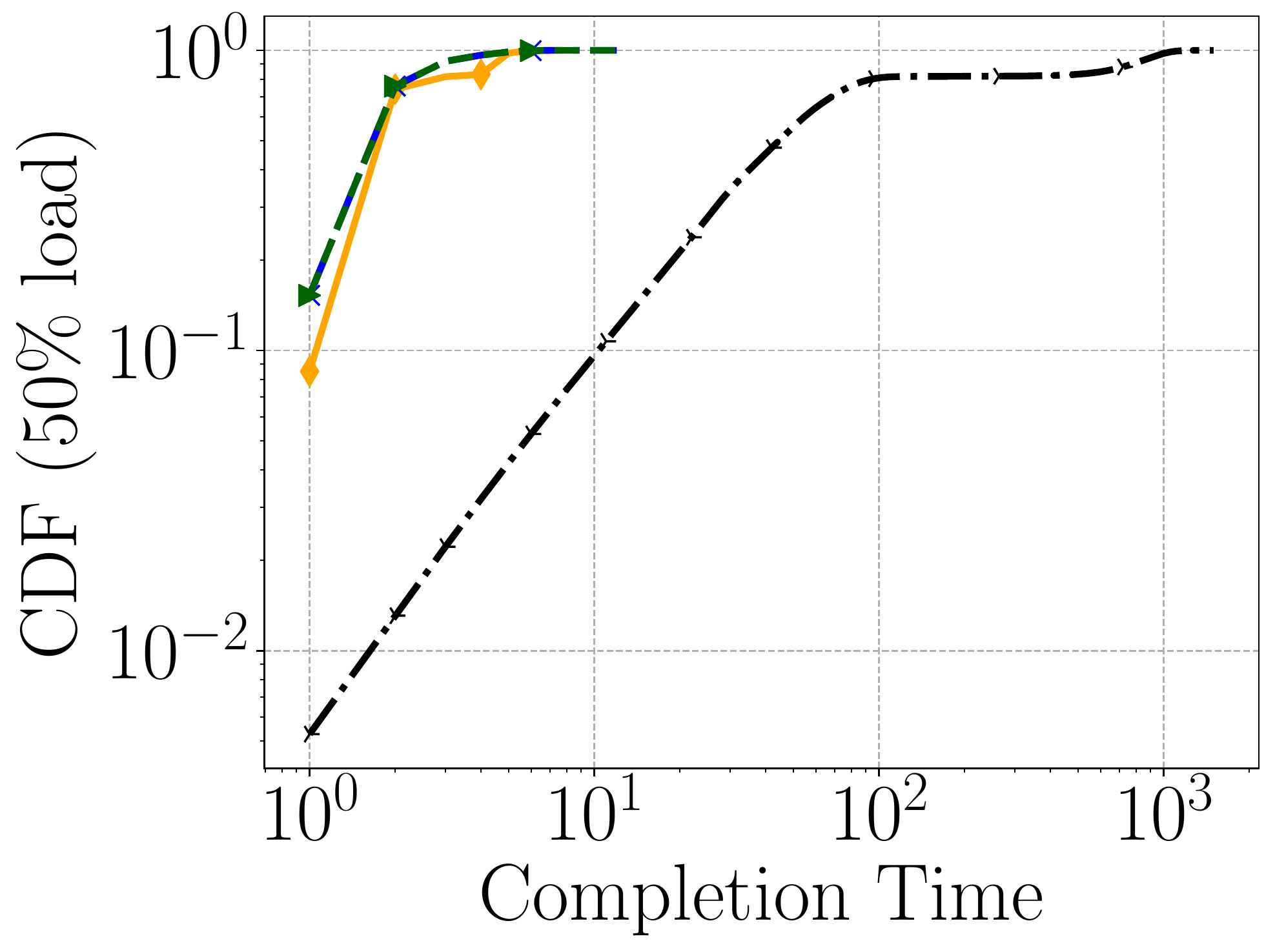}
\end{subfigure}%
\begin{subfigure}{}
\includegraphics[width=0.32\linewidth]{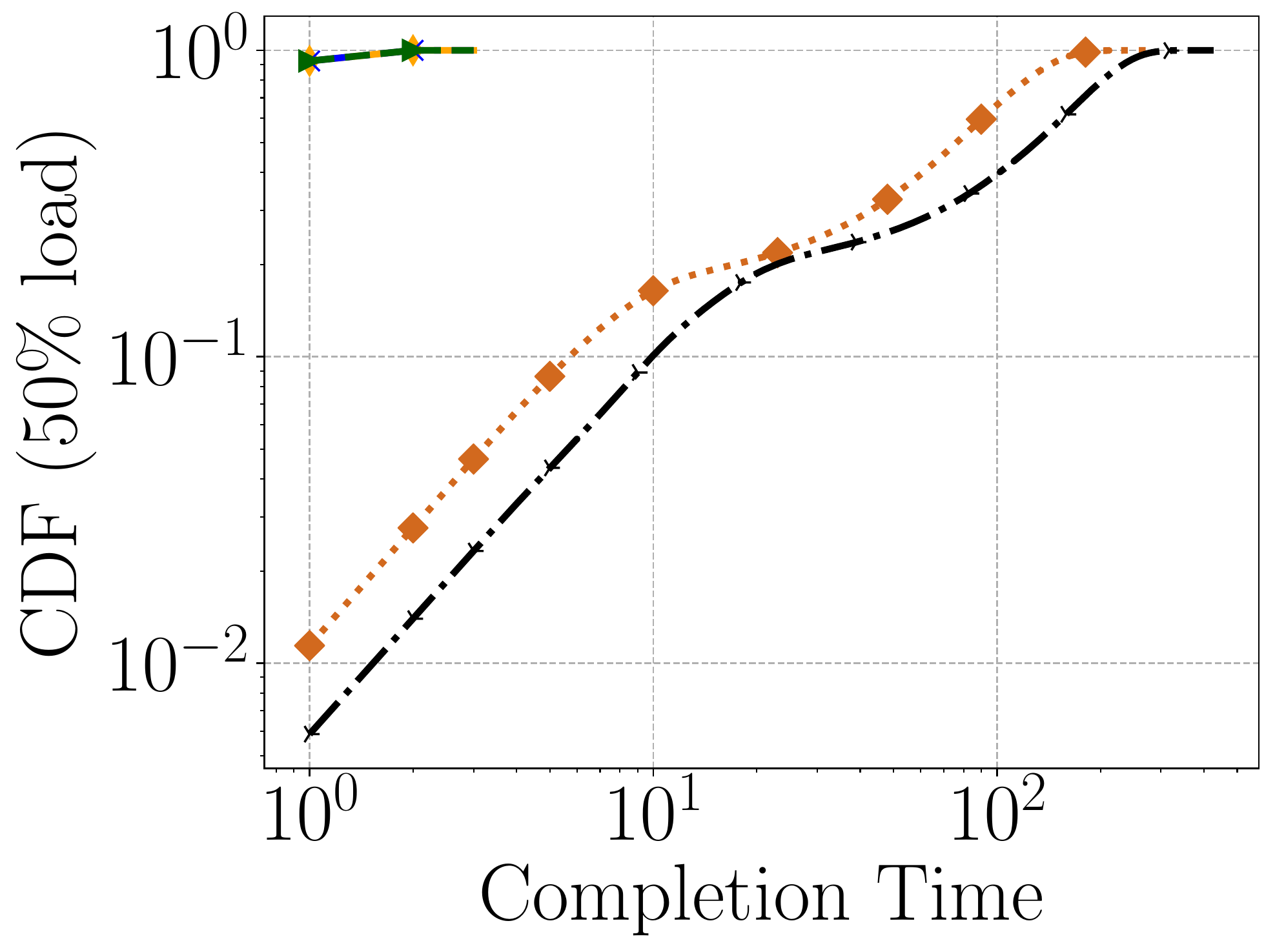}
\end{subfigure}%
\end{subfigure}
\begin{subfigure}
\centering
\begin{subfigure}{}
\includegraphics[width=0.32\linewidth]{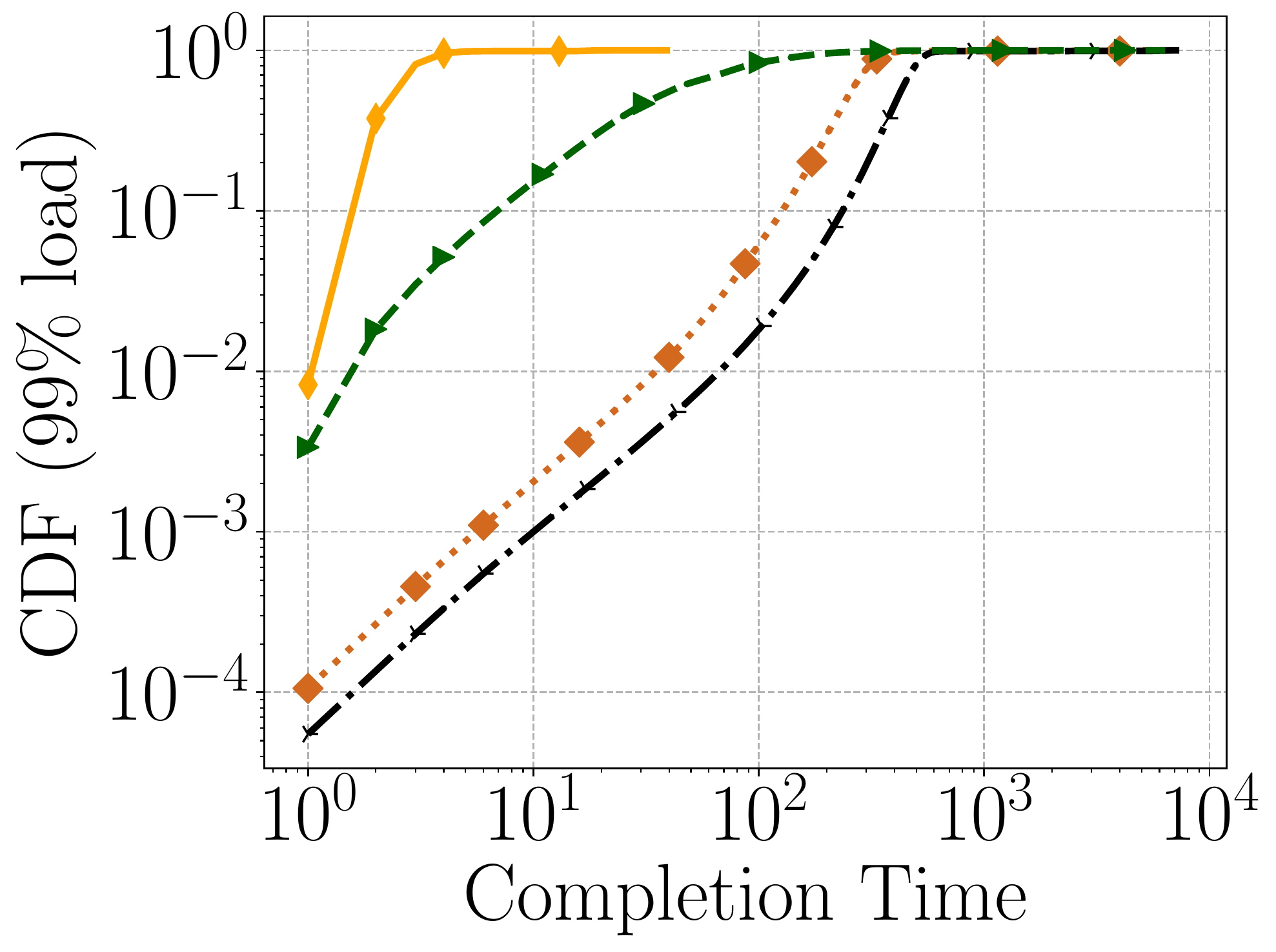}
\end{subfigure}%
\begin{subfigure}{}
\includegraphics[width=0.32\linewidth]{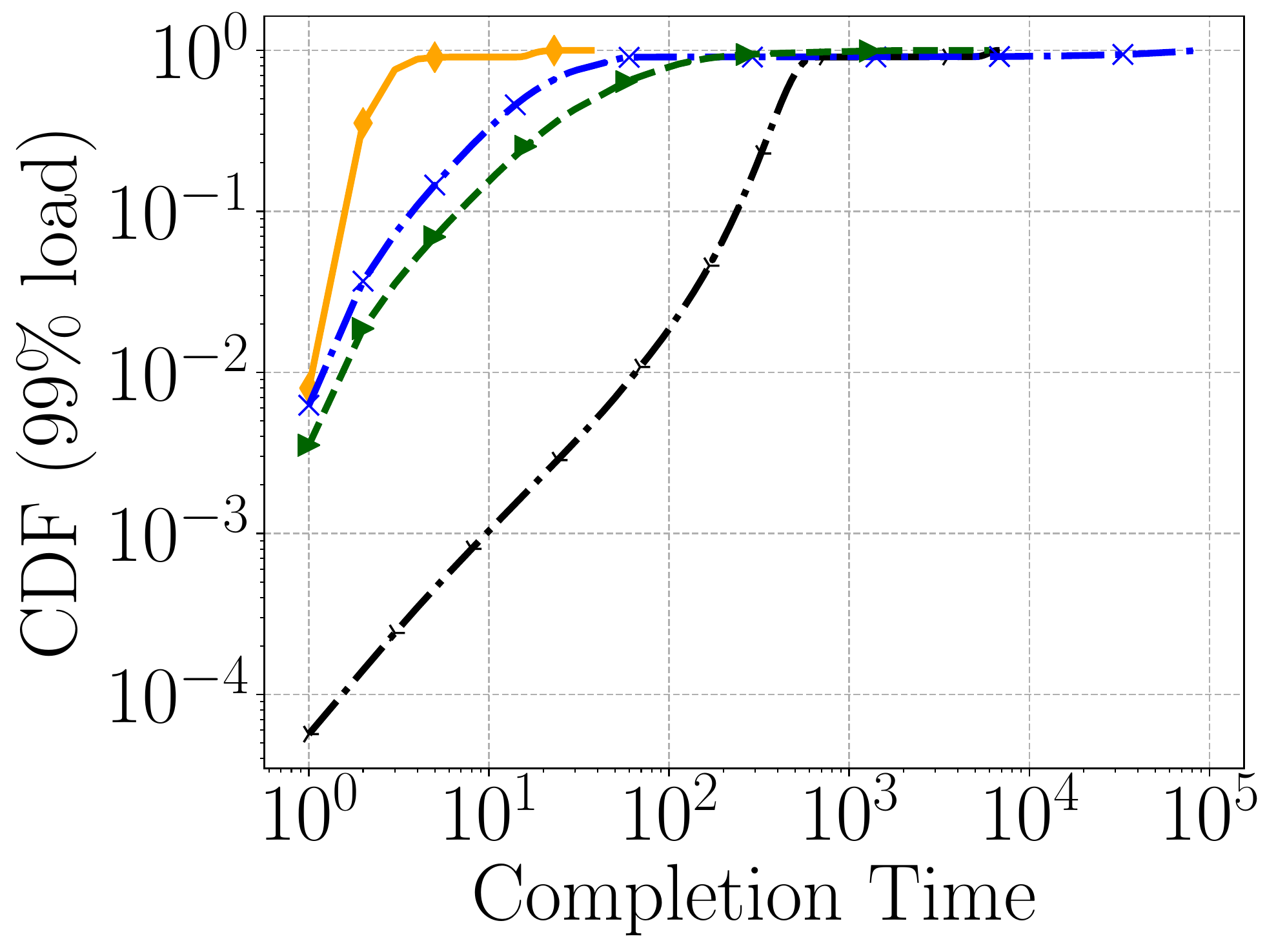}
\end{subfigure}%
\begin{subfigure}{}
\includegraphics[width=0.32\linewidth]{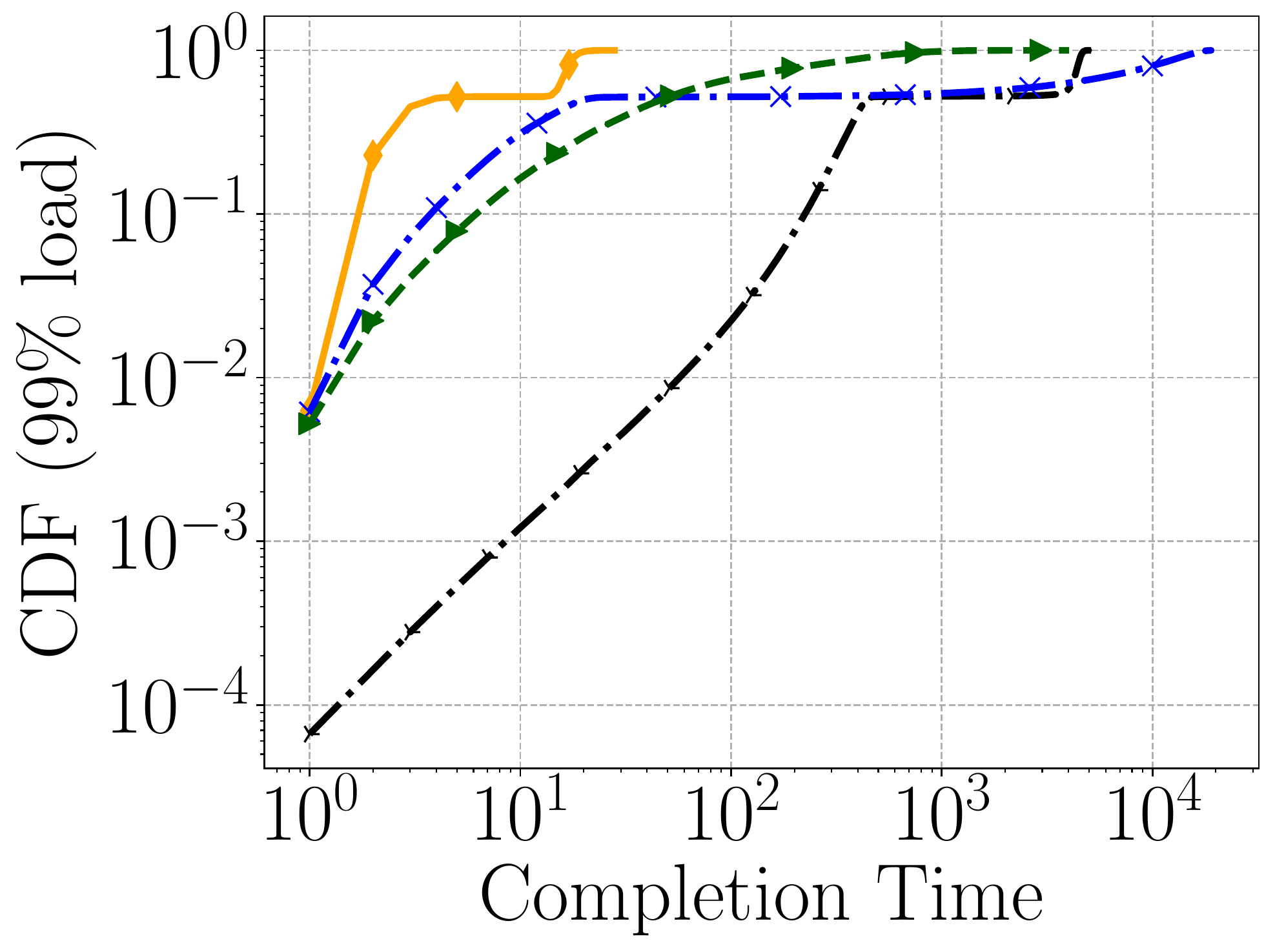}
\end{subfigure}%
\end{subfigure}
\begin{subfigure}
\centering
\begin{subfigure}{}
\includegraphics[width=0.32\linewidth]{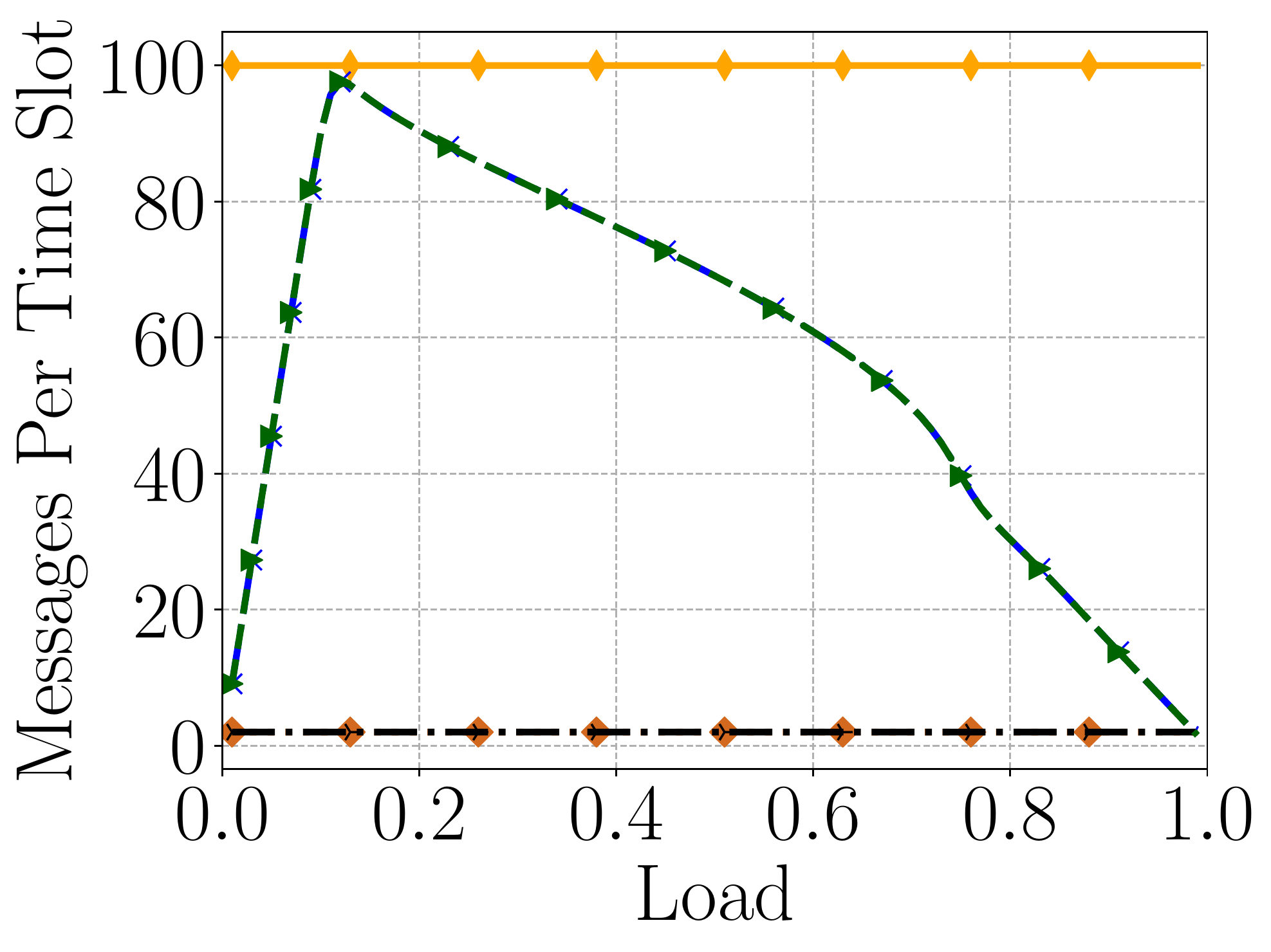}
\end{subfigure}%
\begin{subfigure}{}
\includegraphics[width=0.32\linewidth]{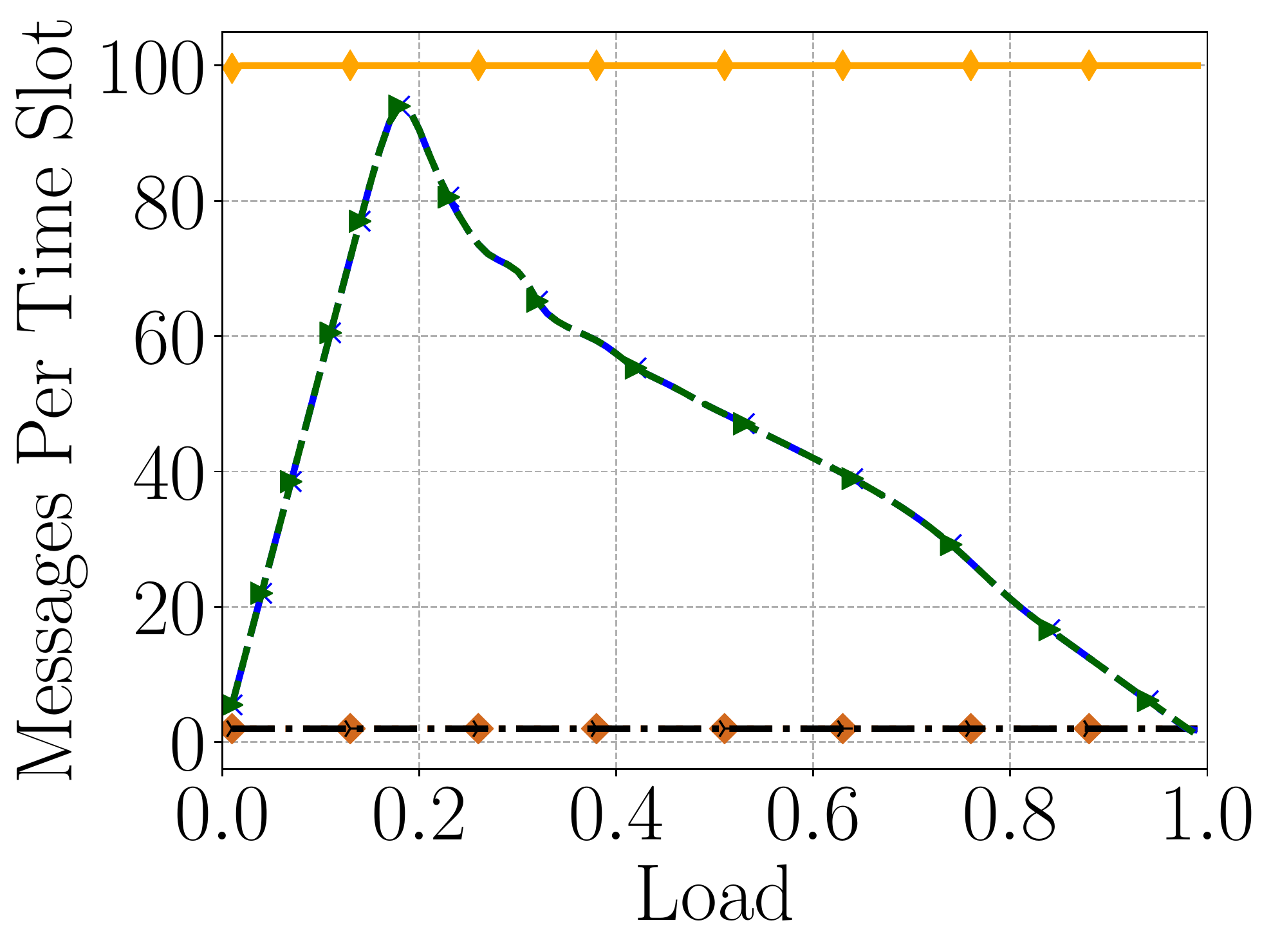}
\end{subfigure}%
\begin{subfigure}{}
\includegraphics[width=0.32\linewidth]{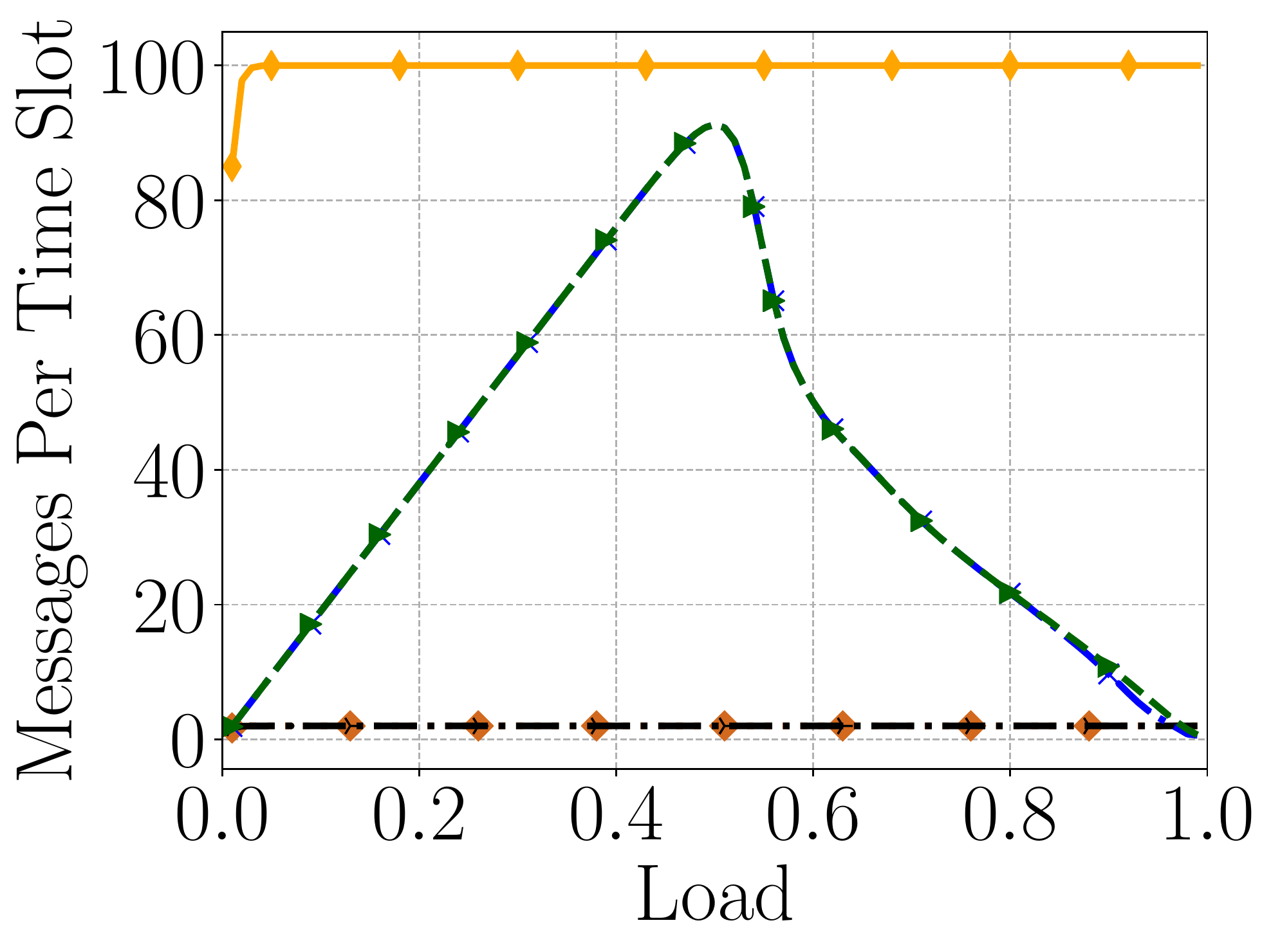}
\end{subfigure}%
\end{subfigure}
\begin{subfigure}{}
\includegraphics[width=0.9\linewidth]{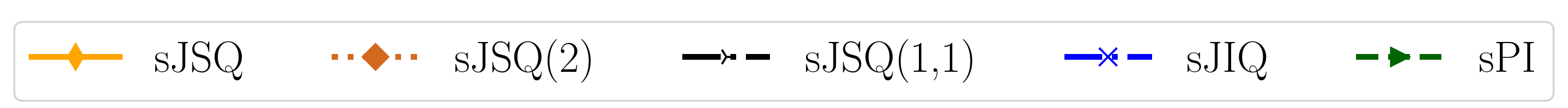}
\end{subfigure}%
\caption{Performance of different policies with 100 heterogeneous servers and different slow:fast server ratios. The policies we consider here can split arriving batches of jobs. We emphasize this by adding 's' before each policy name (which stands for 'splittable').}
\label{fig:eval:splittable}
\end{figure*}

The main purpose of this section is to complement the simulation results presented in \cite{PI1} for a model with batch arrivals and random service capacities. 
To this end, we compare the performance of PI and PI-Split with that of the token-based JIQ as well as several other load-distribution policies, namely JSQ, JSQ($2$) and JSQ($1$,$1$). 

We consider systems with 100 heterogeneous servers
and study two cases corresponding to whether arriving batches can be split or not.
When batches cannot be split, the aforementioned policies route the entire batch to a single queue. When batches can be split, it is not straight-forward how the policies should split them. There are many sensible ways in which this can be done. In our simulations we chose the following splitting schemes, based on the ``water-filling'' approach (see, for example, \cite{ying2017power}):

\begin{itemize}

    \item \textbf{JSQ:} Uses the water-filling approach where the jobs belonging to the batch are sent to the servers with the smallest queue lengths in an attempt to equalize them as much as possible. For example, if there are 3 servers with queue lengths $(1,2,4)$ and a batch arrives with 3 jobs, server 1 will get 2 jobs and server 2 will get 1 job, resulting in the queue lengths $(3,3,4)$. Ties are broken randomly.
    
    \item \textbf{JSQ(2):} Uses the same water-filling approach as JSQ, but only for the two servers that were chosen uniformly at random.
    
    \item \textbf{JSQ(1,1):} Same as JSQ(2), but for the ``remembered'' server and the one chosen uniformly at random.
    
    \item \textbf{JIQ:} Uses water-filling only for the idle servers. Specifically, a batch is split as evenly as possible between the idle servers. If there are no idle servers, each job in the batch is sent to a randomly chosen server.
    
    \item \textbf{PI:} If there are idle servers, the splitting is the same as in JIQ. Otherwise, the entire batch is sent to the last idle server.
    
\end{itemize}

As shall be seen throughout this section, simulation results indicate that PI is indeed stable and offers appealing properties in comparison to JIQ and other reduced-state policies. 

\subsection{Simulation settings} 
We consider a system with $100$ heterogeneous servers, some being slow servers, working at a rate $\mu^{-1} = 1$, and the others being fast servers, working at a rate $(10\mu)^{-1} = 1/10$. The service time distribution of all servers is uniform and lower bounded by 1. Note that these service time distributions concur with Assumption \eqref{eq:s assumption}).
We implement three scenarios, which correspond to different partitions of the servers into the two classes. More specifically, we consider the following slow:fast ratios: $10{:}90$, $50{:}50$ and $90{:}10$.  
The number of arrivals at each time slot is drawn from a Poisson process of rate $\lambda$. We are interested in values of $\lambda$ such that $0 < \lambda < \sum_{i=1}^n \mu_i$, \ie any strictly positive and sub-critical arrival rates.

For each scenario, we run simulations for $10^6$ time slots for different loads, where all policies receive the same input process, but possibly make different routing decisions. The number of time slots was chosen such that the difference in the outputs of different runs at the maximal load was negligible. We present the following graphs:

\T{Steady state average queue length vs. load.} For each simulation run with a given load, we calculate the time averaged sum of queue lengths in the system. 
Under all policies, whenever the chain is positive recurrent, it is also ergodic. Thus the average queue length converges, and one simulation run is enough to calculate the steady-state average queue length. Also, there are points that do not appear on the graphs since they are vertically truncated. However, since the truncation is at very high average queue lengths, these missing points indicate either poor performance or instability.  

\T{CDF of JCT in steady state for 50$\%$ and 99$\%$ load.} We keep track of the job completion times of all jobs that entered the system during the simulations, with $\lambda{=}0.5\sum_{i=1}^n\mu_i$ and $\lambda{=}0.99\sum_{i=1}^n\mu_i$. We calculate the cumulative distribution function (CDF). Policies that are unstable under a given load are not shown in the graph. 

\T{Messages per time slot vs. load.} For JIQ and PI, for each simulation run with a given load, we calculate the number of times a token is passed from a server to the dispatcher (the reverse path is not counted because the token is sent along with a job). We then divide this by the number of time slots to obtain the average number of messages per time slot. Counting messages for the other policies is more straightforward. At each time slot with non-zero arrivals, for JSQ it is the number of servers needed to be probed, \ie $100$. Similarly, for JSQ(2) and JSQ(1,1) it is $2$. 

\begin{remark}
Instead of calculating messages per time slot, one can choose to look at messages per batch, per sub batches (in the splittable case) or per job. However, measuring the number of messages per time slot is more informative for the amount of needed communication, whereas the other options are affected by the arrival and departure rates rather than by their ratio (\ie normalized load).
\end{remark}

\subsection{The unsplittable case: simulation results}

\T{Stability and average queue lengths.} The results are depicted in Figure \ref{fig:eval:unsplittable}. As can be seen, for all the different slow:fast ratios, PI appears to achieve the stability region. Also, the average queue length under PI is favorable to that of other reduced-state policies and even comparable to that of JSQ for low and moderate loads.
JSQ(1,1) also appears to achieve the stability region, but with a much larger average queue length. JSQ(2) appears to be unstable for moderately-high loads for the $50{:}50$ and $90{:}10$ ratios. 
Also, whenever JSQ(2) is stable, its average queue length is much higher than under PI. JIQ appears unstable for loads close to $100\%$ for all ratios. 

\T{Job completion time.} 
We measure the completion time of all jobs during the simulation at a moderate load of 50$\%$ and a high load of 99$\%$. It is evident how both PI and JIQ are competitive to JSQ at 50\% for all slow:fast ratios. This is because at such load there are many idle servers and both of these token-based policies resemble JSQ. JSQ(2) and JSQ(1,1) show lesser performance, and at a 50:50 ratio JSQ(2) is missing as its completion times diverge due to apparent instability.
At a load of 99$\%$, PI shows the best overall performance after JSQ. Both JIQ and JSQ(2) do not appear in 2 out of 3 scenarios due to what appears to be instability at this load. JSQ(1,1) again shows lesser performance.

\T{Messages per time slot.} Since the probability of zero arrivals in a time slot is negligible even at low loads (\ie $e^{-\lambda}$), the message overhead of JSQ, JSQ(2) and JSQ(1,1) is approximately constant. It is 100 for JSQ and 2 for JSQ(2) and JSQ(1,1). 
On the other hand, the number of messages per time slot for PI and JIQ is 1 up to a high load. As the load increases, there is an increasing number of time slots without any servers becoming idle and the message overhead for both policies decreases.


\subsection{Splittable case: simulation results}
 
\T{Stability and average queue length.} The results are depicted in Figure \ref{fig:eval:splittable}. As can be seen, the results show a similar trend to those of the unsplittable case. It is also evident how the advantage of JSQ in comparison to the other policies increases.   

\T{Job completion time and Messages per time slot.} In the splittable case, the communication overhead of token based policies has different characteristics than in the unsplittable case. This is because in the splittable case, the overhead may reach 1 message per job rather than 1 message per batch of jobs. Indeed, this would be the case for moderate loads at which we have both a sufficient number of jobs in each batch and they are split among several/many idle servers. 

As can be seen from the graphs, at such moderate loads, both PI ans JIQ are competitive with JSQ but with often lower message overhead. JSQ(2) and JSQ(1,1) perform significantly worse but also have a lower message overhead in this case.
At the high load of 99$\%$ the message overhead of the token based policies rapidly decreases due to the decreased number of idled servers. In this case the results are similar to those of the unspittable case with apparent instability of JSQ(2) in 2 out of 3 cases and JIQ in 1 out of 3. JSQ(1,1) appears to be stable but again performs poorly.  

\begin{remark}
We have found in our simulations that the stability region of JIQ increases when batches are allowed to be split. To the best of our knowledge, this phenomenon was not pointed out previously. We remark it as a future research challenge to provide formal analysis for this phenomenon and explore how it can be leveraged to improve the performance of token based policies.
\end{remark}

\section{Appendix}
\noindent \textbf{Proof of Equation \eqref{eq: RRW bound}}. To reiterate \eqref{eq: RRW bound}, we prove that 
\begin{equation}\label{eq: appendix RRW bound}
    \mathbb{E}\Big[ \Big({Q_{l_k}(\tau_{k+1})}\Big)^{1+\gamma}\mid {\cal{F}}^{\xi}_{\tau_k}\Big]\leq ([\lambda-\mu_{l_k}]^+)^{1+\gamma}\mathbb{E}[\Delta_k^{1+\gamma}\mid {\cal{F}}^{\xi}_{\tau_k}]+
    C\mathbb{E}[\Delta_k^{0.75(1+\gamma)}\mid {\cal{F}}^{\xi}_{\tau_k}],
\end{equation}
where $l_k \in [n]$ is the Last-Idle server during the interval $[\tau_k+1,\tau_{k+1}]$, $\Delta_k=\tau_{k+1}-\tau_k$ is the duration of the interval and $Q_{l_k}(\tau_{k+1})$ is the queue length at server $l_k$ at time $\tau_{k+1}$. Recall that the RVs $l_k$, $\tau_k$ and $\{Q_i(\tau_k)\}_{i\in[n]}$ are measurable w.r.t ${\cal{F}}^{\xi}_{\tau_k}$, and that by the definition of the sampling times, we must have $Q(\tau_k)=0$ and that server $l_k$ receives all incoming jobs during $[\tau_k+1,\tau_{k+1}]$.

The left hand side of \eqref{eq: appendix RRW bound} is therefore the $1+\gamma$ moment of the position, after $\Delta_k$ time slots, of a reflected random walk starting at zero. Recall the definition in \eqref{eq: beta and sigma} of $\beta_{i}:=\lambda-\mu_{i}$ and $\sigma_{i}^2:=\sigma_a^2+\sigma_{s_{i}}^2$. Given the value of $l_k$, the mean and variance of the step size of this random walk are given by $\beta_{l_k}$ and $\sigma_{l_k}^2$ respectively. 

Lemma 6 of \cite{PI1} provides an upper bound on the $1+\gamma$ moment of a very similar reflected random walk. There are two differences between that result and what we want to prove here. First, the random walk in \cite{PI1} is of the workload process, not the queue length process as we consider here. Specifically, the distribution of the step size has a different dependence on the model parameters. However, the proof in \cite{PI1} only uses the mean and variance of the step size. Therefore, the same derivation is applicable here and we just plug in our mean and variance. 

Second, the bound in \cite{PI1} is derived for \textit{a fixed number of steps}. In our case, given ${\cal{F}}^{\xi}_{\tau_k}$, the number of steps is \textit{random}. This is due to the random service capacities. But, the number of steps until the next sampling time is independent of the random walk at server $l_k$. It only depends on the other servers and how many time slots it takes one of their queue lengths to reach zero. 

Thus, a direct application of Lemma 6 in \cite{PI1} (after plugging in $\beta_{l_k}$ and $\sigma_{l_k}$) yields
\begin{equation*}
    \mathbb{E}\Big[ \Big({Q_{l_k}(\tau_{k+1})}\Big)^{1+\gamma}\mid {\cal{F}}^{\xi}_{\tau_k}, \Delta_k=t\Big]\leq ([\beta_{l_k}]^+)^{1+\gamma}t^{1+\gamma}+
    Ct^{0.75(1+\gamma)},
\end{equation*}
where 
$$C=\max_i\{\big(16\sigma_i+8(\sigma_i)
^{1/2}(\beta_i)^+\big)^{(1+\gamma)/2}\}.$$
Thus, we have
\begin{align*}
    \mathbb{E}\Big[ \Big({Q_{l_k}(\tau_{k+1})}\Big)^{1+\gamma}\mid {\cal{F}}^{\xi}_{\tau_k}\Big]&=
    \sum_{t=1}^\infty \mathbb{E}\Big[ \Big({Q_{l_k}(\tau_{k+1})}\Big)^{1+\gamma}\mid {\cal{F}}^{\xi}_{\tau_k}, \Delta_k=t\Big]\mathbbm{P}(\Delta_k=t \mid  {\cal{F}}^{\xi}_{\tau_k} )\cr
    &\leq     \sum_{t=1}^\infty \Big( ([\beta_{l_k}]^+)^{1+\gamma}t^{1+\gamma}+
    Ct^{0.75(1+\gamma)}\Big)\mathbbm{P}(\Delta_k=t \mid  {\cal{F}}^{\xi}_{\tau_k} )
    \cr
    &=([\lambda-\mu_{l_k}]^+)^{1+\gamma}\mathbb{E}[\Delta_k^{1+\gamma}\mid {\cal{F}}^{\xi}_{\tau_k}]+
    C\mathbb{E}[\Delta_k^{0.75(1+\gamma)}\mid {\cal{F}}^{\xi}_{\tau_k}].
\end{align*}

\qed

\bigskip


\bibliography{mybib}

\end{document}

